\DeclareMathAlphabet{\mathpzc}{OT1}{pzc}{m}{it}
\newtheorem{theorem}{Theorem}[section]
\newtheorem*{claim*}{Claim}
\newtheorem{claim}{Claim}
\newtheorem{lemma}[theorem]{Lemma}
\newtheorem{lem}[theorem]{Lemma}
\newtheorem{corollary}[theorem]{Corollary}
\newtheorem{cor}[theorem]{Corollary}
\newtheorem{prop}[theorem]{Proposition}
\theoremstyle{definition}
\newtheorem{Def}[theorem]{Definition}
\theoremstyle{remark}
\newtheorem{remark}[theorem]{Remark}
\numberwithin{equation}{section}
\newcommand{\op}{\operatorname}
\newcommand{\be}{\begin{equation}}
\newcommand{\ee}{\end{equation}}
\newcommand{\Ga}{\Gamma}
\newcommand{\R}{\mathbb R}
\newcommand{\Z}{\mathbb Z}
\newcommand{\N}{\mathbb N}
\newcommand{\ga}{\gamma}
\newcommand{\la}{\lambda}
\newcommand{\La}{\Lambda}
\newcommand{\inte}{\op{int}}
\newcommand{\ba}{\backslash}
\newcommand{\cal}{\mathcal}
\newcommand{\br}{\mathbb R}
\newcommand{\PSL}{\op{PSL}}
\newcommand{\F}{\cal F}
\newcommand{\G}{\Gamma}
\renewcommand{\frak}{\mathfrak}
\newcommand{\e}{\varepsilon}
\renewcommand{\L}{\mathcal L}
\newcommand{\fa}{\mathfrak a}
\renewcommand{\i}{\op{i}}
\newcommand{\C}{\cal C}
\newcommand{\id}{\op{id}}
\renewcommand{\P}{\mathbb{P}}
\newcommand{\fg}{\frak g}
\renewcommand{\epsilon}{\e}
\newcommand{\SL}{\op{SL}}
\def\g{\gamma}
\newcommand{\fh}{\mathfrak{h}}
\renewcommand{\L}{\mathcal L}
\newcommand{\W}{\mathcal W}
\newcommand{\Hom}{\op{Hom}}
\renewcommand{\id}{\op{id}}
\newcommand{\Jor}{\op{Jor}}
\title[Limit cones and growth indicators]{Deformations of Anosov subgroups:\\ Limit cones and growth indicators}
\author{Subhadip Dey}
\address{Max Planck Institute for Mathematics in the Sciences, Leipzig,
    Germany\linebreak \indent
    {\rm\em Current address:} School of Mathematics, Tata Institute of Fundamental Research, Mumbai, India}
\email{subhadip@math.tifr.res.in}
\author{Hee Oh}
\address{Department of Mathematics, Yale University, New Haven, CT}
\email{hee.oh@yale.edu}
\begin{document}

\begin{abstract} 

Let $G$ be a connected semisimple real algebraic group. We prove that limit cones vary continuously under deformations of Anosov subgroups of 
$G$ under a certain convexity assumption, which turns out to be necessary. We apply this result to the notion of sharpness for the action of a discrete subgroup on a non-Riemannian homogeneous space. Finally, we show that, within the space of Anosov representations, the growth indicator, the critical exponents, and the Hausdorff dimension of limit sets (with respect to an appropriate non-Riemannian metric) all vary continuously.

\end{abstract}

\maketitle

\tableofcontents

\section{Introduction}
Let $G$ be a connected semisimple real algebraic group. 
The limit cone of a discrete subgroup of $G$ is
a fundamental object in the study of their asymptotic properties. 
 Let $\fa$ be a Cartan subalgebra of the Lie algebra $\fg$ of $G$, and 
let $\fa^+\subset \fa$ be a closed positive Weyl chamber.
Denote by $$\mu:G\to \fa^+$$
the Cartan projection (see \eqref{car}).
For any closed subgroup $\Ga$ of $G$, the limit cone $\L_\Ga$ is defined as the asymptotic cone of the Cartan projections of its elements:
$$\L_\Ga=\{\lim_{i\to \infty}  t_i \mu(\ga_i)\in \fa^+: t_i\to 0, \ga_i\in \Ga\} .$$
This notion was introduced by Benoist \cite{Be}.  
A natural question is whether the limit cones of discrete subgroups vary continuously in the deformation space. The main goal of this paper is
to answer this question affirmatively for Anosov subgroups, under a suitable convexity assumption.
This convexity assumption is, in fact, necessary. We  explain an application to the notion of sharpness introduced by Kassel and Kobayashi \cite{KK}. We also show that, along deformations
of Anosov subgroups, the following quantities vary continuously:
\begin{itemize}
    \item the growth indicator;
     \item the critical exponents;
    \item the Hausdorff dimension of the limit set, with respect to an appropriate non-Riemannian metric.
\end{itemize}

Anosov subgroups of $G$ are  regarded as the higher-rank analogues of convex cocompact subgroups in rank-one Lie groups. They play a central role in higher Teichm\"uller theory (\cite{La}, \cite{GW}). Let $\Pi$ be the set of all simple roots of $G$
with respect to $\fa^+$.  For any non-empty subset $\theta$ of $\Pi$,
 a finitely generated subgroup $\Ga<G$ is called $\theta$-Anosov if
there exists a constant $C>1$ such that, for all $\alpha\in \theta$ and $\ga\in \Ga$, 
$$\alpha(\mu(\ga))\ge C^{-1}|\ga| -C $$
where $|\ga |$ denotes the word  length of $\ga$  with respect to some fixed finite generating set of $\G$. When $\theta=\Pi$,
we speak of a Borel-Anosov subgroup or a $\Pi$-Anosov subgroup.
All Anosov subgroups in our paper are assumed to be non-elementary, i.e., not virtually cyclic.

\subsection*{Limit cones vary continuously} For a finitely generated subgroup $\Ga<G$, let $$\op{Hom}(\Ga, G)$$ be the space of all homomorphisms of $\Ga$ to $G$. Denote by $\mathsf C(\fa^+)$  the space of closed cones in the positive Weyl chamber $\fa^+$. Both spaces carry natural topologies (see \eqref{top} and \eqref{top2}).
Let $\id_\Ga\in \Hom(\Ga, G)$ be the inclusion map  from $\Ga$ to $G$.

Let  $\i:\Pi\to \Pi$ be the opposition involution of $G$ (see \eqref{i}).  For $\theta\subset \Pi$, define $\W_\theta$ to be the subgroup of the Weyl group consisting of those elements that fix, pointwise, the subspace $\fa_\theta =\bigcap_{\alpha\in \Pi-\theta}\op{ker}\alpha$.
A closed subgroup $\Ga<G$ is $\theta$-convex
if the orbit $\cal W_{\theta\cup\i(\theta)} \L_\Ga$ under the adjoint action is convex in $\fa$. 

\begin{theorem}\label{m1} Let $G$ be a connected semisimple real algebraic group.
Suppose that $\Ga<G$ is a $\theta$-Anosov and $\theta$-convex subgroup for some $\theta\subset \Pi$, and that its Zariski closure is reductive\footnote{A non-trivial algebraic subgroup of $G$ is reductive if its unipotent radical is trivial.}.
 Then  the map $$\sigma \mapsto
\L_{\sigma(\Ga)} \in \mathsf C(\fa^+)$$ is continuous at  $\id_\Ga \in \op{Hom}(\Ga, G)$.

Moreover, continuity can fail without $\theta$-convexity: there exists a $\theta$-Anosov subgroup $\Ga<G$ (for some $\theta\subset \Pi$) whose Zariski closure  is reductive but not $\theta$-convex, for which the map $\sigma\mapsto \L_{\sigma(\Ga)} $ is not continuous at $\id_\Ga$.
\end{theorem}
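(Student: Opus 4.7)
The plan is to prove continuity by separately establishing lower and upper semicontinuity of $\sigma\mapsto \L_{\sigma(\Ga)}$ at $\id_\Ga$, and then to construct a counterexample when the $\theta$-convexity hypothesis is dropped. For lower semicontinuity, given $v\in \L_\Ga$, I would invoke Benoist's characterization of the limit cone of a subgroup with reductive Zariski closure as the closure of the rays $\R_{\ge 0}\cdot J(\ga)$ generated by the Jordan projections $J(\ga)$. Pick $\ga_k\in \Ga$ with $J(\ga_k)/\norm{J(\ga_k)}\to v/\norm{v}$. For each fixed $\ga$, the Jordan projection $J(\sigma(\ga))$ depends continuously on $\sigma\in \Hom(\Ga,G)$, and the reductive property of the Zariski closure persists near $\id_\Ga$ among Anosov representations (openness of the Anosov locus). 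A diagonal extraction then yields $v_n\in \L_{\sigma_n(\Ga)}$ converging to $v$ along any $\sigma_n\to \id_\Ga$.

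Upper semicontinuity is the more delicate step and is where $\theta$-convexity enters. Suppose $\sigma_n\to\id_\Ga$ and $v_n\in \L_{\sigma_n(\Ga)}$ with $v_n\to v$; I must show $v\in \L_\Ga$. Extracting diagonally, write $v=\lim t_n \mu(\sigma_n(\ga_n))$ for some $\ga_n\in \Ga$ and $t_n>0$. By stability of $\theta$-Anosov representations, the $\sigma_n$ are uniformly $\theta$-Anosov for $n$ large; their boundary maps $\xi_{\sigma_n}\colon \p\Ga\to G/P_\theta$ exist and converge uniformly to the boundary map of $\id_\Ga$ on compact subsets of $\p\Ga$. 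Combined with higher-rank Morse-type estimates controlling the $\fa_\theta$-component of the Cartan projection in terms of the attracting pair of the boundary map, this forces the sequence $p_\theta(t_n\mu(\sigma_n(\ga_n)))$ to accumulate inside $p_\theta(\L_\Ga)$, so $p_\theta(v)\in p_\theta(\L_\Ga)$. The convexity assumption then enters decisively: since $\W_{\theta\cup\i(\theta)}\L_\Ga$ is a convex subset of $\fa$ and equals the union of Weyl translates of $\L_\Ga$ stabilizing $\fa_\theta$, a geometric argument combining Benoist's convexity for the ambient Zariski closure with this explicit $\fa_\theta$-level control lifts the accumulation information to a point of $\W_{\theta\cup\i(\theta)}\L_\Ga\cap \fa^+=\L_\Ga$, giving $v\in\L_\Ga$.

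For the counterexample showing necessity, I would construct a $\theta$-Anosov subgroup whose reductive Zariski closure $H\subsetneq G$ has a ``narrow'' limit cone, specifically chosen so that $\W_{\theta\cup\i(\theta)}\L_\Ga$ fails to be convex in $\fa$. A natural prototype is a convex cocompact Fuchsian or surface subgroup embedded into $G$ through a proper reductive $H$ whose Cartan subalgebra sits in $\fa$ along a subspace meeting $\fa^+$ in a single ray not aligned with any wall. Bending the inclusion in directions transverse to $H$ produces deformations $\sigma_\e$ whose Jordan elements acquire new components outside $\fa_H$, so that $\L_{\sigma_\e(\Ga)}$ contains directions bounded away from $\L_\Ga$, obstructing upper semicontinuity at $\id_\Ga$.

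The main obstacle is the upper semicontinuity in the convex case: a priori, small deformations could create genuinely new limit directions, and ruling this out requires the convexity hypothesis to interact non-trivially with the boundary-map continuity and Benoist's Jordan-data description of $\L_\Ga$. The technical heart of the proof, I expect, is showing that convexity rigidly determines $\L_\Ga$ from its $\fa_\theta$-projection, so that the Anosov-level control in $\fa_\theta$ under deformations lifts to control of $\L_{\sigma(\Ga)}$ in all of $\fa^+$.
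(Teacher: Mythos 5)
Your lower semicontinuity argument is essentially the paper's: Benoist's theorem gives $\L_\Ga=\L_\Ga^{\Jor}$ when the Zariski closure is reductive, Jordan projections vary continuously, and $\L^{\Jor}_{\sigma(\Ga)}\subset\L_{\sigma(\Ga)}$ always holds, so a diagonal extraction works. (You do not need, and should not claim, that reductiveness of the Zariski closure persists under deformation; it is only used at $\id_\Ga$.)

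The upper semicontinuity step has a genuine gap. Your plan is to control only $p_\theta(\mu(\sigma_n(\ga_n)))$ via Anosov stability, conclude $p_\theta(v)\in p_\theta(\L_\Ga)$, and then ``lift'' to $v\in\L_\Ga$ using convexity of $\W_{\theta\cup\i(\theta)}\L_\Ga$. This cannot work: the $\theta$-Anosov property gives no control whatsoever on the components of $\mu(\ga)$ transverse to $\fa_\theta$ (e.g.\ an $\{\alpha_1\}$-Anosov subgroup of $\SL(3,\R)$ constrains $\alpha_1(\mu(\ga))$ but says nothing about $\alpha_2(\mu(\ga))$), and $\L_\Ga$ is \emph{not} determined by its $\fa_\theta$-projection --- the fiber $p_\theta^{-1}(p_\theta(\L_\Ga))\cap\fa^+$ is in general a strictly larger cone than $\L_\Ga$, convexity or not. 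The paper's counterexample makes this vivid: there the $\theta$-projected data can behave well while the full limit cones jump. What the paper actually does is control the \emph{full} Cartan projection in $\fa^+$: it approximates $\L_\Ga$ from outside by $\theta\cup\i(\theta)$-admissible closed cones $\C\subset\C'$ in $\fa^+$ (Lemma \ref{con1}), observes that the orbit map of $\Ga$ is $(\C,D,L)$-Morse, that the orbit maps of nearby $\sigma_n$ are therefore $(\C,D{+}1,L{+}1,S)$-local Morse, and invokes the Kapovich--Leeb--Porti local-to-global principle to conclude they are globally $(\C',D',L')$-Morse; this places every $\mu(\sigma_n(\ga))$ within bounded distance of $\C'$, hence $\L_{\sigma_n(\Ga)}\subset\C'$. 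The role of $\theta$-convexity is not to ``lift'' $\fa_\theta$-information but to guarantee that $\L_\Ga$ itself is a $\theta\cup\i(\theta)$-admissible cone, so that such approximating admissible cones exist at all. Your proposal is missing this mechanism entirely.

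Your counterexample sketch is also off target. If $H$ has rank one and $\L_\Ga$ is a single ray avoiding the walls $\ker\alpha$, $\alpha\in\theta$, then $\W_\theta$ fixes that ray and $\Ga$ \emph{is} $\theta$-convex (this is exactly Lemma \ref{one} of the paper), so no counterexample can arise this way. The paper instead takes a Zariski dense Borel-Anosov subgroup of $H=\SL(3,\R)<\SL(4,\R)$, whose limit cone in $\fa^+$ is a ``folded'' non-convex union of two planar pieces, and deforms one generator to make the image Zariski dense in $G$; Benoist's convexity theorem then forces the deformed limit cones to contain convex combinations of directions near the two folds, which escape every small neighborhood of the folded cone. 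Some argument of this kind --- producing Zariski density (or otherwise convexity) after deformation and exploiting non-convexity of the original cone --- is needed; ``bending acquires new Jordan components'' does not by itself show the new limit cone leaves a neighborhood of $\L_\Ga$.
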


The continuity asserted in Theorem \ref{m1} splits into two parts:  lower semicontinuity (Proposition \ref{LC}) and upper semicontinuity (Theorem \ref{UC}). 
The reductive Zariski closure hypothesis (resp. the convexity hypothesis) is not required for the upper (resp. lower) semicontinuity. 
Many notable Anosov subgroups are known to have
reductive Zariski closure--for instance, maximal representations \cite{BIW} and Hitchin representations \cite{Sam}.

The failure of continuity in the absence of the convexity assumption is discussed in section \ref{fa}.

We show that each of the following classes satisfies the hypotheses of Theorem \ref{m1} (see section \ref{ex}):
\begin{corollary} \label{three0}
Suppose one of the following holds:
\begin{enumerate}
 \item $\op{rank} G=2$ and $\G$ is an Anosov subgroup of $ G $;
 \item $\Ga$ is a Borel-Anosov subgroup of $G$;
    \item $\G$ is a $ \theta $-Anosov subgroup of $ G $ such that $ \L_\Ga $ is a convex cone contained in $ \fa_{\theta \cup\i(\theta)}$ for some $\theta\subset \Pi$;
    \item $\G$ is a non-elementary convex cocompact subgroup of a rank-one simple algebraic subgroup of $ G $.
\end{enumerate}

If moreover $\Ga$ has reductive Zariski closure, then the map
$\sigma \mapsto \L_{\sigma(\Ga)}$
is continuous at $ \id_\Ga \in \Hom(\Ga, G) $.
\end{corollary}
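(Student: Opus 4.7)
The plan is to verify, in each of the four cases, the $\theta$-convexity hypothesis of Theorem \ref{m1}; the continuity conclusion will then follow directly. Two general facts will be used throughout: by Benoist \cite{Be}, $\L_\Ga$ is always a convex subcone of $\fa^+$; and since $\mu(\ga^{-1})=\i\mu(\ga)$, $\L_\Ga$ is $\i$-invariant and any $\theta$-Anosov subgroup is $(\theta\cup \i(\theta))$-Anosov, so I may replace $\theta$ by $\theta\cup \i(\theta)$ without loss of generality.

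Three of the four cases should be essentially formal. For Case (2), $\theta=\Pi$ forces $\W_\Pi=\{1\}$ by the definition of $\W_\theta$, so $\W_{\theta\cup\i(\theta)}\L_\Ga=\L_\Ga$ is convex. For Case (3), $\W_{\theta\cup\i(\theta)}$ pointwise fixes $\fa_{\theta\cup\i(\theta)}\supseteq \L_\Ga$, so again $\W_{\theta\cup\i(\theta)}\L_\Ga=\L_\Ga$, convex by hypothesis. For Case (4), rank-one convex cocompactness of $\Ga$ inside a simple subgroup $H<G$ concentrates the Cartan projections of $\Ga$ along a single ray $\R_{\geq 0}v\subset \fa^+$, so $\L_\Ga=\R_{\geq 0}v$; taking $\theta:=\{\alpha\in \Pi:\alpha(v)>0\}$ yields both the $\theta$-Anosov property in $G$ (from the linear Cartan growth in $H$ and the quasi-isometric comparison with the $G$-Cartan projection on $H$) and the inclusion $\L_\Ga\subset \fa_\theta\subset \fa_{\theta\cup\i(\theta)}$, reducing to Case (3).

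The substantive work is in Case (1). After replacing $\theta$ by $\theta\cup \i(\theta)$, if $\theta=\Pi$ I apply Case (2); otherwise $\theta$ is a proper $\i$-invariant subset of $\Pi=\{\alpha_1,\alpha_2\}$. Since the opposition involution of $A_2$ swaps the two simple roots, this residual subcase can only occur for $G$ of type $B_2=C_2$ or $G_2$, where $\i=\id$ and $\theta=\{\alpha\}$ is a single root. Writing $\beta$ for the other simple root, $\W_\theta=\langle s_\beta\rangle$, and I must show $\L_\Ga\cup s_\beta\L_\Ga$ is convex. If $\Ga$ is also $\{\beta\}$-Anosov then it is $\Pi$-Anosov and Case (2) applies; otherwise the Anosov inequality fails for $\beta$, yielding $\ga_i\in \Ga$ with $\beta(\mu(\ga_i))/|\ga_i|\to 0$ while $\alpha(\mu(\ga_i))/|\ga_i|$ stays bounded below, and a subsequential limit of $\mu(\ga_i)/\|\mu(\ga_i)\|$ is then a unit vector $v\in \L_\Ga$ with $\beta(v)=0$. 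Thus $\L_\Ga$ and $s_\beta\L_\Ga$ share an edge on $\{\beta=0\}$ and both lie in $\fa^+\cup s_\beta\fa^+$, whose angular extent is $\pi/2$ in types $B_2=C_2$ and $\pi/3$ in $G_2$, in both instances strictly less than $\pi$; the union is therefore convex.

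I expect the main obstacle to be the last step of Case (1): showing that $\L_\Ga$ actually meets the wall $\{\beta=0\}$ is the only point requiring a genuine asymptotic extraction rather than a formal set-theoretic identity. Once that is in place, the planar convexity of $\L_\Ga\cup s_\beta\L_\Ga$ is an immediate consequence of the smallness of the Weyl-chamber angles in rank two.
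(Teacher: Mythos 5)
Your overall strategy coincides with the paper's: verify $\theta$-convexity in each case and invoke Theorem \ref{m1}. Cases (3) and (4) are handled correctly and as in the paper (for (4), your $\theta=\{\alpha:\alpha(v)>0\}$ is the same as the paper's $\{\alpha:\L_\Ga\cap\ker\alpha=0\}$ since $\L_\Ga$ is a ray), and your wall-touching argument in case (1) is the contrapositive of the paper's Lemma \ref{two} (there: if $\L_\Ga\cap\ker\beta=\{0\}$ then $\mu(\Ga)$ stays in a cone transverse to $\ker\beta$, forcing $\{\beta\}$-Anosov and hence Borel-Anosov).

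There is, however, a genuine gap at the very start: the ``general fact'' that \emph{by Benoist $\L_\Ga$ is always a convex subcone of $\fa^+$} is false. Benoist's convexity theorem (Theorem \ref{be0}) requires $\Ga$ to be Zariski dense in $G$; the paper stresses that reductive Zariski closure does \emph{not} imply convexity of $\L_\Ga$, and Section \ref{fa} exhibits a $\theta$-Anosov subgroup of $\SL(4,\R)$ with reductive Zariski closure whose limit cone is non-convex (convexity only holds for $\L_{\Ga,H}$ inside the Zariski closure $H$, and the folding $\L_{\Ga,H}\mapsto\L_{\Ga,G}$ of Lemma \ref{hg} can destroy it). This false claim is load-bearing exactly where you rely on $\L_\Ga$ being convex without further hypothesis, namely cases (1) and (2). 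In case (2) it can be repaired: Borel-Anosov forces $\L_\Ga\subset\inte\fa^+\cup\{0\}$, and then Lemma \ref{convex1} (via the reductive Zariski closure, or via the Levi-factor argument of Lemmas \ref{Jordan} and \ref{convex2}) yields convexity. In case (1) that repair is unavailable, since you show precisely that $\L_\Ga$ touches the wall $\ker\beta$, so $\L_\Ga\not\subset\inte\fa^+\cup\{0\}$; the paper instead derives convexity from the connectedness of $\L_\Ga\cap\fa^1$ \cite[Proposition A.2]{DR} together with $\dim\fa=2$. Without some such input, the planar angle computation in your case (1) has nothing to apply to. A secondary, more minor issue: your enumeration of rank-two restricted root systems omits $A_1\times A_1$ and $BC_2$, for both of which $\i=\id$ and a singleton $\theta$ is possible; the argument still goes through, but for $A_1\times A_1$ the angular extent of $\fa^+\cup s_\beta\fa^+$ equals $\pi$ rather than being ``strictly less than $\pi$'' (a closed half-plane is still convex, so the conclusion survives).
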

 
For case (2), we do not need the reductive Zariski closure assumption (Proposition \ref{borel}).
This case can also be deduced from the work of Breuillard and Sert on continuity of joint spectrum \cite[Theorem 1.7]{BS}.
For a Zariski dense Borel-Anosov subgroup that is
isomorphic to the fundamental group of a closed negatively curved manifold, the same result was obtained by Sambarino via thermodynamic formalism \cite{Sam1}. 
Case (4)  was proved by Kassel \cite{Ka}.
Our proof of Theorem \ref{m1} is completely different from those in \cite{Ka}, \cite{Sam1} and \cite{BS}. It is based on the local-to-global principle of Morse quasi-geodesics due to Kapovich-Leeb-Porti \cite{KLP2}. In particular, we show that the limit cone of a small perturbation of a $\theta$-Anosov subgroup $\Ga$ is contained in any $\theta\cup\i(\theta)$-admissible cone whose interior contains $\L_\Ga-\{0\}$ (Proposition \ref{ucp}). The $\theta$-convexity assumption on $\Ga$ ensures that $\L_\Ga$ can be approximated by such admissible cones (Lemma \ref{con1}).

\medskip 
We now discuss some applications of the upper semicontinuity of limit cones (Theorem \ref{UC}).
\subsection*{Sharpness is an open condition} For a closed subgroup $H$ of $G$, a discrete subgroup $\Ga$ of $G$ is called {\it sharp} for $G/H$ if its limit cone meets that of $H$ only at zero:
$$\L_\Ga\cap \L_H=\{0\}.$$ Introduced by Kassel-Kobayashi \cite{KK}, sharpness plays a key role in the study of compact Clifford-Klein forms. If $\Ga$ is sharp for $G/H$, then it acts properly discontinuously on $G/H$ (\cite{Be1}, \cite{Ko}). Conversely, Kassel and Tholozan  recently proved that
if $\Ga$ acts properly discontinuously and cocompactly on $G/H$, then $\Ga$ is sharp for $G/H$ \cite{KT}.
 Theorem \ref{UC} immediately gives the 
following openness statement: 

\begin{cor}\label{sh}
 Let $H<G$ be a closed subgroup, and let
$\Ga<G$ be a $\theta$-Anosov and $\theta$-convex subgroup  for some $\theta\subset \Pi$. If $\Ga$ is sharp for $G/H$, then there exists an open neighborhood $\cal O$ of $\id_\Ga$ in $\Hom(\Ga, G)$
such that for all $\sigma\in \cal O$, the subgroup $\sigma(\Ga)$ is sharp for
$G/H$.
\end{cor}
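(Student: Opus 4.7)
The plan is to deduce the corollary directly from the upper semicontinuity half of Theorem~\ref{m1}; the convexity and reductive Zariski closure hypotheses enter only through that input, and what remains is a routine topological separation of the closed cones $\L_\Ga$ and $\L_H$ in $\fa^+$.

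First I would convert sharpness into a separating cone. Since $\L_\Ga,\L_H\subset \fa^+$ are closed cones with $\L_\Ga\cap \L_H=\{0\}$, their projections to the unit sphere of $\fa$ (with respect to any fixed norm) are disjoint compact subsets of that sphere, hence separated by a positive spherical distance. Fattening $\L_\Ga\cap \mathbb S$ by a small $\e$-neighborhood inside $\fa^+\cap \mathbb S$ and coning over the result yields a closed cone $C\subset \fa^+$ such that $\L_\Ga\setminus\{0\}$ lies in the interior of $C$ while $C\cap \L_H=\{0\}$.

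Next, by the upper semicontinuity half of Theorem~\ref{m1} (which, as the paper remarks, does not require the reductive Zariski closure hypothesis), there is an open neighborhood $\cal O\subset \Hom(\Ga, G)$ of $\id_\Ga$ such that
\[
\L_{\sigma(\Ga)}\subset C \quad\text{for every } \sigma\in \cal O.
\]
Intersecting with $\L_H$ and using the previous step gives $\L_{\sigma(\Ga)}\cap \L_H \subset C\cap \L_H=\{0\}$, which is exactly sharpness of $\sigma(\Ga)$ for $G/H$.

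The only subtlety worth flagging is that the quantitative form of upper semicontinuity used in the paper is phrased in terms of $\theta\cup \i(\theta)$-admissible cones whose interior contains $\L_\Ga\setminus\{0\}$ (Proposition~\ref{ucp}); one therefore should check that the cone $C$ above can be chosen admissible. Since admissibility is an open condition, shrinking $\e$ in the construction suffices, so no further appeal to the Anosov or convexity hypotheses is needed beyond what Theorem~\ref{m1} already supplies.
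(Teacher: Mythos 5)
Your argument is correct and is essentially the paper's own proof (Proposition~\ref{sharp}): one separates $\L_H$ from $\L_\Ga$ by a $\theta\cup\i(\theta)$-admissible closed cone containing $\L_\Ga-\{0\}$ in its interior --- this is exactly Lemma~\ref{con1} applied with $\cal D=\L_\Ga$ and $\C_0=\fa^+-\L_H$, using that $\theta$-convexity together with the Anosov property makes $\L_\Ga$ itself admissible --- and then applies Proposition~\ref{ucp}. The only imprecision is the phrase ``admissibility is an open condition'': a small perturbation of an admissible cone need not remain admissible (the $\W_{\theta\cup\i(\theta)}$-convexity can be destroyed), and the correct justification that your fattened cone can be taken admissible is precisely Lemma~\ref{con1}, whose proof intersects the angular fattening with suitable $\i$-invariant, $\W_{\theta\cup\i(\theta)}$-invariant half-space cones rather than merely shrinking $\e$.
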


When $H$ is reductive of co-rank one in $G$, this corollary
 was proved in \cite[Corollary 6.3]{KT} without the convexity assumption.
We refer to \Cref{sharp} (and Corollary \ref{sharp2}) for a more general statement which in particular yields another proof of  \cite[Corollary 6.3]{KT}.

\subsection*{Growth indicators vary continuously} Our proof of Theorem \ref{m1} naturally extends to the study of continuity of
$\theta$-limit cones (Theorem \ref{tl}).
Using this, we show that
$\theta$-growth indicators vary continuously in the deformation space of a $\theta$-Anosov subgroup.  This is a higher rank analog of the classical result that the Riemannian critical exponents vary continuously (even analytically) in the space of convex cocompact representations of a finitely generated group (\cite{Ru}, \cite{AR}).

In the rest of the introduction, we fix a non-empty subset $\theta\subset \Pi$. Let $p_\theta:\fa\to \fa_\theta $ be the canonical projection invariant under $\W_\theta$. Consider the $\theta$-Cartan projection $$\mu_\theta=p_\theta\circ\mu:G\to \fa_\theta^+$$ where  $\fa_\theta^+=\fa_\theta\cap \fa^+$. For a closed subgroup $\Ga<G$, its $\theta$-limit cone $\L_\Ga^\theta $ is defined as
the asymptotic cone of $\mu_\theta(\Ga)$.
 When $\Ga$ is $\theta$-discrete, that is, $\mu_\theta |_{\Ga}$ is proper,
 the growth indicator $\psi^\theta_\Ga:\fa_\theta^+\to \br\cup\{-\infty\}$ is well-defined (see  Definition \ref{growth}). It equals $-\infty$ outside $\L_\Ga^\theta$, and is
 non-negative on $\L_\Ga^\theta$: specifically, for $v\in \L^\theta_\Ga$,
\be\label{ttt0} \psi_\Ga^\theta(v)=\|v\|\inf_{v\in \cal C}\limsup_{T\to\infty} \frac{1}{T} {\log \#\{\ga\in \Ga: \mu_\theta(\ga)\in \cal C, \|\mu_\theta(\ga)\|\le T\}}\ee 
where the infimum runs over all open cones $\C\subset \fa_\theta^+$ containing $v$ and $\|\cdot\|$ is a norm on $\fa_\theta$.  Any discrete subgroup of $G$ is $\Pi$-discrete and the function $\psi_\Ga^\Pi$ was
 introduced by Quint \cite{Q2}. This notion was extended to general $\theta$ in \cite{KOW}. 
 The growth indicator $\psi_\Ga^\theta$ records the exponential growth rate of $\Ga$ in all directions of $\fa_\theta^+$, generalizing the classical Riemannian critical exponent in rank one. Denote by $\inte\L_\Ga^\theta$ the relative interior of $\L_\Ga^\theta$ in $\fa_\theta$. When $\Ga$ is Zariski dense in $G$,  the cone $\L_\Ga^\theta$ is convex with non-empty interior \cite{Be}. All $\theta$-Anosov subgroups are $\theta$-discrete.

\begin{theorem}\label{iuc} Let $\G$ be a Zariski dense $\theta$-Anosov subgroup of $G$.
If $\sigma_n\to \id_\Ga$ in $\Hom(\Ga, G)$, then 
$$\lim_{n\to \infty} \psi^\theta_{\sigma_n(\Ga)}(v) = \psi^\theta_\Ga (v) \quad\text{ for all $v\in \inte \L_\Ga^{\theta}$} $$
and the convergence is uniform on compact subsets of $\inte \L_{\Ga}^\theta$.
\end{theorem}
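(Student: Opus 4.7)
The plan is to bootstrap from the continuity of $\theta$-limit cones (Theorem \ref{tl}, the $\theta$-refinement of Theorem \ref{m1} mentioned above) to continuity of growth indicators via the duality between $\psi_\Ga^\theta$ and the critical exponents of linear functionals. For $\phi\in \fa_\theta^*$ positive on $\L_\Ga^\theta-\{0\}$, set
$$\delta_\phi(\Ga)=\limsup_{T\to\infty} \tfrac{1}{T}\log \#\{\g\in\Ga : \phi(\mu_\theta(\g))\le T\}.$$
For a Zariski dense $\theta$-Anosov $\Ga$, one has the standard duality $\delta_\phi(\Ga)=\sup_{v\in \L_\Ga^\theta-\{0\}} \psi_\Ga^\theta(v)/\phi(v)$ and, dually, $\psi_\Ga^\theta(v)=\inf\{\phi(v): \delta_\phi(\Ga)\le 1\}$ on the interior of $\L_\Ga^\theta$. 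Both $\theta$-Anosov and Zariski density being open conditions in $\Hom(\Ga,G)$, these characterizations apply to $\sigma_n(\Ga)$ for $n$ large.

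For upper semicontinuity at $v\in \inte \L_\Ga^\theta$, I would pick a supporting linear functional $\phi$ with $\phi(v)=\psi_\Ga^\theta(v)$ and $\phi\ge \psi_\Ga^\theta$ on $\L_\Ga^\theta$, so that $\delta_\phi(\Ga)=1$. Interiority of $v$ makes $\phi$ strictly positive on $\L_\Ga^\theta-\{0\}$, hence by Theorem \ref{tl} also on $\L_{\sigma_n(\Ga)}^\theta-\{0\}$ for all large $n$. A Poincar\'e-series comparison, using the uniform Anosov linear bound $\alpha(\mu(\sigma_n(\g)))\ge C^{-1}|\g|-C$ valid on a fixed neighborhood of $\id_\Ga$ by openness of the Anosov condition, then yields $\delta_\phi(\sigma_n(\Ga))\to \delta_\phi(\Ga)=1$; duality gives $\psi_{\sigma_n(\Ga)}^\theta(v)\le \phi(v)\cdot \delta_\phi(\sigma_n(\Ga))\to \psi_\Ga^\theta(v)$.

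For lower semicontinuity, fix $\e>0$ and pick an open cone $\C\ni v$ with $\C\subset \inte \L_\Ga^\theta$ such that
$$\limsup_{T\to\infty} \tfrac{1}{T}\log \#\{\g\in \Ga : \mu_\theta(\g)\in \C,\ \|\mu_\theta(\g)\|\le T\}\ge \psi_\Ga^\theta(v)/\|v\|-\e.$$
The key input is a uniform sublinear comparison $\|\mu_\theta(\sigma_n(\g))-\mu_\theta(\g)\|\le \e|\g|$ for all large $n$ and all $\g\in\Ga$, which I would obtain via a Morse local-to-global stability argument analogous to the one driving Theorem \ref{m1}, combined with the Anosov comparison $\|\mu_\theta(\g)\|\asymp |\g|$. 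Granting this, each counted $\g$ satisfies $\mu_\theta(\sigma_n(\g))\in \C'$ for a slightly enlarged cone $\C'\ni v$ with $\|\mu_\theta(\sigma_n(\g))\|\le (1+O(\e))T$, whence $\psi_{\sigma_n(\Ga)}^\theta(v)\ge \psi_\Ga^\theta(v)-O(\e)$; letting $\e\to 0$ gives pointwise lower semicontinuity.

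Uniform convergence on compact subsets of $\inte \L_\Ga^\theta$ then follows from the classical convex-analytic fact that pointwise convergence of concave functions on an open convex set is automatically uniform on compacta; concavity of $\psi_\Ga^\theta$ and $\psi_{\sigma_n(\Ga)}^\theta$ is guaranteed by Zariski density via \cite{Be}, together with the openness of Zariski density. I expect the main obstacle to be establishing the uniform sublinear comparison of Cartan projections simultaneously for all $\g\in\Ga$, which requires the full Morse local-to-global stability of Anosov representations rather than just pointwise continuity $\mu(\sigma_n(\g))\to \mu(\g)$ at each fixed $\g$.
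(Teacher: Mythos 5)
Your overall architecture (tangent/supporting forms plus duality for the upper bound, direct orbit counting in cones for the lower bound, concavity for uniformity on compacta) is reasonable, and the final step — pointwise convergence of finite concave functions on an open convex set is uniform on compacta — is correct and is essentially how the paper concludes as well. But the proof as written has a genuine gap at its central step, and you have in fact located it yourself: the uniform sublinear comparison $\|\mu_\theta(\sigma_n(\g))-\mu_\theta(\g)\|\le \e_n|\g|$ with $\e_n\to 0$, simultaneously over all $\g\in\Ga$. This is not a routine consequence of the Morse local-to-global principle "analogous to the one driving Theorem \ref{m1}". What that principle yields (Proposition \ref{ucp}) is that $\mu(\sigma_n(\g))$ stays within a bounded distance of an admissible cone slightly larger than $\L_\Ga$; it gives no comparison between $\mu(\sigma_n(\g))$ and $\mu(\g)$ for the \emph{same} element $\g$. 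A naive telescoping along a geodesic word fails because the per-letter error $d(s\,h\,o,\sigma_n(s)h\,o)$ is not controlled by $d(so,\sigma_n(s)o)$ when $h\,o$ is far from $o$. Such uniform comparisons do exist in the literature (they go back to Kassel's work on deformations of proper actions and its Anosov generalizations), but they are substantial theorems, not something you can wave at. Moreover, your upper-semicontinuity half quietly relies on the same unproved comparison: the "Poincar\'e-series comparison" needs $\phi(\mu(\sigma_n(\g)))=(1+o(1))\,\phi(\mu(\g))$ uniformly in $\g$; the uniform Anosov lower bound $\alpha(\mu(\sigma_n(\g)))\ge C^{-1}|\g|-C$ alone only controls norms up to multiplicative constants and cannot force $\delta_\phi(\sigma_n(\Ga))\to\delta_\phi(\Ga)$ (critical exponents of discrete groups do not in general vary continuously).

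For contrast, the paper routes around this entirely. The upper bound uses the tangent form $\phi_v$ at $v$ (which is strictly positive on $\L_\Ga^\theta-\{0\}$ by vertical tangency) together with Theorem \ref{bcls}, a strengthened Bridgeman--Canary--Labourie--Sambarino statement giving continuity (indeed analyticity) of $\sigma\mapsto\delta_{\phi,\sigma(\Ga)}$ — a thermodynamic-formalism input, not an elementary series comparison. The lower bound is proved by a conformal-measure argument in the spirit of Sullivan and McMullen: one takes the tangent forms $\phi_n$ of $\psi^\theta_{\sigma_n(\Ga)}$ at $v$ (which exist since $v\in\inte\L^\theta_{\sigma_n(\Ga)}$ by Corollary \ref{co}) and the associated conformal measures $\nu_n$ on $\La^\theta_{\sigma_n(\Ga)}$, passes to a weak-* limit supported on $\La_\Ga^\theta$ using the continuity of limit sets, and invokes the classification of conformal measures (Theorem \ref{larger}) to conclude that the limit form dominates $\psi_\Ga^\theta$, whence $\lim\phi_n(v)\ge\psi_\Ga^\theta(v)$. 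If you want to keep your counting approach, you must either prove the uniform sublinear comparison of Cartan projections or import it with a precise citation; as it stands the argument is incomplete.
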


For a $\theta$-discrete subgroup $\Ga<G$, the following critical exponent is well-defined:
$$ \delta_{\Ga}^{\theta}\coloneqq\limsup_{T\to \infty}\frac{1}{T}{\log \#\{\ga\in \Ga: \|\mu_\theta(\ga)\|\le T\}}.
$$
If $\Ga$ is Zariski dense,
 then there exists a unique maximal growth direction $ u_\Ga^\theta\in \fa_\theta^+$ such that
$\delta_\Ga^\theta=\psi_\Ga^\theta(u_\Ga^\theta)=\max_{v\in \fa_\theta, \|v\|=1}\psi_\Ga^\theta(v) $ (\cite{Q2}, \cite{KOW}).
\begin{cor}\label{us}
     If $\G<G$ is a Zariski dense $\theta$-Anosov subgroup, then
 the maps $$\sigma\mapsto \delta_{\sigma(\Ga)}^{\theta}   \quad \text{ and }\quad \sigma \mapsto u_{\sigma(\Ga)}^\theta $$ are continuous at $\id_\Ga\in \Hom(\Ga, G)$.  \end{cor}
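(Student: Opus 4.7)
The plan is to deduce Corollary \ref{us} from Theorem \ref{iuc} together with the $\theta$-version of limit-cone continuity (Theorem \ref{tl}) and the uniqueness of the maximal growth direction. Since the $\theta$-Anosov property is open and Zariski density is an open condition at $\id_\Ga$, for any sequence $\sigma_n \to \id_\Ga$ in $\Hom(\Ga, G)$ I may assume that each $\sigma_n(\Ga)$ is a Zariski dense $\theta$-Anosov subgroup, so that the growth indicator $\psi_n := \psi^\theta_{\sigma_n(\Ga)}$, the critical exponent $\delta_n := \delta^\theta_{\sigma_n(\Ga)}$, and the maximal unit direction $u_n := u^\theta_{\sigma_n(\Ga)}$ are all well-defined. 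Write $\psi$, $\delta$, $u$ for the corresponding objects attached to $\Ga$, and recall that $u \in \inte \L^\theta_\Ga$.

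For the lower bound $\liminf_n \delta_n \ge \delta$, continuity of $\theta$-limit cones (Theorem \ref{tl}) places $u$ inside $\L^\theta_{\sigma_n(\Ga)}$ for all large $n$, while Theorem \ref{iuc} yields $\psi_n(u) \to \psi(u) = \delta$; since $\|u\|=1$ we have $\delta_n \ge \psi_n(u)$, and the bound follows.

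For the upper bound, I extract a subsequence along which $\delta_n \to \limsup_n \delta_n$ and $u_n \to u^*$ for some unit vector $u^* \in \L^\theta_\Ga$ (the inclusion uses the upper semicontinuity in Theorem \ref{tl}). The subtlety is that $u^*$ may lie in $\p \L^\theta_\Ga$, where Theorem \ref{iuc} provides no direct information. The key device is to interpolate through an auxiliary direction $v \in \inte \L^\theta_\Ga$ with $\|v\|=1$: for every $t \in [0,1)$, convexity of $\L^\theta_\Ga$ places $tu^* + (1-t)v$ in $\inte \L^\theta_\Ga$, and for large $n$ the perturbed points $tu_n + (1-t)v$ sit in a fixed compact neighborhood of $tu^* + (1-t)v$ inside $\inte \L^\theta_\Ga$. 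Applying Theorem \ref{iuc} together with the concavity of $\psi_n$ on $\L^\theta_{\sigma_n(\Ga)}$ gives
\[
\psi\bigl(tu^* + (1-t)v\bigr) \;=\; \lim_{n\to\infty}\psi_n\bigl(tu_n + (1-t)v\bigr) \;\ge\; t\,\limsup_n \delta_n \;+\; (1-t)\,\psi(v).
\]
Letting $t \to 1^-$ and using the upper semicontinuity of $\psi$ at $u^*$ yields $\psi(u^*) \ge \limsup_n \delta_n$. Since $\psi \le \delta$ on all unit vectors of $\L^\theta_\Ga$, this forces $\limsup_n \delta_n \le \delta$; combined with the lower bound, $\delta_n \to \delta$ and $\psi(u^*) = \delta$. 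By the uniqueness of $u$ as a unit maximizer of $\psi$, we get $u^* = u$; as every subsequential limit of $(u_n)$ on the compact unit sphere must equal $u$, the full sequence satisfies $u_n \to u$.

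The hard part will be the upper bound: Theorem \ref{iuc} only delivers uniform convergence on compact subsets of $\inte \L^\theta_\Ga$, so a priori there is no control on $\psi_n(u_n)$ when $u_n$ accumulates on $\p \L^\theta_\Ga$. The concavity-based interpolation through a strictly interior reference direction $v$ is designed precisely to pull the evaluation back into the regime where Theorem \ref{iuc} applies, and is the crux of the argument.
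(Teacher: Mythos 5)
Your proof is correct, and it organizes the argument differently from the paper. The paper works with the maximizers first: it fixes a compact neighborhood $B$ of $u_\Ga^\theta$ in $\inte\L_\Ga^\theta\cap\fa_\theta^1$, uses the uniform convergence in Theorem \ref{iuc} to show that the maximum of $\psi^\theta_{\sigma_n(\Ga)}$ over $B$ is attained in the interior of $B$ with a definite gap over $\partial B$, and then shows the constrained maximizer must coincide with the global one: if $u^\theta_{\sigma_n(\Ga)}$ lay outside $B$, renormalizing the segment joining the two candidates back to the unit sphere would produce points of $B$ near the constrained maximizer with strictly larger $\psi^\theta_{\sigma_n(\Ga)}$-value, by concavity and homogeneity. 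This traps $u^\theta_{\sigma_n(\Ga)}$ in $B$ for large $n$, and convergence of $\delta^\theta_{\sigma_n(\Ga)}$ then follows from uniform convergence on $B$. You instead prove $\delta^\theta_{\sigma_n(\Ga)}\to\delta^\theta_\Ga$ first, splitting into a lower bound (evaluation at the interior direction $u_\Ga^\theta$) and an upper bound in which the possible accumulation of $u^\theta_{\sigma_n(\Ga)}$ on $\partial\L^\theta_\Ga$ is neutralized by interpolating toward a fixed interior direction $v$ and letting $t\to1^-$ against the upper semicontinuity of $\psi^\theta_\Ga$; convergence of the maximizers then falls out of uniqueness. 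Both arguments rest on the same pillars --- Theorem \ref{iuc}, concavity of the growth indicators, and uniqueness of the maximal growth direction --- so neither is more general, but yours isolates the boundary difficulty explicitly and avoids the strict-inequality renormalization step, at the cost of an extra subsequence extraction. One small point of hygiene: as written you identify $u^*=u_\Ga^\theta$ only along the subsequence realizing $\limsup_n\delta^\theta_{\sigma_n(\Ga)}$; to conclude $u^\theta_{\sigma_n(\Ga)}\to u_\Ga^\theta$ for the full sequence you should rerun the interpolation for an arbitrary convergent subsequence of the maximizers once $\delta^\theta_{\sigma_n(\Ga)}\to\delta^\theta_\Ga$ is known. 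This is immediate, but it should be said.
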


For a Borel-Anosov subgroup $\Ga<G$, isomorphic to the fundamental group of a closed negatively curved manifold, Theorem \ref{iuc} and Corollary \ref{us} were  proved by Sambarino \cite{Sam1}.  Unlike his proof,
our proof uses  conformal measure theory, inspired by the work of Sullivan \cite{Su} and McMullen \cite{Mc}.

\subsection*{Analyticity of Hausdorff dimension}
 Let $\fa_\theta^*$ be the space of linear forms on $\fa_\theta$;
equivalently, the space of linear forms on $\fa$ that are $p_\theta$-invariant. Let $\Ga$ be a $\theta$-Anosov subgroup and let $\La_\Ga^\theta$ be its $\theta$-limit set (see Definition \ref{tl}).
Given $\phi\in \fa_\theta^*$ with $\phi>0$ on $\L^\theta_\Ga-\{0\}$,
define the premetric $d_\phi(\xi, \eta)=e^{-\phi(\cal G(\xi,\eta))}$ for any
$\xi\ne \eta$ in $\La_\G^\theta$, where $\cal G$ denotes the $\fa$-valued Gromov product (see \cite[Section 5]{DKO}).
Let $\dim_\phi(\La_\G^\theta)$ denote the Hausdorff dimension of $\La_\G^\theta$ with respect to $d_\phi$. 
Using the work of Dey-Kim-Oh \cite[Corollary 9.9]{DKO} and  a strengthened version of the Bridgeman-Canary-Labourie-Sambarino theorem (Theorem \ref{bcls}), we obtain:

\begin{cor}
    \label{dko}
    Let $\Ga$ be a $\theta$-Anosov subgroup with $\L_\Ga^\theta$ convex (e.g., $\Ga$ is Zariski dense). Let $\phi \in \fa_{\theta}^*$ be positive on $\L_{\Ga}^{\theta}-\{0\}$.  If $D$ is an analytic family of $\theta$-Anosov representations in
    $\Hom(\Ga, G)$, then $$\sigma\mapsto \dim_\phi (\La^{\theta}_{\sigma(\Ga)} )$$ is analytic in $D$.
\end{cor}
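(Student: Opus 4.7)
The plan is to combine two ingredients: identify the Hausdorff dimension with a critical exponent using the Dey–Kim–Oh formula, and then appeal to the strengthened Bridgeman–Canary–Labourie–Sambarino theorem (Theorem \ref{bcls}) to upgrade continuity of this critical exponent to analyticity along the family $D$.

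First, I would invoke \cite[Corollary 9.9]{DKO}: under the hypotheses that $\Ga$ is $\theta$-Anosov, $\L_\Ga^\theta$ is convex, and $\phi \in \fa_\theta^*$ is positive on $\L_\Ga^\theta - \{0\}$, one obtains an identity of the form
$$\dim_\phi(\La_\G^\theta) = \delta_\phi(\Ga),$$
where $\delta_\phi(\Ga)$ is the $\phi$-critical exponent
$$\delta_\phi(\Ga) = \limsup_{T \to \infty} \frac{1}{T} \log \#\{\ga \in \Ga : \phi(\mu_\theta(\ga)) \le T\}.$$
To apply this for every $\sigma \in D$, I need to know that the DKO hypotheses persist: that $\L_{\sigma(\Ga)}^\theta$ remains convex and that $\phi$ remains strictly positive on it. Convexity is preserved along $D$ if $\Ga$ is Zariski dense, since Zariski density is an open condition and Benoist's theorem then forces convexity with nonempty interior; more generally, one can shrink $D$ to a subfamily where this holds. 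Positivity of $\phi$ on the limit cone is preserved by upper semicontinuity of the $\theta$-limit cone along the deformation (the $\theta$-analog of Theorem \ref{UC}, cf.\ Theorem \ref{tl}): since $\phi$ is strictly positive on the compact cross-section of $\L_\Ga^\theta - \{0\}$, the set of $\sigma$ for which $\L_{\sigma(\Ga)}^\theta \subset \{\phi > 0\} \cup \{0\}$ is an open neighborhood of $\id_\Ga$.

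Next I would apply Theorem \ref{bcls}: along the analytic family $D$ of $\theta$-Anosov representations, the critical exponent $\sigma \mapsto \delta_\phi(\sigma(\Ga))$ is an analytic function of $\sigma$. Combining this with the pointwise equality $\dim_\phi(\La_{\sigma(\Ga)}^\theta) = \delta_\phi(\sigma(\Ga))$ established via Step 1 at each $\sigma \in D$, the analyticity of $\sigma \mapsto \dim_\phi(\La_{\sigma(\Ga)}^\theta)$ follows. The principal obstacle is verifying that Theorem \ref{bcls} applies uniformly across the family $D$ for an arbitrary linear form $\phi$ positive on the limit cone, rather than merely for the canonical linear forms arising in the classical BCLS framework; this is precisely where the strengthening of BCLS is needed, and where the persistence arguments of the previous paragraph become essential in order to keep the thermodynamic formalism (pressure functions, Gibbs states, etc.) well-defined throughout $D$.
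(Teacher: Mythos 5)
Your proposal is correct and follows exactly the route the paper intends: identify $\dim_\phi(\La^\theta_{\sigma(\Ga)})$ with the critical exponent $\delta_{\phi,\sigma(\Ga)}$ via \cite[Corollary 9.9]{DKO}, and then apply the strengthened BCLS statement (Theorem \ref{bcls}), whose extension to linear forms $\phi$ positive merely on $\L_\Ga^\theta-\{0\}$ rests on the upper semicontinuity of $\theta$-limit cones (Theorem \ref{tl}), just as you note. Your additional remarks on the persistence of the hypotheses along $D$ are a reasonable elaboration of what the paper leaves implicit.
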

When $\phi$ is non-negative on $\fa_\theta^+-\{0\}$, this result appears in \cite[Corollary 9.13]{DKO}.
We note that the analogous statement fails if we replace $\dim_\phi (\La^{\theta}_{\sigma(\Ga)} )$ by the Hausdorff dimension of $\La^{\theta}_{\sigma(\Ga)}$ with respect to a Riemannian metric (see \cite{LPX}).
\medskip 
\subsection*{More general critical exponents} For any discrete subgroup $\Ga<G$ and any linear form $\phi\in \fa^*$ that is positive on $\L_\Ga-\{0\}$, we can define
the $\phi$-critical exponent $0\le \delta_{\phi,\Ga} <\infty $ by
$$\delta_{\phi, \Ga}=\limsup_{T\to \infty}\frac{\log \#\{\ga\in \Ga: \phi(\mu(\ga))\le T\}}{T} .$$

\begin{theorem}\label{m2}
    Let $\Ga$ be a $\theta$-Anosov subgroup of $G$ such that $\L_\Ga $ is a convex cone contained
    in $\fa_\theta$. Let $\phi\in \fa^*$ satisfy $\phi>0$ on $\L_\Ga -\{0\}$. Then for all
    $\sigma  \in \op{Hom}(\Ga, G)$ sufficiently close to $\id_\Ga$,  
the critical exponent $0<\delta_{\phi, \sigma(\Ga)}<\infty$ is well-defined and
    the map 
    $$\sigma\mapsto 
    \delta_{\phi, \sigma(\Ga)}$$ is continuous at  $\id_\Ga \in \op{Hom}(\Ga, G)$.
\end{theorem}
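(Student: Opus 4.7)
The plan is to deduce \Cref{m2} from the upper semicontinuity of limit cones and the continuity of the $\theta$-growth indicator (\Cref{iuc}), linked by a Quint-type variational formula
\[
   \delta_{\phi,\sigma(\Ga)} \;=\; \sup_{v \in \L^{\theta}_{\sigma(\Ga)}-\{0\}} \frac{\psi^{\theta}_{\sigma(\Ga)}(v)}{\phi(v)}
\]
expressing the $\phi$-critical exponent as the maximum slope of the growth indicator along directions of the $\theta$-limit cone.

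First I would establish that $\delta_{\phi,\sigma(\Ga)}$ is finite and uniformly bounded for $\sigma$ in a neighborhood of $\id_\Ga$. Since $\L_\Ga$ is convex and $\L_\Ga \subset \fa_\theta \subseteq \fa_{\theta\cup\i(\theta)}$, the $\theta$-convexity hypothesis of \Cref{m1} is satisfied (this is case (3) of \Cref{three0}), so the upper semicontinuity \Cref{UC}, which does not require reductive Zariski closure, yields $\L_{\sigma(\Ga)} \subset \C$ for any preassigned closed cone $\C$ containing $\L_\Ga-\{0\}$ in its interior. By compactness and the hypothesis $\phi>0$ on $\L_\Ga-\{0\}$, the cone $\C$ may be chosen so that $\phi(v) \ge c\|v\|$ on $\C$ for some $c>0$. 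Combined with the Anosov bound $\|\mu(\sigma(\ga))\| \le C|\ga|$, which remains uniform near $\id_\Ga$ by the openness of the $\theta$-Anosov condition, this gives an a priori upper bound on $\delta_{\phi,\sigma(\Ga)}$ and in particular its finiteness.

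For the continuity itself, the upper bound $\limsup_{\sigma\to\id_\Ga}\delta_{\phi,\sigma(\Ga)} \le \delta_{\phi,\Ga}$ would follow from the upper semicontinuity of the $\theta$-limit cone (\Cref{tl}) together with that of $\psi^{\theta}_{\sigma(\Ga)}$ on cones shrinking down to $\L_\Ga^{\theta}$, using the uniform lower bound $\phi(v) \ge c\|v\|$ on $\C$. The reverse inequality $\liminf_{\sigma\to\id_\Ga}\delta_{\phi,\sigma(\Ga)} \ge \delta_{\phi,\Ga}$ would be obtained by selecting a direction $v_\ast \in \inte\L_\Ga^{\theta}$ with ratio $\psi^{\theta}_\Ga(v_\ast)/\phi(v_\ast)$ within $\e$ of $\delta_{\phi,\Ga}$ and invoking the pointwise convergence $\psi^{\theta}_{\sigma(\Ga)}(v_\ast) \to \psi^{\theta}_\Ga(v_\ast)$ provided by \Cref{iuc}; the variational formula then yields $\delta_{\phi,\sigma(\Ga)} \ge \psi^{\theta}_{\sigma(\Ga)}(v_\ast)/\phi(v_\ast)$, which tends to a value within $\e$ of $\delta_{\phi,\Ga}$.

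The main obstacle is that \Cref{iuc} is stated for Zariski dense $\Ga$, while \Cref{m2} imposes no such assumption. I would handle this by passing to the identity component $H$ of the Zariski closure of $\Ga$, inside which $\Ga$ is Zariski dense and the $\theta$-Anosov property, the convexity of $\L_\Ga$, and the positivity of $\phi$ descend to analogous data on the Cartan subspace of $H$. Continuity of the critical exponent in $\Hom(\Ga, H)$ transfers to $\Hom(\Ga, G)$ because small perturbations $\sigma(\Ga)$ remain close to $H$ on the finite generating set, and the $G$- and $H$-Cartan projections of $\sigma(\ga)$ differ by an error that is uniformly controlled on $\sigma(\Ga)$. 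A secondary technical point, justifying the variational formula with a general $\phi \in \fa^*$ rather than $\phi \in \fa_\theta^*$, is handled by the observation that $\L_\Ga \subset \fa_\theta$ forces $\mu(\ga) - p_\theta(\mu(\ga))$ to grow sublinearly in $|\ga|$ for $\ga \in \Ga$, so that $\phi(\mu(\ga))$ and $\phi(\mu_\theta(\ga))$ give the same critical exponent.
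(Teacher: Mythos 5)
Your route---expressing $\delta_{\phi,\sigma(\Ga)}$ by a variational formula in terms of $\psi^{\theta}_{\sigma(\Ga)}$ and then invoking the continuity of growth indicators---is genuinely different from the paper's proof, which never touches growth indicators: the paper sets $\phi'=\phi\circ p_\theta\in\fa_\theta^*$, gets continuity of $\sigma\mapsto\delta_{\phi',\sigma(\Ga)}$ directly from the BCLS-type Theorem \ref{bcls}, and then compares the two Poincar\'e series via the estimate $|(\phi-\phi')(v)|\le\e\phi(v)$ on a conical neighborhood $\{v:\|v-\L_\Ga\|\le\eta\|v\|\}$ that contains $\mu(\sigma(\Ga))$ up to a finite set (by Theorem \ref{UC}). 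Your closing ``secondary technical point'' is essentially this same comparison, but the engine of your argument is different, and it has two genuine gaps.

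First, and most seriously, Theorem \ref{iuc} (and the tangent-form/conformal-measure machinery behind the variational formula, Theorem \ref{larger} and Lemma \ref{stp}) requires $\Ga$ to be Zariski dense, whereas Theorem \ref{m2} does not. Your proposed reduction to the Zariski closure $H$ does not work: a deformation $\sigma\in\Hom(\Ga,G)$ near $\id_\Ga$ need not take values in $H$ or in any conjugate of $H$ --- indeed the counterexample of Section \ref{fa} is built precisely from deformations that leave $H$ and become Zariski dense in $G$ --- so there is no ``$H$-Cartan projection of $\sigma(\ga)$'' to compare with, and the continuity problem in $\Hom(\Ga,G)$ cannot be transferred to $\Hom(\Ga,H)$. (In addition, $H$ need not be reductive, and even when it is, $\mu_G$ and $\mu_H$ are related by the Weyl-group folding of Lemma \ref{hg}, so the Anosov data do not ``descend'' verbatim.) The paper sidesteps all of this because Theorem \ref{bcls} is stated without Zariski density.

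Second, your upper bound $\limsup_\sigma\delta_{\phi,\sigma(\Ga)}\le\delta_{\phi,\Ga}$ is not justified by what you cite. Theorem \ref{iuc} gives uniform convergence of $\psi^\theta_{\sigma(\Ga)}$ only on compact subsets of $\inte\L^\theta_\Ga$, while the supremum in your variational formula ranges over all of $\L^\theta_{\sigma(\Ga)}-\{0\}$, whose boundary is not contained in $\inte\L^\theta_\Ga$; a priori the ratio $\psi^\theta_{\sigma(\Ga)}(v)/\phi(v)$ could be large near $\partial\L^\theta_{\sigma(\Ga)}$. Closing this requires an extra argument locating the direction where the supremum is attained inside a fixed compact subset of $\inte\L^\theta_\Ga$, uniformly in $\sigma$ (as in the paper's proof of Corollary \ref{us}), or the tangency inequality $\psi^\theta_{\sigma(\Ga)}\le\delta_{\phi',\sigma(\Ga)}\phi'$ of \cite[Lemma 4.5]{KOW} --- which again presupposes Zariski density. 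The finiteness/uniform boundedness part of your argument and the sublinear comparison of $\phi(\mu(\ga))$ with $\phi(\mu_\theta(\ga))$ are fine in outline, though the latter must be carried out uniformly for $\sigma(\Ga)$ (via Theorem \ref{UC}) and not just for $\Ga$.
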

The key point is that the linear form $\phi\in \fa^*$ need not be $p_\theta$-invariant.
\begin{cor} \label{coo}  Let $\Ga$ and $\phi$ be as in Theorem \ref{m2}.
  Suppose that for some sequence $\sigma_n \to \id_\Ga $ in $\Hom(\Ga, G)$ and some $\kappa>0$,
    $$\sup_n \psi_{\sigma_n(\Ga)}(v) \le  \kappa \phi(v)\quad\text{ for all $v\in \fa^+$.}$$
    Then
    $$\psi_\Ga \le \kappa \phi .$$
\end{cor}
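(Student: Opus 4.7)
The plan is to use Theorem \ref{m2} together with the classical variational identity relating the growth indicator and the $\phi$-critical exponent. Since $\psi_\Ga(v)=-\infty$ for $v\notin \L_\Ga$, the desired inequality $\psi_\Ga\le \kappa\phi$ is automatic outside $\L_\Ga$; one only needs to verify it on $\L_\Ga-\{0\}$, where $\phi>0$ by hypothesis.

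The key ingredient, going back to Quint \cite{Q2}, is the identity
$$
\delta_{\phi,\Ga}\;=\;\sup_{v\in \L_\Ga-\{0\}}\frac{\psi_\Ga(v)}{\phi(v)}.
$$
I would verify this directly from the conic definition \eqref{ttt0} of the growth indicator (in the case $\theta=\Pi$): for $\ga\in \Ga$ with $\mu(\ga)$ lying in a narrow cone around a unit vector $v\in \L_\Ga$, one has $\phi(\mu(\ga))\approx \phi(v)\|\mu(\ga)\|$, so that counting with $\phi(\mu(\ga))\le T$ corresponds to counting with $\|\mu(\ga)\|\le T/\phi(v)$ up to subexponential error. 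Taking the infimum over cones around $v$ yields the lower bound $\delta_{\phi,\Ga}\ge \psi_\Ga(v)/\phi(v)$; a covering of the unit sphere in $\L_\Ga$ by finitely many such cones, using that $\phi$ is bounded away from zero on this compact set, yields the reverse inequality. The same formula applies to each $\sigma_n(\Ga)$.

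Feeding the standing hypothesis $\psi_{\sigma_n(\Ga)}\le \kappa\phi$ into this identity for $\sigma_n(\Ga)$ produces $\delta_{\phi,\sigma_n(\Ga)}\le \kappa$ uniformly in $n$. Since $\Ga$ satisfies the hypotheses of Theorem \ref{m2} (by assumption $\L_\Ga$ is a convex cone in $\fa_\theta$ and $\phi>0$ on $\L_\Ga-\{0\}$), the map $\sigma\mapsto \delta_{\phi,\sigma(\Ga)}$ is continuous at $\id_\Ga$, so $\delta_{\phi,\sigma_n(\Ga)}\to \delta_{\phi,\Ga}$ and therefore $\delta_{\phi,\Ga}\le \kappa$. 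Substituting back into the identity applied to $\Ga$, we get
$$\psi_\Ga(v)\;\le\; \delta_{\phi,\Ga}\,\phi(v)\;\le\; \kappa\,\phi(v)\qquad \text{for every }v\in \L_\Ga-\{0\},$$
which combined with the trivial case outside $\L_\Ga$ proves the corollary.

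The main obstacle is the honest derivation of the variational identity above and its uniform applicability to the perturbed groups $\sigma_n(\Ga)$ for large $n$. In particular, one must know that $\phi$ remains strictly positive on $\L_{\sigma_n(\Ga)}-\{0\}$ for large $n$, so that the sup $\sup_{v}\psi_{\sigma_n(\Ga)}(v)/\phi(v)$ is indeed the critical exponent and is finite. This positivity follows from the upper semicontinuity of limit cones (Theorem \ref{UC}, available to us since $\Ga$ falls under case (3) of Corollary \ref{three0}) together with the strict positivity of $\phi$ on the compact unit sphere in $\L_\Ga$.
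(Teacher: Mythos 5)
Your argument is correct and is essentially the paper's proof: bound $\delta_{\phi,\sigma_n(\Ga)}\le\kappa$ via the variational characterization of the critical exponent, pass to the limit using Theorem \ref{m2}, and convert back to the growth indicator. The only difference is that the identity $\delta_{\phi,\Ga}=\sup_{v}\psi_\Ga(v)/\phi(v)$ (equivalently, that $\delta_{\phi,\Ga}$ is the least $\kappa$ with $\psi_\Ga\le\kappa\phi$), which you propose to rederive from the conic definition, is simply cited by the paper from the ``tent property'' reference \cite{KMO}, so you need not reprove it.
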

Denote by $\rho$ the half sum of all positive roots of $(\fg, \fa)$ counted with multiplicity.
The inequality $\psi_\Ga \le  \rho$ is equivalent to  the quasi-regular representation
$L^2(\Ga\ba G)$ being tempered (\cite{EO}, \cite{LWW}).
Applied to the linear form $\rho$, Corollary \ref{coo} asserts that
if each $L^2(\sigma_n(\Ga)\ba G)$ is tempered, then $L^2(\Ga\ba G)$ is tempered as well.
This statement was proved earlier in \cite{FO} by studying the matrix coefficients, but Corollary \ref{coo} provides finer information about the behavior of growth indicators since it allows the use of more general linear forms.

\medskip

\subsubsection*{Acknowledgement}
We would like to thank Misha Kapovich, Fanny Kassel,  Dongryul Kim,  Eduardo Reyes,  Max Riestenberg, and Kostas Tsouvalas for useful discussions. We thank the anonymous referee for their careful reading and comments.

\section{Limit cones and  reductive Zariski closures}\label{lower}
Let $G$ be a connected linear reductive real algebraic group.
Let $A$ be a maximal real split torus of $G$.
Let $\fg$ and $\fa$  denote the Lie algebras of $G$
and $A$, respectively. Fix a positive Weyl chamber $\fa^+ \subset \fa$.
Fix a maximal compact subgroup $K< G$ such that the Cartan decomposition $G=K 
(\exp \fa^+) K$ holds. For $g\in G$,
there exists a unique element $\mu(g)=\mu_G(g)\in \fa^+$ such that $g\in K \exp (\mu(g)) K$, called the Cartan projection of $g$. The Cartan projection map 
\be\label{car} \mu=\mu_G:G\to\fa^+\ee  is continuous and proper.

Let $\Phi^+=\Phi^+(\fg, \fa)$ denote the set of all roots and $\Pi\subset \Phi^+$  the set of all  simple roots given by the choice of $\fa^+$.
We denote by $\langle\cdot,\cdot\rangle$ and $\|\cdot\|$ the inner product and norm on $\fg$ respectively, induced by the Killing form. 
We use the notation $\fa^1$ for the unit sphere in $\fa$.

Let $d$ denote the left $G$-invariant and right $K$-invariant distance on $G$ such that $d(e, g)=\|\mu(g)\|$ for all $g\in G$.

The Weyl group $\cal W$ is given by $N_K(A)/C_K(A)$, where
 $N_K(A)$ and $C_K(A)$ denote the normalizer and centralizer of $A$ in $K$, respectively. 
The Weyl group $\cal W$ acts on $\fa$ by the adjoint action: 
$$w.v=\op{Ad}_w(v)$$ for $w\in \cal W$ and $v\in \fa$.
 Let $\i:\fa \to \fa $ denote the opposition involution, that is,
\be\label{i} \i (v)=-\op{Ad}_{w_0}(v)\quad\text{for $v\in \fa$} \ee 
where $w_0$ is the longest Weyl element. It induces the involution $ \i:
\Pi\to \Pi$
which we also denote by $\i$: $\i(\alpha)=\alpha\circ \i$ for all $\alpha\in \Pi$.

	\subsection*{Limit cone}
 The \emph{limit cone} $\L_\Ga=\L_{\Ga, G}$ of a closed subgroup $\Gamma$ of $G$ is defined as the asymptotic cone of $\mu(\Ga)$:
 $$\L_\Ga=\{\lim t_i \mu(\ga_i)\in \fa^+: t_i\to 0, \ga_i\in \Ga\} .$$
Since $\mu(g^{-1}) = \i(\mu(g))$ for all $g \in G$, we have $\i(\L_\Ga)=\L_\Ga$. 

Any $g\in G$ can be written as the commuting product $g=g_hg_e g_u$ where $g_h$ is hyperbolic, $g_e$ is elliptic and $g_u$ is unipotent. 
	The hyperbolic component $g_h$ is conjugate to a unique element of the form $\exp(\lambda(g))$ where $ \lambda(g) \in \fa^+$. The element $\lambda(g)$ is called
	the Jordan projection of $g$. The Jordan projection $$\lambda:G\to \fa^+$$
is continuous. For a closed subgroup $\Gamma$ of $G$, we denote by 
$$\L_\Ga^{\Jor}=\L_{\Ga,G}^{\Jor}$$  the smallest closed cone of $\fa^+$ containing the  Jordan projection $\lambda(\Gamma)$.
Since $\la(g^n)=n\la(g)$ for all $g\in G$ and $n\in \N$, it follows that  $\L_\Ga^{\Jor}$ is equal to the asymptotic cone of $\lambda(\Ga)$.
Since for any $ g \in G $,
\[
\lambda(g) = \lim_{n \to \infty} \frac{\mu(g^n)}{n},
\]
we have the inclusion
\be\label{eqn:JLC}
\mathcal{L}_\Gamma \supset \mathcal{L}_\Gamma^{\rm Jor}.
\ee
 In general, $\cal L_\Ga\ne \L_\Ga^{\rm Jor}$. For example, when $\Ga$
is a unipotent subgroup, $\L_\Ga^{\rm Jor}=\{0\}$ while $\cal L_\Ga$ is non-trivial.

    \begin{theorem}\cite{Be} \label{be0} 
If  $\Ga$  is Zariski dense in $G$, then 
$\L_\Ga$ is convex and has non-empty interior.
Moreover, we have
    \be\label{jor0} \mathcal{L}_\Gamma = \mathcal{L}_\Gamma^{\Jor}. \ee 
\end{theorem}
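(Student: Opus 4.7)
The plan is to reduce the theorem to a product lemma for loxodromic elements in a Zariski dense subgroup, following the strategy of Benoist. First I would establish that $\L_\Ga^{\Jor}$ is convex with non-empty interior, and then show $\L_\Ga \subset \L_\Ga^{\Jor}$; the reverse inclusion is already given by \eqref{eqn:JLC}.

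The technical workhorse is the following product lemma. Since $\Ga$ is Zariski dense, the set of $\fa^+$-regular loxodromic elements of $\Ga$ is again Zariski dense, and one can select loxodromic pairs $g, h \in \Ga$ whose attracting and repelling flags in the complete flag variety $G/P$ are pairwise transverse. For such a pair and all sufficiently large $m, n \in \N$, the element $g^m h^n$ is loxodromic and
\[
\la(g^m h^n) = m\,\la(g) + n\,\la(h) + O(1),
\]
with the implicit constant depending only on $g$ and $h$. The proof uses proximality: the contraction of $g^m$ pushes vectors toward the attracting flag of $g$ while avoiding its repelling flag, so in every irreducible representation of $G$ the top singular value of $g^m h^n$ is well approximated by the product of the top singular values of $g^m$ and $h^n$; the estimate on Jordan projections then follows via the spectral radius formula simultaneously in the fundamental representations of $G$.

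Granting this lemma, convexity of $\L_\Ga^{\Jor}$ is routine: given $v_1, v_2 \in \L_\Ga^{\Jor}$, I would approximate them by Jordan projections $\la(\g_i^{(k)})$ of transverse loxodromic elements $\g_i^{(k)} \in \Ga$ (possible by Zariski density), and invoke the product lemma with suitably balanced exponents to realize any positive combination $s v_1 + t v_2$ as a limit of normalized Jordan projections. The interior is non-empty because Zariski density forces $\la(\Ga)$ to span $\fa$ as a real vector space — otherwise the Jordan projections would lie in a proper linear subspace, a Zariski-closed condition invariant under conjugation that is ruled out by density — and convexity then upgrades a spanning set to an interior point. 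For the inclusion $\L_\Ga \subset \L_\Ga^{\Jor}$, take a sequence $\g_n \in \Ga$ with $t_n \mu(\g_n) \to v$ and $t_n \to 0$, and decompose $\g_n = k_n \exp(\mu(\g_n)) l_n$ in $KAK$ form. After extracting a subsequence so that $k_n$ and $l_n$ converge in $K$, a mixed version of the product lemma — sandwiching $\g_n$ between high powers of a fixed loxodromic $h \in \Ga$ whose attracting and repelling flags are generic with respect to the limiting $K$-data — produces loxodromic elements of $\Ga$ whose Jordan projections equal $\mu(\g_n) + O(1)$, so that rescaling by $t_n \to 0$ places $v$ in $\L_\Ga^{\Jor}$.

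The hard part is the product lemma itself. The delicate step is controlling the $O(1)$ error uniformly once the attracting/repelling flags of $g$ and $h$ are fixed in generic position: this demands a quantitative form of proximality on the full flag variety of $G$, essentially a uniform contraction estimate, together with the simultaneous translation between singular values (Cartan projection) and eigenvalues (Jordan projection) in each fundamental representation. Everything else is a formal consequence of this lemma combined with standard density properties of Zariski dense subgroups.
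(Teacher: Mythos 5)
The paper does not actually prove this statement: Theorem \ref{be0} is quoted from Benoist \cite{Be}, so there is no internal argument to compare against. Your outline does reproduce the architecture of Benoist's proof: additivity of Jordan projections for transverse loxodromic pairs, convexity of $\L^{\Jor}_\Ga$ from balanced products $g^mh^n$, and the inclusion $\L_\Ga\subset\L_\Ga^{\Jor}$ obtained by multiplying each $\ga_n$ by a fixed element in general position with the limiting $KAK$ data (the Abels--Margulis--Soifer/Benoist mechanism; your variant with a fixed power of $h$ works because the resulting additive error is $O(1)$ in $n$ and is killed by the rescaling $t_n\to 0$).

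There are, however, two gaps. The serious one is the non-empty interior. You argue that $\la(\Ga)$ must span $\fa$ because ``the Jordan projections lie in a proper linear subspace'' would be a Zariski-closed, conjugation-invariant condition. It is not Zariski closed: for a proper subspace $V=\ker\chi$, the condition $\chi(\la(g))=0$ is a multiplicative relation among \emph{moduli} of eigenvalues, and absolute values of eigenvalues are not regular functions on $G$ --- already in $\SL(2,\R)$ the locus $\la(g)=0$ is $\{|\operatorname{tr}g|\le 2\}$, which is semialgebraic but not Zariski closed. Excluding the containment $\la(\Ga)\subset\ker\chi$ for Zariski dense $\Ga$ is in fact one of the substantive points of \cite{Be} and requires a separate argument (Benoist derives it from properties of the Iwasawa cocycle and the limit set in the flag variety, not from a formal density argument), so as written your proof of non-empty interior does not go through. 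The lesser issue is that everything rests on the product lemma $\la(g^mh^n)=m\la(g)+n\la(h)+O(1)$, which you assert with a plausible strategy (uniform proximality in the fundamental representations together with the passage from singular values to eigenvalues) but do not prove; since this lemma is the technical heart of \cite{Be}, the proposal should be read as a correct roadmap for the convexity statement and the equality $\L_\Ga=\L_\Ga^{\Jor}$, not as a self-contained proof, and the interior claim needs a genuinely different justification.
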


\subsection*{Limit cone of discrete subgroups with reductive Zariski closure}
The equality \eqref{jor0} holds more generally when the Zariski closure of $\Ga$ is a reductive subgroup of $G$. To discuss this, let $H$ be a 
 reductive algebraic subgroup  of $G$ in this subsection. There exists a Cartan involution of $\fg$ whose restriction to $\fh=\op{Lie}H$ is also a Cartan involution of $\fh$ \cite{Mo}. It follows that,
by replacing $K$ and $A$ up to conjugation, we may assume that $K\cap H$ is a maximal compact subgroup of $H$ and $A\cap H$ is a maximal real split torus of $H$. We will denote by $\fa_H$ the Lie algebra of $A\cap H$.
Choosing a positive Weyl chamber $\fa_H^+$, we have
\be\label{ch} H=(K\cap H)(\exp \fa_H^+) (K\cap H)\ee  and the corresponding the Cartan projection map $\mu_H:H\to \fa_H^+$.  When $\fa_H^+$ is not contained in $\fa^+$, we have $\mu_H \ne \mu_G|_H$.  
For $w\in \W$ and $v\in \fa$, we simply write $wv=w.v=\text{Ad}_w(v)$, omitting the dot.
\begin{lemma} \label{hg} 
For any closed subgroup $\Ga$ contained in $H$, we have
 \be\label{f0} \L_{\Ga,G} =\bigcup_{w\in \W} \{ w^{-1} v : v\in \L_{\Ga, H} \cap w \fa^+\} .\ee 
In particular,
 \be\label{f1} \mu_G(\exp \L_{\Ga, H})=\L_{\Ga, G}.\ee 
 Moreover, the same statements hold for $\L_{\Ga, G}^{\Jor}$ and $\L_{\G, H}^{\Jor}$.
\end{lemma}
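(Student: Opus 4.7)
The whole argument will rest on a single bridging observation: for every $h\in H$, the vectors $\mu_H(h)\in \fa_H^+\subset\fa$ and $\mu_G(h)\in \fa^+$ lie in a common $\W$-orbit, and more precisely $\mu_G(h)=w^{-1}\mu_H(h)$ whenever $w\in\W$ satisfies $\mu_H(h)\in w\fa^+$. This follows directly from the Cartan decomposition \eqref{ch} of $H$: writing $h=k_1\exp(\mu_H(h))k_2$ with $k_1,k_2\in K\cap H\subset K$ shows that $\exp(\mu_H(h))$ and $\exp(\mu_G(h))$ lie in the same $(K,K)$-double coset of $G$; two points of $\exp\fa$ are $(K,K)$-equivalent iff they are $\W$-conjugate, and $\fa^+$ meets each such orbit in a unique point.

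For the forward inclusion in \eqref{f0}, I would take $u=\lim_i t_i\mu_G(\ga_i)\in \L_{\Ga,G}$ with $t_i\to 0$ and $\ga_i\in\Ga$. For each $i$, pick $w_i\in\W$ with $\mu_H(\ga_i)\in w_i\fa^+$, so that the bridging observation gives $\mu_G(\ga_i)=w_i^{-1}\mu_H(\ga_i)$. By finiteness of $\W$, pass to a subsequence on which $w_i=w$ is constant; then $t_i\mu_H(\ga_i)=w(t_i\mu_G(\ga_i))\to wu=:v\in \L_{\Ga,H}$, and $v\in w\fa^+$ since $w\fa^+$ is a closed cone containing every $t_i\mu_H(\ga_i)$. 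This gives $u=w^{-1}v$ with $v\in\L_{\Ga,H}\cap w\fa^+$. For the reverse inclusion, suppose $v\in \L_{\Ga,H}\cap w\fa^+$ and write $v=\lim_i t_i\mu_H(\ga_i)$; extract a subsequence where $\mu_H(\ga_i)\in w'\fa^+$ for a fixed $w'\in \W$. Then $t_i\mu_G(\ga_i)=w'^{-1}t_i\mu_H(\ga_i)\to w'^{-1}v$, which lies in $\L_{\Ga,G}\subset\fa^+$. Both $w^{-1}v$ and $w'^{-1}v$ belong to $\fa^+\cap \W v$, and the fundamental-domain property of $\fa^+$ forces them to coincide, so $w^{-1}v=w'^{-1}v\in \L_{\Ga,G}$.

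The identity \eqref{f1} is then just a rewriting of \eqref{f0}, since for $v\in\L_{\Ga,H}\subset\fa$ the Cartan projection $\mu_G(\exp v)$ is by definition the $\fa^+$-representative of the $\W$-orbit of $v$, so as $v$ ranges over $\L_{\Ga,H}$ one recovers the right-hand side of \eqref{f0}. The Jordan analogues are proved by the same argument with $\la$ in place of $\mu$, once one records the parallel bridging observation: for $g\in H$, the hyperbolic part $g_h$ lies in $H$ by reductivity and is conjugate in $H$ to $\exp(\la_H(g))$ and in $G$ to $\exp(\la_G(g))$, so $\la_H(g)$ and $\la_G(g)$ are $\W$-conjugate in $\fa$, with $\la_G(g)=w^{-1}\la_H(g)$ for any $w\in\W$ satisfying $\la_H(g)\in w\fa^+$. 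The only subtle point anywhere in the proof is the reverse inclusion: when $v$ lies on a Weyl wall, the element $w$ appearing in the statement need not equal the $w'$ produced by the subsequence, and it is precisely the uniqueness of $\fa^+$-representatives that lets one identify $w^{-1}v$ with the limit $w'^{-1}v$.
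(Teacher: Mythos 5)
Your proof is correct and follows essentially the same route as the paper's: both rest on the observation that $\mu_G(\exp v)=w^{-1}v$ (resp. $\la_G=w^{-1}\la_H$) for $v\in \fa_H^+\cap w\fa^+$, and then chase limits through the decomposition $\fa=\bigcup_{w\in\W}w\fa^+$. The only cosmetic difference is that the paper argues on the open chambers $w\,\inte\fa^+$ and closes up by continuity of $\mu_G$, whereas you work with closed chambers, extract subsequences with constant $w$, and reconcile the possibly different Weyl elements via the uniqueness of $\fa^+$-representatives — a slightly more explicit treatment of the wall cases.
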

\begin{proof}
  Recall that $\fa=\bigcup_{w\in \W}  w (\fa^+)$ and if $v\in w_1\fa^+\cap w_2\fa^+$ for some $w_1,w_2\in \W$, then $w_1^{-1} v=w_2^{-1} v$. Moreover, $w ( \inte\fa^+)$, $w\in \W$, are disjoint from each other. 

Write $\fa_H^+=\bigcup_{w\in \W} (\fa_H^+\cap w\fa^+)$. Then $\mu_G$ on $\exp \fa_H^+$ is given as follows:  $$\mu_G(\exp v)= w^{-1} v\quad\quad \text{if $v\in \fa_H^+\cap w\fa^+$.}$$
Hence \eqref{f1} follows from the first claim.

  Suppose that  $v\in \L_{\Ga, H}\cap w\inte \fa^+$ for some $w\in \W$.
 By definition, for some $t_i>0$ and $\ga_i\in \G$ tending to $\infty$, we have
 $v=\lim t_i \mu_H(\ga_i)$.
    Then $\mu_H(\ga_i)\in w \inte \fa^+$ for all sufficiently large $i$ and hence 
    $w^{-1}\mu_H(\ga_i)=\mu_G(\ga_i)$.
    So
    $$w^{-1} v=\lim t_i w^{-1} \mu_H(\ga_i)=
    \lim t_i\mu_G(\ga_i).$$
    Therefore $w^{-1} (\L_{\Ga, H}\cap w\inte \fa^+)  \subset \L_{\Ga, G}$ for each $w\in \W$.
 By the continuity of the Cartan projection $\mu_G$, this implies the inclusion $\supset$ of \eqref{f0}. 
 
 Conversely, if $v\in \L_{\Ga, G}\cap \inte\fa^+$ is given by $\lim t_i \mu_G(\ga_i)$ for sequences  $t_i>0$ and
 $\ga_i\in \Ga$ tending to $\infty$, then $\mu_G(\ga_i)\in \inte\fa^+$ for all large $i$. Moreover, there exists $w\in \W$ such that $\mu_H(\ga_i)\in w\inte \fa^+$ for all sufficiently large $i$.
 Hence $\mu_G(\ga_i)=w^{-1} \mu_H(\ga_i)$ for all sufficiently large $i$.
Therefore
if we set  $v'=\lim t_i \mu_H(\ga_i)$, then $v'\in \L_{\Ga, H}\cap w\fa^+$ and
$v=  w^{-1} v'$. 
Hence $$\L_\Ga \cap \inte\fa^+\subset \bigcup_{w\in \W} \{ w^{-1} v : v\in \L_{\Ga, H} \cap w \fa^+\}.$$
 Again by continuity of $\mu_G$,  this implies the inclusion $\subset$ of \eqref{f0}.
 Since $\L_\Ga^{\Jor}$ is the asymptotic cone of $\lambda(\Ga)$, the same proof replacing the Cartan projection $\mu$ by the Jordan projection
 $\lambda$ proves the last claim about $\L_\Ga^{\Jor}$ and $\L_{\Ga, H}^{\Jor}$.
\end{proof}
   
\begin{theorem}\label{be} 
If $\Ga$ is a closed subgroup of $G$ whose 
Zariski closure is reductive, then
    \be\label{jor} \mathcal{L}_\Gamma = \mathcal{L}_\Gamma^{\Jor}. \ee 
\end{theorem}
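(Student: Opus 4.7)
The plan is to first apply Benoist's theorem (Theorem \ref{be0}) inside the Zariski closure $H$ of $\Ga$, and then transfer the resulting equality from $H$ to $G$ by means of Lemma \ref{hg}.

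Let $H$ denote the Zariski closure of $\Ga$ in $G$; by hypothesis $H$ is reductive. After replacing $\Ga$ by the finite-index subgroup $\Ga_0:=\Ga\cap H^\circ$, a passage which I claim leaves both $\L_\Ga$ and $\L_\Ga^{\Jor}$ unchanged, I may assume that $H$ is Zariski connected. Then $\Ga$ is Zariski dense in the connected reductive algebraic group $H$, so Theorem \ref{be0} applied inside $H$ yields
$$\L_{\Ga, H} \;=\; \L_{\Ga, H}^{\Jor}.$$

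Next, I would invoke Lemma \ref{hg}, which expresses $\L_{\Ga, G}$ as $\bigcup_{w\in\W}\{w^{-1} v: v\in\L_{\Ga, H}\cap w\fa^+\}$ and, by its last sentence, gives the identical formula, with $\Jor$-superscripts throughout, for $\L_{\Ga, G}^{\Jor}$. Since the two cones inside $H$ coincide by the previous step, the corresponding images under this Weyl-group-theoretic transform coincide as well, giving $\L_{\Ga, G}=\L_{\Ga, G}^{\Jor}$, which is the desired equality.

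The main technical point is the preliminary reduction to connected $H$. For $\L^{\Jor}$ it is immediate from the identity $\la(\ga^n)=n\la(\ga)$: raising each coset representative of $\Ga/\Ga_0$ to the common power $[\Ga:\Ga_0]$ lands inside $\Ga_0$, so the Jordan projection cones of $\Ga$ and of $\Ga_0$ coincide after rescaling, which does not affect the asymptotic cone. For $\L$ it follows from the uniform bound $\|\mu_G(r\ga)-\mu_G(\ga)\|=O(1)$ as $\ga$ varies and $r$ ranges over a fixed finite set of coset representatives, a standard consequence of the $K$-biinvariant Lipschitz behaviour of $\mu_G$; combining this with $t_i\to 0$ shows that any accumulation direction of $t_i\mu_G(\ga_i)$ for $\ga_i\in\Ga$ is already attained by a corresponding sequence in $\Ga_0$. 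Once these reductions are in place, no further substantive obstacle remains, and the proof is completed by straightforward composition of Theorem \ref{be0} and Lemma \ref{hg}.
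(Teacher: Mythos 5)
Your proposal is correct and follows essentially the same route as the paper: apply Theorem \ref{be0} to $\Ga$ inside its Zariski closure $H$ and transfer the equality to $G$ via Lemma \ref{hg}. The only difference is your preliminary reduction to connected $H$ (which the paper's two-line proof glosses over), and your justifications for the invariance of $\L_\Ga$ and $\L_\Ga^{\Jor}$ under passage to the finite-index subgroup $\Ga\cap H^\circ$ are sound.
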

	
\begin{proof} If  $H$ denotes the Zariski closure of $\Ga$, then,
 by Theorem \ref{be0}, we have
    $\L_{\Ga, H}=\L^{\Jor}_{\Ga, H}.$  Hence the conclusion follows from Lemma \ref{hg}. \end{proof}

In general, the reductive Zariski closure condition on $\Ga$ does not imply the convexity of the limit cone (see section \ref{fa}). On the other hand, we have the following:
\begin{lemma}\label{convex1} If $\Ga$ is a discrete subgroup of $G$ with reductive Zariski closure such that $\L_\Ga\subset \inte\fa^+\cup\{0\}$, then
$\L_\Ga$ is convex.
\end{lemma}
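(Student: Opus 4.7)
The plan is to transfer the convexity of the limit cone from the Zariski closure $H$ of $\Ga$ down to $G$ via Lemma~\ref{hg}. Let $H$ denote the Zariski closure of $\Ga$; by hypothesis $H$ is reductive, so $\Ga$ is Zariski dense in $H$. Applying Benoist's convexity theorem \cite{Be}---the reductive version of Theorem~\ref{be0}---to $H$, we conclude that $\L_{\Ga,H}$ is a convex cone with non-empty interior in $\fa_H$. In particular $\L_{\Ga,H}\setminus\{0\}$ is connected: a closed convex cone with non-empty interior stays connected after removing its apex (the rank-one case degenerates to $\L_{\Ga,H}=\fa_H^+$, which is a ray).

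Next, I would translate the hypothesis $\L_\Ga\subset\inte\fa^+\cup\{0\}$ through the identity \eqref{f0}: it says exactly that every nonzero $v\in\L_{\Ga,H}$ is $G$-regular, i.e.\ $v\in\bigsqcup_{w\in\W}w\,\inte\fa^+$. Since the $G$-regular locus is an open disjoint union of connected chambers and $\L_{\Ga,H}\setminus\{0\}$ is connected, the entire cone $\L_{\Ga,H}$ must lie in the closure of a single chamber: $\L_{\Ga,H}\subset w\fa^+$ for some $w\in\W$. Lemma~\ref{hg} then collapses to $\L_\Ga=w^{-1}\L_{\Ga,H}$, which is convex as the image of a convex cone under the linear map $w^{-1}$.

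The main obstacle I would flag is that Theorem~\ref{be0} is stated for semisimple $G$, while here we apply convexity of Benoist to the possibly non-semisimple reductive Zariski closure $H$. If this is not granted directly by the cited form of the theorem, one argues via the almost-direct product decomposition $H=Z\cdot[H,H]$: the $\fa_H$-valued limit cone splits as a linear (hence convex) contribution from the central factor $Z$ plus a convex contribution from the semisimple derived subgroup $[H,H]$, to which Theorem~\ref{be0} applies in its stated form.
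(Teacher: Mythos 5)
Your proposal is correct and follows essentially the same route as the paper: apply Theorem \ref{be0} to the reductive Zariski closure $H$ to get convexity of $\L_{\Ga,H}$, use the regularity hypothesis together with \eqref{f0} to place $\L_{\Ga,H}-\{0\}$ inside a single Weyl-translated chamber $w\,\inte\fa^+$, and conclude $\L_\Ga=w^{-1}\L_{\Ga,H}$ is convex. The connectedness phrasing and the caveat about the reductive (rather than semisimple) case are harmless refinements; note that the paper's Section 2 already states Theorem \ref{be0} for connected reductive $G$, so it applies to $H$ directly.
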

\begin{proof} Let $H$ be the Zariski closure of $\Ga$.
   By Lemma \ref{hg}., $\L_\Ga =\{ w^{-1} v : v\in \L_{\Ga, H} \cap w \fa^+\}$.
    By the hypothesis that $\L_\Ga \subset  \inte\fa^+\cup\{0\} $, the set
    $\L_{\Ga, H}-\{0\}$ is contained in the disjoint union $ \bigcup_{w\in \W} (\fa_H^+\cap w \inte \fa^+) $. As $\L_{\Ga, H}$ is convex by Theorem \ref{be0}, it follows that for some unique $w\in \W$,
    $$\L_{\Ga, H}\subset  (\fa_H^+\cap w \inte \fa^+). $$ Therefore,
    $\L_\Ga = w^{-1} \L_{\Ga, H}$, and hence it is convex.
\end{proof}

\subsection*{$\theta$-limit cones}
For a non-empty subset $\theta\subset \Pi$,  set 
\be\label{at0} \fa_\theta=\bigcap_{\alpha\in \Pi-\theta} \ker\alpha.\ee 
Denote by $\cal W_\theta$ the subgroup consisting  of all elements of the Weyl group $\cal W$ fixing $\fa_\theta$ pointwise; it is generated by reflections with respect to the walls $\ker \alpha$, $\alpha\in \Pi-\theta$.

Let \be\label{pt} p_\theta:\fa\to \fa_\theta\ee be  the unique projection that is invariant under $\W_\theta$.  Since $\W$ acts by isometries on $\fa$, $p_\theta$ can also be defined as the orthogonal projection to $\fa_\theta$. In particular, $p_\theta\vert_{\fa_\theta}$ is the identity map.
Set $\fa_\theta^+=\fa_\theta\cap \fa^+$.
Let $$\mu_\theta:=p_\theta\circ\mu:G\to \fa_\theta^+ .$$
For a closed subgroup $\Ga$ of $G$,
its $\theta$-limit cone $\L_\Ga^{\theta}$ is defined as the asymptotic cone of $\mu_\theta(\Ga)$:
$$\L_\Ga^{\theta}:=\{\lim t_i \mu_\theta(\ga_i)\in \fa_\theta^+: t_i\to 0, \ga_i\in \Ga\} .$$
We have $\L_\Ga^\theta =p_\theta(\L_\G).$
Note that $\fa_\Pi^+=\fa^+$ and $\L_{\Ga}^\Pi=\L_\Ga$.

\section{Lower semicontinuity of limit cones}\label{lower1}
Let $G$ be a connected semisimple real algebraic group. We keep the notation from section \ref{lower}.
\begin{Def}\ \label{top} 
 For a finitely generated group $\Ga$,
the topology on $\op{Hom}(\Ga, G)$ is given as follows: for a fixed finite generating subset $S$ of $\Ga$,  we have $\sigma_n\to \sigma $ in $\op{Hom}(\Ga, G)$ if for each $\ga\in S$, $\sigma_n(\ga)\to \sigma(\ga)$ as $n\to \infty$. 
This topology is  independent of the choice of a generating subset.
\end{Def}
If $\Ga<G$, we denote by $\id_\Ga$ the inclusion map, and
then the subsets
$\{\sigma\in \Hom(\Ga, G): \max_{\ga\in S} d(\sigma(\ga), \ga)< \e\}$, $\e>0$,
form a basis of neighborhoods of $\id_\G$.

Throughout the paper, all closed cones in $\fa_\theta^+$ are assumed to be non-degenerate, i.e., they are not $\{0\}$.

\begin{Def}\ \label{top2} For a subset $\theta\subset \Pi$,
 denote by $\mathsf C(\fa_\theta^+)$ the space of all closed cones in $\fa_\theta^+$  with the topology in which
 $\mathcal C_i\to \mathcal C$ if the Hausdorff distance between $\mathbb P(\mathcal C_i)$ and $\mathbb P(\mathcal C)$ in $\mathbb P(\fa_\theta^+)$ converges to zero as $i\to\infty$.
With this topology, ${\mathsf C}(\fa_\theta^+)$ is a compact metrizable space.
\end{Def}
If $\fa_\theta^1$ denotes the unit sphere, then $\C_i\to \C$ if and only if
the Hausdorff distance between $\C_i\cap \fa_\theta^1$ and $\C\cap \fa_\theta^1$ tends to $0$.

We prove the lower semicontinutity of limit cones under the reductive Zariski closure hypothesis:
 \begin{prop}[Lower semicontinuity]\label{lem:LC}\label{LC}
     Let $\G$ be a finitely generated discrete subgroup of $G$, and let
       $\sigma_n$ be a sequence of discrete representations converging to $\id_\Ga$ in $\Hom(\Ga, G)$.
       Then for any $\theta\subset \Pi$,
        any accumulation point $\L$ of the sequence $\L^\theta_{\sigma_n(\G)}$   in $\mathsf{C}(\fa_\theta^+)$ contains $p_\theta(\L^{\op{Jor}}_\G)$.

   In particular, if
     $\L^\theta_\Ga=p_\theta(\L_\Ga^{\op{Jor}})$ (e.g., the Zariski closure of $\Ga$ is reductive), then $\L$ contains $\L^\theta_\Ga$.
     
 \end{prop}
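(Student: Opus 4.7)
The strategy is to push to the limit cones the continuity of the Jordan projection map $\lambda:G\to \fa^+$, using the fact that $p_\theta(\L^{\Jor}_\Ga)$ is generated, as an asymptotic cone, by the projected Jordan projections of elements of $\Ga$, each of which is approximated by that of $\sigma_n(\ga)$. The last sentence of the proposition is then immediate: if $\Ga$ has reductive Zariski closure, Theorem~\ref{be} yields $\L_\Ga = \L^{\Jor}_\Ga$, and applying $p_\theta$ gives $\L^\theta_\Ga = p_\theta(\L^{\Jor}_\Ga)$.

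First, by the compactness of $\mathsf C(\fa_\theta^+)$ and by passing to a subsequence, assume that $\L^\theta_{\sigma_n(\Ga)}\to \L$ in $\mathsf C(\fa_\theta^+)$. It suffices to show that every non-zero $v\in p_\theta(\L^{\Jor}_\Ga)$ belongs to $\L$. Since $\L^{\Jor}_\Ga$ is the asymptotic cone of $\lambda(\Ga)$, write
\[
v=\lim_{i\to\infty} t_i\, p_\theta(\lambda(\ga_i))
\]
for some $t_i\ge 0$ and $\ga_i\in \Ga$.

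Next, for each fixed $i$, the convergence $\sigma_n(\ga_i)\to \ga_i$ in $G$ combined with the continuity of $\lambda$ and of $p_\theta$ gives $p_\theta(\lambda(\sigma_n(\ga_i)))\to p_\theta(\lambda(\ga_i))$ as $n\to\infty$. A diagonal choice then produces indices $n_i\to\infty$ with
\[
\bigl\| t_i\, p_\theta(\lambda(\sigma_{n_i}(\ga_i))) - t_i\, p_\theta(\lambda(\ga_i))\bigr\| < 1/i,
\]
so the vectors $v_i := t_i\, p_\theta(\lambda(\sigma_{n_i}(\ga_i)))$ converge to $v$. By the inclusion \eqref{eqn:JLC} applied to $\sigma_{n_i}(\Ga)$, the element $\lambda(\sigma_{n_i}(\ga_i))$ lies in $\L_{\sigma_{n_i}(\Ga)}$; projecting under $p_\theta$ and using that $\L^\theta_{\sigma_{n_i}(\Ga)}$ is a cone, we get $v_i\in \L^\theta_{\sigma_{n_i}(\Ga)}$.

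Finally, since $\L^\theta_{\sigma_n(\Ga)}\to \L$ in $\mathsf C(\fa_\theta^+)$ and $v_i\to v$ with $v\ne 0$, the normalized vectors $v_i/\|v_i\|$ lie in $\L^\theta_{\sigma_{n_i}(\Ga)}\cap \fa_\theta^1$ and converge to $v/\|v\|\in \L\cap \fa_\theta^1$ by the Hausdorff convergence of projectivizations. Hence $v\in \L$, concluding the proof. The only mildly delicate step is the diagonal extraction together with checking that $v\ne 0$ interacts correctly with the projective Hausdorff topology on $\mathsf C(\fa_\theta^+)$; this is where the normalization onto $\fa_\theta^1$ is invoked, but there is no deeper obstacle, since the reductive Zariski closure hypothesis enters only through the elementary identity $\L_\Ga^\theta = p_\theta(\L^{\Jor}_\Ga)$ provided by Theorem~\ref{be}.
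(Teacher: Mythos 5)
Your proof is correct and follows essentially the same route as the paper's: both approximate a point of $p_\theta(\L_\Ga^{\Jor})$ by the projected Jordan projections $p_\theta(\lambda(\sigma_{n_i}(\ga_i)))$ using continuity of $\lambda$ together with a diagonal extraction, then invoke the inclusion \eqref{eqn:JLC} and Hausdorff convergence of the projectivized cones. The only cosmetic difference is that the paper normalizes to unit vectors $\lambda'(\ga_k)$ from the start, whereas you carry the scalars $t_i$ and normalize at the end.
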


 \begin{proof} Since we have $\L_{\sigma_{n}(\G)} \supset \L^{\Jor}_{\sigma_{n}(\G)}$  by \eqref{eqn:JLC}, it suffices to prove that
if $\L'$ is an accumulation point of the sequence $p_\theta(\L_{\sigma_n(\G)}^{\op{Jor}})$   in $\mathsf{C}(\fa_\theta^+)$, then $\L'$ contains $p_\theta(\L^{\op{Jor}}_\G)$.
 By passing to a subsequence, we may assume that $p_\theta(\L_{\sigma_n(\G)}^{\Jor} )\to \L'$ as $n\to\infty$. Since $\P(\L^\theta_\G)$ is compact, it is enough to show that for all unit vectors $v\in p_\theta(\L_\G^{\Jor})$, the distance $d_{\fa_\theta}(v,p_\theta(\L_{\sigma_n(\G)}^{\Jor}))$ converges to $0$ as $n\to \infty$.

Let $v\in p_\theta(\L^{\op{Jor}}_\Ga)$ be a unit vector. Then 
there exists a sequence $\g_k\in\G$ such that 
$$\| p_\theta(\lambda'(\g_k))-v \| <\frac{1}k\quad\text{for all $k\ge 1$,}$$
where $\lambda'(\ga_k)$ denotes the unit vector in the direction of $\lambda(\ga_k)$.
     Since the Jordan projection $\lambda:G\to\fa^+$ is continuous,
     for each $k$, we can choose $n_k$ large enough such that
     $\| \lambda'(\g_k) - \lambda'(\sigma_{n_k}(\g_k))\| <\frac{1}k$. Thus
     $$\| p_\theta(\lambda'(\sigma_{n_k}(\g_k)))- v\| < \frac{2}k
    \quad\text{     for all $k\ge 1$.}$$
     
    Since $ \lambda'(\sigma_{n_k}(\g_k)) \in \L^{\Jor}_{\sigma_{n_k}(\G)}$,
    this shows that 
     \begin{equation}\label{eqn:3.1}
         d_{\fa_\theta}(p_\theta(\L^{\Jor}_{\sigma_{n_k}(\G)}), v) \to 0
         \quad \text{as } n\to\infty,
     \end{equation}
proving the claim.
 This finishes the proof together with Theorem \ref{be}.
 \end{proof}
 
Since $\mu(\Ga)$ is not contained in $\L_\Ga$ in general, one cannot replace the Jordan projection $\lambda(\ga_k)$ by the Cartan projection $\mu(\ga_k)$
in the above proof.

\section{Limit cones of Anosov subgroups}
Let $G$ be a connected semisimple real algebraic group. We keep the notation from section \ref{lower}. Fix a non-empty subset $\theta\subset \Pi$ in this section.
\begin{Def}\label{anosov}  For a finitely generated subgroup $\Ga<G$,
a representation $\sigma:\Ga\to G$ is called $\theta$-Anosov
if there exists a constant $C>1$ such that for all $\alpha\in \theta$
and $\ga\in \G$,
\be\label{eqn:ta} \alpha(\mu(\sigma(\ga))\ge C^{-1} |\ga| -C\ee 
where $|\cdot |$ denotes the word metric on $\Ga$ with respect to a fixed finite generating subset of $\Ga$.
We say that $\Ga$ is a $\theta$-Anosov subgroup of $G$ if the natural inclusion map $\op{id}_\Ga$ is $\theta$-Anosov.
\end{Def}
 Each $\theta$-Anosov representation is discrete and has finite kernel \cite{GW}. In particular, all $\theta$-Anosov subgroups are discrete subgroups of $G$.
A $\Pi$-Anosov subgroup is also called a Borel-Anosov subgroup. By an Anosov subgroup, we mean a $\theta$-Anosov subgroup for some non-empty $\theta\subset \Pi$.
As mentioned in the introduction, all Anosov subgroups in our paper are assumed to be non-elementary, i.e., not virtually cyclic.

Since there is a uniform constant $C>0$ such that $\|\mu(g)\| \ge C \alpha (\mu(g)) $ for any $g\in G$ and any $\alpha\in \Pi$,  the inclusion map $(\Ga, |\cdot |)\to  (G, d)$ is a quasi-isometric embedding for any Anosov subgroup $\Ga$,
that is, there exists $L>1$ such that for all $\ga_1, \ga_2\in \Ga$,
\be\label{qi} L^{-1}d(\ga_1, \ga_2) -L \le |\ga_1^{-1}\ga_2| \le L d(\ga_1, \ga_2) +L .\ee

  It is immediate that for a $\theta$-Anosov subgroup
$\Ga < G$, 
 \be\label{kan} \L_\Ga\cap \ker \alpha=\{0\} \quad\text{ for all $\alpha\in \theta$.}\ee

Any real algebraic subgroup $L$ of $G$ admits a Levi decomposition:
$L=H\ltimes U$ where $U$ is the unipotent radical of $L$ and $H$ is a reductive subgroup normalized by $U$ \cite{Bo}.
Without the reductive Zariski closure assumption, we have the following (cf. Theorem \ref{be}). 
\begin{lemma}\label{Jordan}
    For any $\theta$-Anosov subgroup $\G$ of $G$, we have
    $$\L_{\G}^{\theta} = p_\theta (\L_\G^{\rm Jor}).$$

  Moreover if $\overline\Ga^{\op{Zar}}=H\ltimes U$ is a Levi decomposition of the Zariski closure of $\Ga$ and $\Ga_H$
  denotes the projection of $\Ga$ to $H$, then $\Ga_H$ is discrete and $\L_\Ga^\theta=\L_{\Ga_H}^\theta$.   
\end{lemma}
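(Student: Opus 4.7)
For the equality $\L_\Ga^\theta = p_\theta(\L_\Ga^{\rm Jor})$, the inclusion $\supset$ is immediate from \eqref{eqn:JLC} by applying the linear map $p_\theta$. For the reverse inclusion, my plan is to exploit the word hyperbolicity of $\Ga$ (a consequence of being Anosov) together with the following quantitative Morse-type fact: for a $\theta$-Anosov subgroup there is a constant $C>0$ such that for every $\eta \in \Ga$ of minimal word length in its conjugacy class,
$$\|\mu_\theta(\eta) - \lambda_\theta(\eta)\| \leq C$$
(cf.\ Bochi--Potrie--Sambarino or Kassel--Potrie; this is a standard consequence of the Anosov Morse property). Given $v = \lim_i t_i\, \mu_\theta(\ga_i) \in \L_\Ga^\theta - \{0\}$, the contribution to $\mu_\theta(\ga_i)$ splits into a ``loxodromic'' piece (essentially $\lambda_\theta$ of a minimal conjugate $\eta_i$) and a ``conjugator'' piece; since $\lambda_\theta(\eta_i) \in p_\theta(\L_\Ga^{\rm Jor})$ and the conjugator piece, after rescaling, is itself a limit of Cartan projections of strictly shorter elements, a compactness argument on word length yields $v \in p_\theta(\L_\Ga^{\rm Jor})$.

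For the \emph{moreover} part, let $\pi : H\ltimes U \to H$ denote the projection. First I would show $\Ga\cap U = \{e\}$: a nontrivial unipotent $u\in\Ga$ has $\|\mu(u^n)\| = O(\log n)$, violating the Anosov lower bound $\alpha(\mu(u^n)) \geq C^{-1}|u^n|_\Ga - C$ since $|u^n|_\Ga$ grows linearly in $n$. Hence $\pi|_\Ga : \Ga\to\Ga_H$ is a bijective group homomorphism and $\Ga_H$ is Zariski dense in the reductive group $H$. For discreteness of $\Ga_H$, I would show $\|\mu_H(\pi(\ga_n))\| \to \infty$ whenever $|\ga_n|_\Ga \to \infty$; the key point is that the linear Anosov growth of $\alpha\circ\mu(\ga_n)$ can only originate from the semisimple factor $\pi(\ga_n) \in H$, since the unipotent factor contributes at most logarithmically. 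For the equality $\L_\Ga^\theta = \L_{\Ga_H}^\theta$, naturality of the Jordan decomposition under $\pi$, together with the fact that every hyperbolic element of $H\ltimes U$ is conjugate in $H\ltimes U$ to an element of $H$, yields $\lambda(\ga) = \lambda(\pi(\ga))$ for all $\ga\in\Ga$, so that $\L_\Ga^{\rm Jor} = \L_{\Ga_H}^{\rm Jor}$. Theorem \ref{be0} applied to $\Ga_H$ in $H$, combined with Lemma \ref{hg}, then gives $\L_{\Ga_H, G} = \L_{\Ga_H, G}^{\rm Jor}$, so the first part of the lemma yields
$$\L_\Ga^\theta = p_\theta(\L_\Ga^{\rm Jor}) = p_\theta(\L_{\Ga_H}^{\rm Jor}) = p_\theta(\L_{\Ga_H}) = \L_{\Ga_H}^\theta.$$

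The main obstacle I anticipate is establishing the uniform Morse estimate $\|\mu_\theta(\eta) - \lambda_\theta(\eta)\| \le C$ for minimal conjugates and, relatedly, the discreteness of $\Ga_H$: both require using the Anosov property \emph{quantitatively} via Morse quasi-geodesics, since the pointwise lower bound $\alpha(\mu(\ga)) \geq C^{-1}|\ga|_\Ga - C$ alone is not sufficient. Once these Morse-type inputs are in place, the remaining steps---naturality of the Jordan decomposition, Benoist's theorem applied to $\Ga_H$, and Lemma \ref{hg}---are essentially routine.
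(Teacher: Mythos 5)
Your overall architecture inverts the paper's: you try to prove $\L_\Ga^\theta \subset p_\theta(\L_\Ga^{\rm Jor})$ directly, and then deduce the statement about $\Ga_H$ from it, whereas the paper first controls $\mu_\theta(\ga)$ in terms of $\mu_\theta(\ga_H)$ and then lets Benoist's theorem for the Zariski dense discrete subgroup $\Ga_H<H$ do all the work. The inversion would be acceptable if your direct argument for the first inclusion were sound, but it is not. The quoted Morse estimate for conjugacy-minimal elements is a reasonable citable input; the problem is the reduction to it. Writing $\ga_i=\beta_i\eta_i\beta_i^{-1}$ with $\eta_i$ conjugacy-minimal, the ``conjugator piece'' is not negligible: taking $\eta_i=\eta$ fixed and $|\beta_i|\to\infty$, the rescaled limit $v=\lim_i t_i\mu_\theta(\ga_i)$ receives no contribution at all from $\lambda_\theta(\eta)$, and the most your splitting could give is that $v$ lies in the convex hull of $p_\theta(\L_\Ga^{\rm Jor})$ together with limits of rescaled $\mu_\theta(\beta_i)$ --- that is, together with points of $\L_\Ga^\theta$ itself. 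This is circular, and ``strictly shorter'' does not rescue it: there is no base case to descend to when the nesting depth of conjugators grows with $i$, and the convexity of $p_\theta(\L_\Ga^{\rm Jor})$ that such an argument would additionally need is itself only available a posteriori, via $\Ga_H$ and Benoist.

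The \emph{moreover} part has an independent gap. The assertion that ``the unipotent factor contributes at most logarithmically'' is unjustified and, read literally, false: the bound $\|\mu(u^n)\|=O(\log n)$ applies to powers of a \emph{fixed} unipotent, whereas here the $U$-component $u_\ga$ of $\ga=\ga_H u_\ga$ varies with $\ga$ and $\|\mu(u_\ga)\|$ can grow linearly in $|\ga|$ (affine deformations already exhibit this). In fact $\|\mu(\ga)-\mu(\ga_H)\|$ is unbounded in general; what is true --- and is precisely the paper's key input, \cite[Lemma 2.11]{Tsouvalas} --- is that $|\omega_\alpha(\mu(\ga)-\mu(\ga_H))|\le C$ for all $\alpha\in\theta$, i.e.\ only the $\fa_\theta$-components are controlled (which is exactly why the lemma compares $\L_\Ga^\theta$ with $p_\theta(\L_\Ga^{\rm Jor})$ rather than $\L_\Ga$ with $\L_\Ga^{\rm Jor}$). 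That single estimate yields both the discreteness of $\Ga_H$, since $\omega_\alpha(\mu(\ga))\to\infty$ by the Anosov property, and $\L_\Ga^\theta=\L_{\Ga_H}^\theta$; the first identity then follows from Benoist's theorem applied to $\Ga_H$ together with $\lambda(\ga)=\lambda(\ga_H)$ --- the one step of your proposal that is correct as stated. Without a substitute for that estimate, both halves of your argument are incomplete.
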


\begin{proof}
Let $G_0= \overline\Ga^{\op{Zar}}=H\ltimes U$.  For $g\in G_0$, write $g_H\in H$ for the $H$-component of $g$, so that $\Ga_H=\{\ga_H:\ga\in \Ga\}$.
   By \cite[Lemma 2.11]{Tsouvalas}, there exists $C>0$ such that for each $\alpha\in\theta$ and each $\g\in\G$,
    \be\label{tt}
    | \omega_\alpha\left( \mu(\g)- \mu(\g_H) \right) |\le C
    \ee 
where $\omega_\alpha$ is the fundamental weight corresponding to $\alpha$ (see, e.g., \cite[2.1]{DKO}). Let  $\ga_i\in \G$ be an infinite sequence going to $\infty$, and $\alpha\in \theta$.
Since $\Ga$ is $\theta$-Anosov,  we have $\alpha(\mu(\ga_i))\to \infty$, and hence $\omega_\alpha(\mu(\ga_i))\to \infty$.  By \eqref{tt}, $\mu((\ga_i)_H)\to \infty$. Therefore
$\Ga_H$ is a Zariski dense {\it discrete} subgroup of $H$. By Theorem \ref{be}, we have
$\L_{\Ga_H}=\L_{\Ga_H}^{\op{Jor}}$.
Since $\lambda(g)=\lambda(g_H)$ for any $g\in G_0$, we have 
$\L_{\Ga_H}^{\op{Jor}}=\L_{\Ga}^{\op{Jor}}$.

Since $\{\omega_\alpha:\alpha\in \theta\}$ forms a basis for $\fa_\theta^*$,
\eqref{tt} now implies that 
$$ \sup_{\ga\in \Ga} \| p_\theta (\mu(\ga) - \mu(\ga_H))\|<\infty. $$
 Since $\L_\Ga^\theta$ and $\L_{\Ga_H}^\theta$ are asymptotic cones of $\Ga$ and $\Ga_H$ respectively, this implies that
  $\L_\Ga^\theta=\L_{\Ga_H}^\theta$. Therefore
  $\L_\Ga^\theta= p_\theta(\L_{\Ga}^{\op{Jor}})$. \end{proof}

The set of $\theta$-Anosov representations is open in $\Hom(\Ga, G)$ \cite{GW}, and hence
if $\Ga$ is $\theta$-Anosov and $\sigma_n \to \id_\Ga$, then $\sigma_n$ is a $\theta$-Anosov (in particular discrete) representation for all sufficiently large $n$.
Hence we get the following lower semicontinutity from Proposition \ref{LC} and Lemma \ref{Jordan}
without any assumption on the Zariski closure of $\Ga$ and the discreteness of $\sigma_n$:
\begin{prop}\label{LCA}
   Let $\G$ be a $\theta$-Anosov subgroup of $G$.
      If $\sigma_n\to \id_\Ga$  in $\Hom(\Ga, G)$, then  any accumulation point $\L$ of the sequence $\L^\theta_{\sigma_n(\G)}$   in $\mathsf{C}(\fa^+)$ contains $\L^\theta_\G$.
\end{prop}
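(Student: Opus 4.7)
The plan is to derive Proposition~\ref{LCA} as a direct consequence of two ingredients already established above: the general lower semicontinuity Proposition~\ref{LC}, whose conclusion involves $p_\theta(\L^{\Jor}_\G)$ rather than $\L^\theta_\G$, and Lemma~\ref{Jordan}, which identifies these two cones for $\theta$-Anosov subgroups. No genuinely new argument is needed.

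First I would verify that the hypotheses of Proposition~\ref{LC} are satisfied for our sequence. Since $\G$ is $\theta$-Anosov, it is finitely generated and discrete. As the set of $\theta$-Anosov representations is open in $\Hom(\G,G)$, for all sufficiently large $n$ the representation $\sigma_n$ is also $\theta$-Anosov, and in particular is a discrete representation with finite kernel. After discarding a finite initial segment of the sequence, we may therefore assume that every $\sigma_n$ is discrete and converges to $\id_\G$, so that Proposition~\ref{LC} applies to $\{\sigma_n\}$.

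Next, I would invoke Proposition~\ref{LC} to conclude that any accumulation point $\L$ of $\{\L^\theta_{\sigma_n(\G)}\}$ in $\mathsf C(\fa^+)$ contains $p_\theta(\L^{\Jor}_\G)$. The key observation is that this ``Jordan'' version of lower semicontinuity is the part of Proposition~\ref{LC} that was proved without any reductive Zariski closure hypothesis on $\G$; only the identification of $p_\theta(\L^{\Jor}_\G)$ with $\L^\theta_\G$ required that hypothesis in the original statement. In our setting, Lemma~\ref{Jordan} supplies exactly the replacement identity $\L^\theta_\G = p_\theta(\L^{\Jor}_\G)$ for any $\theta$-Anosov subgroup, so $\L \supseteq \L^\theta_\G$, as desired.

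There is essentially no remaining obstacle: everything delicate has been absorbed into Lemma~\ref{Jordan}. If one were to single out the conceptual heart of the argument, it is precisely that lemma, whose proof relies on the Tsouvalas estimate \eqref{tt} comparing $\mu(\ga)$ with the Cartan projection $\mu(\ga_H)$ of the Levi component $\ga_H$, thereby reducing the general (possibly non-reductive) Anosov case to the Zariski-dense reductive setting of Benoist's Theorem~\ref{be0}.
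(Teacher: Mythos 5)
Your proposal is correct and is essentially identical to the paper's own argument: the paper likewise uses openness of the set of $\theta$-Anosov representations to guarantee discreteness of $\sigma_n$ for large $n$, invokes the Jordan-projection part of Proposition~\ref{LC} (which needs no reductivity hypothesis), and then applies Lemma~\ref{Jordan} to identify $p_\theta(\L^{\Jor}_\G)$ with $\L^\theta_\G$. Nothing further is needed.
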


We will need the following corollary of the lower semicontinuity of the limit cones:
\begin{cor} \label{co} Let $\Ga$ be a Zariski dense $\theta$-Anosov subgroup of $G$, and let $\sigma_n \to \id_\Ga$ in $\Hom(\Ga, G)$.
For any closed  cone $\cal C$ contained in $\inte \L_\Ga^\theta\cup\{0\}$,
we have 
$$\cal C\subset\inte \L_{\sigma_n(\Ga)}^\theta\cup\{0\}\quad\text{ for all large $n$.}$$
    \end{cor}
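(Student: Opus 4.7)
The plan is to argue by contradiction, using the lower semicontinuity from Proposition \ref{LCA} together with a Hausdorff-continuity principle for closed convex cones. Suppose the conclusion fails; after passing to a subsequence, I obtain unit vectors $v_n \in \C$ (in the unit sphere of $\fa_\theta$) with $v_n \notin \inte \L^\theta_{\sigma_n(\Ga)}$ for every $n$. By compactness of $\C$ intersected with the unit sphere of $\fa_\theta$, and of $\mathsf C(\fa_\theta^+)$, I extract a further subsequence such that $v_n \to v$ (a unit vector in $\C$) and $\L^\theta_{\sigma_n(\Ga)} \to \L'$ in the Hausdorff cone topology. Since $\C \setminus \{0\} \subset \inte \L^\theta_\Ga$, the vector $v$ lies in $\inte \L^\theta_\Ga$; and since $\Ga$ is Zariski dense in $G$, Theorem \ref{be0} makes $\inte \L^\theta_\Ga$ an honest open subset of $\fa_\theta$. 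Proposition \ref{LCA} then gives $\L' \supset \L^\theta_\Ga$, so the open neighborhood $\inte \L^\theta_\Ga$ of $v$ sits inside $\L'$, and hence $v \in \inte \L'$.

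The contradiction is obtained from the following fact for closed convex cones in $\fa_\theta$: if $\L_n \to \L'$ in the Hausdorff cone topology and $v \in \inte \L'$, then $v \in \inte \L_n$ for all large $n$. The proof is a duality argument: $v \in \inte \L'$ is equivalent to $\langle u, v\rangle > 0$ for every nonzero $u \in (\L')^*$, and by compactness of the unit sphere in $(\L')^*$ this positivity holds with a uniform constant, which transfers to $\L_n^*$ via $\L_n^* \to (\L')^*$. Combined with $v_n \to v$ and openness of $\inte \L_n$, this yields $v_n \in \inte \L_n$ for large $n$, contradicting the choice of $v_n$.

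The main obstacle is that $\sigma_n(\Ga)$ need not be Zariski dense in $G$, so the cones $\L^\theta_{\sigma_n(\Ga)}$ are not automatically convex and the duality argument cannot be applied verbatim. To handle this I would invoke Lemma \ref{Jordan}: replacing $\sigma_n(\Ga)$ by its projection $(\sigma_n(\Ga))_{H_n}$ to the reductive part $H_n$ of its Zariski closure, which \emph{is} Zariski dense in $H_n$, so that Theorem \ref{be0} provides the convexity of its limit cone in $\fa_{H_n}$. Transferring this convexity back to $\L^\theta_{\sigma_n(\Ga)}$ via Lemma \ref{hg} and the $\theta$-Anosov wall-avoidance \eqref{kan} should produce the convex structure in a neighborhood of $v$ that is required for the duality argument to close the contradiction.
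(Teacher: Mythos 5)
Your overall strategy is the same as the paper's: argue by contradiction, combine the lower semicontinuity of Proposition \ref{LCA} with convexity of the perturbed cones, and derive a contradiction from a separating functional (your dual-cone formulation is just a repackaging of the supporting-hyperplane argument the paper uses). The one place where you diverge is precisely the place where your argument has a genuine gap: how to get convexity of $\L^\theta_{\sigma_n(\Ga)}$ when $\sigma_n(\Ga)$ is not assumed Zariski dense.

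Your proposed fix via Lemma \ref{Jordan} and Lemma \ref{hg} does not work as stated. Lemma \ref{Jordan} gives $\L^\theta_{\sigma_n(\Ga)}=\L^\theta_{(\sigma_n(\Ga))_{H_n}}$ with the projection Zariski dense in $H_n$, and Theorem \ref{be0} then gives convexity of the limit cone \emph{inside} $\fa_{H_n}$. But Lemma \ref{hg} transfers this to $\fa^+$ by folding: $\L_{\Ga,G}=\bigcup_{w\in\W}\{w^{-1}v: v\in\L_{\Ga,H}\cap w\fa^+\}$, and this folding destroys convexity in general. The paper's own Section \ref{fa} is built on exactly this phenomenon: a Zariski dense Borel-Anosov subgroup of $\SL(3,\R)<\SL(4,\R)$ is $\{\alpha_1,\alpha_3\}$-Anosov with reductive Zariski closure, yet its limit cone in $\fa^+$ is the non-convex union of pieces of the folded plane (Lemma \ref{ta}). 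Applying $p_\theta$ to such a folded union need not restore convexity, and ``convexity in a neighborhood of $v$'' is not available either, since $v$ can lie near the image of a fold. So the duality step, which genuinely requires $\L^\theta_{\sigma_n(\Ga)}$ to be convex (for a non-convex cone the dual cone does not detect membership in the interior), is not justified by your reduction.

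The correct and much simpler repair, which is what the paper does, is to observe that the set of representations $\sigma$ with $\sigma(\Ga)$ Zariski dense in $G$ is Zariski open in $\Hom(\Ga,G)$ (\cite[Proposition 8.2]{AB}) and contains $\id_\Ga$ by hypothesis; hence $\sigma_n(\Ga)$ is Zariski dense for all large $n$, and Theorem \ref{be0} gives convexity of $\L^\theta_{\sigma_n(\Ga)}$ directly. With that substitution the rest of your argument goes through.
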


\begin{proof} 
Since the set of Zariski dense representations of $\Ga$ is Zariski open in $\Hom(\Ga, G)$ \cite[Proposition 8.2]{AB} and non-empty since $\Ga$ is Zariski dense,
we may assume without loss of generality that for all $n\ge 1$, $\sigma_n(\Ga)$ is Zariski dense in $G$ , and 
hence $\L_{\sigma_n(\Ga)}^{\theta}$ is a convex cone with non-empty interior by Theorem \ref{be0}.

Let $\cal C$ be a closed cone contained in $\inte \L_\Ga^\theta\cup\{0\}$.
Let $Z=\cal C\cap \fa^1$.
Suppose that for some sequence $ n_k \in \mathbb{N} $ going to $\infty$, we have
    a sequence of vectors $v_k\in  Z-  \operatorname{int} \mathcal{L}_{\sigma_{n_k}(\Gamma)}^{\theta} $. Since $ \mathcal{L}^\theta_{\sigma_{n_k}(\Gamma)} $ is convex, there exists a closed half-space $ H_k \subset \mathfrak{a}_\theta $ whose boundary contains $v_k\in Z $ and  $ \mathcal{L}^\theta_{\sigma_{n_k}(\Gamma)} \subset H_k$.

 Since $Z$ is compact, there exists $\e>0$ such that the $2\e$-neighborhood of $Z$ is contained in $\inte\L_\Ga^\theta$. 
Let $B_k=B(v_k, 2\e)\subset \inte\L_\Ga^\theta$ denote the ball centered at $v_k$ of radius $2\e>0$. Since $v_k$ lies in the boundary of $H_k$, the $\epsilon$-neighborhood of $H_k$ does not contain $B$ and hence  the $\epsilon$-neighborhood of $\mathcal{L}_{\sigma_{n_k}(\Gamma)} \cap \mathfrak{a}^1 $ does not contain $B_k$ and, in particular,  it does not contain $\L_{\Gamma}^\theta$.
Therefore, for any limit $\L$ of the sequence $\mathcal{L}^\theta_{\sigma_{n_k}(\Gamma)}$ in $\mathsf C(\fa_\theta^+)$, the $\epsilon$-neighborhood of $\L \cap \fa^1$ does not contain $\L_\Gamma^\theta \cap \fa^1$. This contradicts Proposition \ref{LCA}. Hence $Z\subset  \operatorname{int} \mathcal{L}_{\sigma_{n}(\Gamma)}^{\theta}  $ and consequently  $\cal C \subset  \operatorname{int} \mathcal{L}_{\sigma_{n}(\Gamma)}^{\theta} \cup\{0\} $ 
 for all large $n$.
\end{proof}

\section{Upper semicontinuity of limit cones} 
Let $G$ be a connected semisimple real algebraic group.
In this section, we prove the upper semicontinuity of the limit cones
of $\theta$-Anosov and $\theta$-convex subgroups of $G$.

The main tool of our proof is the {\em local-to-global principle for Morse quasigeodesics}
due to Kapovich-Leeb-Porti  (\cite[Theorem 1.1]{KLP2}, see Theorem \ref{thm:LTG}). 
In order to explain this principle, we need to
recall some terminology. Recall the notation $\fa_\theta=\bigcap_{\alpha\in\Pi-\theta} \ker\alpha$ and
$\cal W_\theta$ from \eqref{at0}.  
Let $M_\theta$ denote the centralizer of $\exp \fa_\theta$ in $K$.

The notion of a $\theta$-admissible cone is crucial:
\begin{Def} Let $\theta=\i(\theta)$. A closed cone $\C\subset \fa^+$ is called {\em $\theta$-admissible} if
\begin{enumerate}
    \item $\C$ is $\i$-invariant: $\i(\C)=\C$;
    \item $\C$ is $\theta$-convex, i.e., $\cal W_\theta \C$ is convex;
    \item $\C\cap \left(\bigcup_{\alpha\in \theta} \ker \alpha\right) =\{0\}$.
\end{enumerate}
\end{Def}

\subsection*{Local-to-global principle} Let $\C$ be a
 $\theta$-admissible cone of $\fa^+$. We denote by $X$ the  Riemannian symmetric space $G/K$ equipped with the  metric $d$ induced from $\langle\cdot,\cdot\rangle$. 
The notation $d(\cdot,\cdot)$ will denote both the left $G$-invariant Riemannian distance function on $X$. Let $o=[K]\in X$.
 Let $x $ and $y$ be a $\C$-regular pair in $X$, that is,
$$\mu(g_1^{-1}g_2) \in \C$$ for any $g_1,g_2\in G$ such that $x = g_1 o$ and $y = g_2 o$.
The {\em $\C$-cone} emanating from $x$ and passing through $y$ is defined as
 $$V_\C(x,y) \coloneqq g M_\theta (\exp \C)o$$ where $g \in G$ is any element such that $x = go$ and $y \in g (\exp \C) o$.  Noting that $V_\C (x, y)\cap g A o= g\exp (\W_\theta. \C)o$,
 the convexity of the cone $V_\C(x,y)$ implies the $\theta$-convexity of $\C$.
The main point of a $\theta$-admissible condition on the cone $\C$ is that for a
$\theta$-admissible cone $\C$,  all $\C$-cones are convex subsets of $X$ \cite{KLP}.

The {\em $\C$-diamond} connecting $x$ to $y$ is defined as 
\be\label{dia}
 \Diamond_\C(x,y) = V_\C(x,y) \cap V_\C(y,x).
\ee

 Cones and diamonds can be viewed as higher rank generalizations of geodesic rays and segments in the hyperbolic space.

\begin{Def}
   Let  $I\subset \R$ be an interval, and
 $L\ge 1$ and $D\ge  0$ be constants. 
\begin{enumerate}
    \item 
A map $f: I \to X$ is called {\em $(\C,D,L)$-Morse}, if $f$ is an $L$-quasi-isometric embedding\footnote{That is, $L^{-1}|t-s|-L \le d(f(t), f(s))\le L |t-s| +L $ for all $t,s\in I$.}
and for all $s\le t$ in $I$, the image 
$f([s,t])$ lies in the $D$-neighborhood of 
 some $\C$-diamond $\Diamond_{\C} (x,y)$ in $X$, where the tips $x$ and $y$ are a $\C$-regular pair 
 such that $d(x, f(s))\le D$ and $d(y, f(t))\le D$.

\item 
For $S\ge 1$, a map $f: I\to X$ is called {\em $(\C,D,L,S)$-local Morse} if for all $s,t\in I$ with $0\le t-s \le S$, the restriction map $f\vert_{[s,t]}$ is $(\C,D,L)$-Morse.

\item More generally, for a geodesic Gromov hyperbolic space $Y$, a map $f: Y \to X$ is called $(\C,D,L)$-Morse (resp. $(\C,D,L,S)$-local Morse) if the restriction of $f$ to each unit-speed parametrized geodesic in $Y$ is $(\C,D,L)$-Morse (resp. $(\C,D,L,S)$-local Morse).

\end{enumerate}
\end{Def}

We remark that the Morse datum is invariant under post composition by any element $g\in G$, considered as an isometry of $X$.
More precisely, if $f: Y \to X$ is $(\C,D,L)$-Morse (resp. $(\C,D,L,S)$-local Morse), then so is $g\circ f$ for any $g\in G$.

Finally, we can state the local-to-global principle for Morse quasigeodesics:

\begin{theorem} [{Kapovich-Leeb-Porti, \cite[Theorem 1.1]{KLP2}}] \label{thm:LTG}
Let $\theta=\i(\theta)$. Let $\C,\C' \subset\fa^+$ be $\theta$-admissible closed cones with nonempty interior such that $\C - \{0\}\subset {\rm int}\, \C'$. For any interval $I\subset \R$ and any constants  $L\ge 1$ and $D\ge 0$, there exist $L',S\ge 1$ and $D'\ge 0$ such that every $(\C,D,L,S)$-local Morse map $$f: I \to X$$ is $(\C',D',L')$-Morse.
\end{theorem}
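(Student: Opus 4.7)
The plan is to adapt the classical local-to-global principle for quasi-geodesics in Gromov hyperbolic spaces to the higher rank setting, replacing geodesic segments by $\C$-diamonds. Two ingredients will drive the argument: the convexity of $\C'$-diamonds in $X$, which is exactly guaranteed by the $\theta$-admissibility of $\C'$; and the ``margin'' in the hypothesis $\C - \{0\} \subset \inte \C'$, which allows $\C$-regular pairs perturbed by bounded errors in the Cartan projection to remain $\C'$-regular. Together these play the role of the fellow-traveling property in Gromov's original argument.

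First, one may reduce to bounded intervals $I$, since the target constants $L'$ and $D'$ should not depend on $|I|$. Given a $(\C, D, L, S)$-local Morse map $f: I \to X$, I would partition $I$ into consecutive subintervals of length at most $S$ and apply the local hypothesis on each piece to obtain $\C$-diamonds $\Diamond_\C(x_i, y_i)$ whose $D$-neighborhoods contain the corresponding images $f([s_i, t_i])$. Consecutive tips $y_i$ and $x_{i+1}$ then lie within a uniformly bounded distance, both being close to $f(t_i)$. The goal is to show that all of these overlapping diamonds sit inside a single $\C'$-diamond $\Diamond_{\C'}(x_0, y_N)$ up to uniformly bounded Hausdorff distance, from which $(\C', D', L')$-Morseness with $L'$ absorbing the local quasi-isometric constants follows.

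The main obstacle is a chaining lemma for Cartan projections along this sequence of diamonds, namely an estimate of the form
\begin{equation*}
\mu(g_1 g_2) \in \mu(g_1) + \mu(g_2) + B,
\end{equation*}
where $B \subset \fa$ is a uniformly bounded set whenever $\mu(g_1), \mu(g_2)$ are sufficiently deep inside $\C$ (of length at least $S$). A priori the product can have a Cartan projection that drifts transversely to $\C'$; the opposition symmetry $\i(\C)=\C$ is needed to control the interaction between the outgoing direction of the first diamond at $y_i$ and the incoming direction of the second at $x_{i+1}$, while the strict interior containment $\C - \{0\} \subset \inte \C'$ absorbs this drift into $\C'$. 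Proving the estimate quantitatively and uniformly is the crux: one reasonable approach is to realize the local analysis inside a rank-one totally geodesic subspace of $X$ tangent to a direction in $\inte \C$, then use compactness of the partial flag variety $G/P_\theta$ to propagate control along the entire chain. Once the chaining lemma is in place, iterating it and choosing $S$ large enough that the bounded overlaps dominate all accumulated errors yields the global Morse datum $(L', D')$.
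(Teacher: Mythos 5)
First, note that the paper does not prove this statement at all: Theorem \ref{thm:LTG} is quoted verbatim from Kapovich--Leeb--Porti \cite[Theorem 1.1]{KLP2} and used as a black box, so there is no internal proof to compare against. What you have written is therefore being measured against the original KLP argument, and as a proof it has a genuine gap. Your entire argument funnels into the ``chaining lemma'' $\mu(g_1g_2)\in\mu(g_1)+\mu(g_2)+B$, which you explicitly defer (``Proving the estimate quantitatively and uniformly is the crux''). But as stated the lemma is false: depth of $\mu(g_1)$ and $\mu(g_2)$ inside $\C$ is not sufficient (take $g_2=g_1^{-1}$, which can be arranged to have Cartan projection deep in an $\i$-invariant cone while $\mu(g_1g_2)=0$). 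Near-additivity of Cartan projections requires a quantified transversality condition between the flag directions attached to $g_1$ and $g_2$, and establishing that this transversality holds uniformly along the whole chain of diamonds --- i.e., that the local Morse data force consecutive diamonds to ``nest'' coherently rather than backtrack --- is precisely the content of the KLP theorem. Restating it as an unproven lemma does not constitute a proof.

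Two further structural points would need repair even if the chaining lemma were granted. The proposed ``reduction to bounded intervals'' is backwards: the difficulty is exactly that $L'$, $D'$ must be uniform over intervals of arbitrary length, so one cannot reduce to the bounded case; the actual argument must propagate control inductively with constants independent of the number of subintervals, which is where the choice of $S$ large interacts delicately with $D$, $L$, and the gap between $\C$ and $\inte\C'$. And the suggestion to localize inside a rank-one totally geodesic subspace tangent to a direction in $\inte\C$ does not match the geometry: the diamonds $\Diamond_{\C}(x,y)$ are genuinely higher-rank convex sets (unions of flats through parallel sets), and the KLP proof works with nested cones, the convexity of parallel sets, and a Morse lemma for uniformly regular quasigeodesics rather than with rank-one reductions. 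In short, the proposal is a reasonable description of the shape of the theorem, but the step that carries all the difficulty is missing and, in the form written, not correct.
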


\subsection*{Upper semicontinuity of limit cone} In order to apply \Cref{thm:LTG} in our setting, we will need the following lemma on the existence of $\theta$-admissible cones:
\begin{lemma}\label{con1} Let $\theta=\i(\theta)$. Let $\mathcal D$ be  a $\theta$-admissible cone of $\fa^+$. Then for any open cone $\C_0$ in $\fa^+$ containing $\mathcal D-\{0\}$,   there exists a $\theta$-admissible closed cone $\C$ such that 
$$\cal D-\{0\} \subset \op{int} \C \subset \C \subset \C_0 \cup\{0\}.$$
\end{lemma}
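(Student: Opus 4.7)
The plan is to thicken the $\W_\theta$-orbit $C := \W_\theta \mathcal{D}$ slightly inside $\fa$ in a way that remains invariant under both $\W_\theta$ and $\i$, and then intersect back with $\fa^+$. Because $\mathcal{D}$ is $\theta$-admissible and $\theta = \i(\theta)$ forces $\i \W_\theta \i^{-1} = \W_\theta$, the set $C$ is a closed convex cone in $\fa$ invariant under both $\W_\theta$ and $\i$.

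The key preliminary observation is that for every $\alpha \in \theta$ and $w \in \W_\theta$, the linear form $w^{-1}\alpha$ is a positive root of the form $\alpha + \sum_{\beta \in \Pi - \theta} c_\beta \beta$ with $c_\beta \ge 0$; indeed, since $\W_\theta$ fixes $\fa_\theta$ pointwise, $w^{-1}\alpha$ and $\alpha$ restrict identically to $\fa_\theta$, which pins the coefficient of $\alpha$ to $+1$. Consequently, for $v = wu \in C$ with $u \in \mathcal{D} - \{0\}$ we have $\alpha(v) = (w^{-1}\alpha)(u) \ge \alpha(u) > 0$; so $\alpha > 0$ on $C - \{0\}$ for every $\alpha \in \theta$. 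The $\W_\theta$-average
\[ \tilde{\phi} := \frac{1}{|\W_\theta|} \sum_{w \in \W_\theta} \biggl( \sum_{\alpha \in \theta} \alpha \biggr) \circ w^{-1} \]
is then a $\W_\theta$- and $\i$-invariant linear form strictly positive on $C - \{0\}$ (its $\i$-invariance uses $\i(\theta) = \theta$ and the fact that $\i$ normalizes $\W_\theta$).

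Next, take the compact convex base $B \cap C$, where $B := \{v \in \fa : \tilde{\phi}(v) = 1\}$ is a $\W_\theta$- and $\i$-invariant affine hyperplane, and let $K_\epsilon$ be the closed $\epsilon$-neighborhood of $B \cap C$ inside $B$. Then $C'_\epsilon := \R_{\ge 0} \cdot K_\epsilon$ is a closed convex $\W_\theta$- and $\i$-invariant cone enlarging $C$. Set $\mathcal{C} := C'_\epsilon \cap \fa^+$. For $\epsilon > 0$ sufficiently small, compactness of $B \cap C$ yields (a) $C - \{0\} \subset \inte_\fa C'_\epsilon$, (b) $\alpha > 0$ on $C'_\epsilon - \{0\}$ for every $\alpha \in \theta$, and (c) $C'_\epsilon \cap \fa^+ \subset \mathcal{C}_0 \cup \{0\}$. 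Combining (b) with the fact that the fundamental $\W_\theta$-chamber $\{v : \beta(v) \ge 0 \text{ for all } \beta \in \Pi - \theta\}$ is a fundamental domain for the $\W_\theta$-action on $\fa$ gives $\W_\theta \mathcal{C} = C'_\epsilon$, which is convex. Thus $\mathcal{C}$ is $\theta$-admissible; by (a) it contains $\mathcal{D} - \{0\}$ in its interior, and by (c) it sits inside $\mathcal{C}_0 \cup \{0\}$.

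The main technical point is (c): one must ensure the thickening $C'_\epsilon$ does not spill out of $\mathcal{C}_0$ along the walls $\ker \beta$ ($\beta \in \Pi - \theta$), where $\mathcal{D}$ may meet $\partial \fa^+$. This is handled by the standard distance-nonincreasing property of folding into a fundamental $\W_\theta$-chamber: given $k \in K_\epsilon \cap \fa^+$ with nearest point $c \in B \cap C$, writing $c = w c'$ for $w \in \W_\theta$ with $c'$ in the fundamental $\W_\theta$-chamber gives $c' \in B \cap \mathcal{D}$ (by (b)) and $d(k, c') \le d(k, c) \le \epsilon$. Compactness of $B \cap \mathcal{D}$ and openness of $\mathcal{C}_0$ in $\fa^+$ then deliver (c) for $\epsilon$ small enough.
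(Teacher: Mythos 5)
Your proof is correct, and while it follows the same broad template as the paper's (thicken a convex invariant cone, then intersect with $\fa^+$), the implementation is genuinely different at both technical pinch points. The paper thickens $\cal D$ itself on the unit sphere ($\cal D_\e=\br_+\cal N_\e(\fa^1\cap\cal D)$) and manufactures wall-avoidance and $\W_\theta$-convexity by intersecting with $H\cap\i(H)$, where $H$ is built from separating linear forms $h_\alpha$ (one per $\alpha\in\theta$, pushed around by all of $\W_\theta$) chosen to separate $\ker\alpha\cap\fa^+$ from $\C_0\cup\W_\theta\cal D$. You instead thicken the full orbit $C=\W_\theta\cal D$ inside the invariant affine cross-section $\{\tilde\phi=1\}$, which makes convexity and $(\W_\theta,\i)$-invariance of the enlarged cone immediate (an $\e$-neighborhood of a convex set in an affine hyperplane is convex) and, in particular, sidesteps the paper's somewhat breezy assertion that the cone over a spherical $\e$-neighborhood of a convex cone is again convex. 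You replace the separating-hyperplane step by the root-combinatorial identity $\alpha\circ\op{Ad}_w=\alpha+\sum_{\beta\in\Pi-\theta}c_\beta\beta$ with $c_\beta\ge0$, which cleanly yields $\alpha>0$ on $\W_\theta\cal D-\{0\}$ and hence on the thickened cone for small $\e$. The price of thickening the whole orbit is that containment in $\C_0$ is no longer automatic, and your folding argument (the chamber representative does not increase distance to points of the fundamental $\W_\theta$-chamber, so $C'_\e\cap\fa^+$ lies in the $\e$-neighborhood of $B\cap\cal D$) is exactly the right fix; the paper avoids this issue because its $\C\subset\cal D_\e\subset\C_0\cup\{0\}$ holds by fiat. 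One cosmetic remark: the parenthetical ``(by (b))'' justifying $c'\in B\cap\cal D$ is better attributed to the positivity of $\theta$-roots on $C-\{0\}$ from your preliminary observation (equivalently, to the fact that $C\cap\fa^+=\cal D$ because $\fa^+$ is a fundamental domain for $\W$), but the claim and the rest of the argument are sound.
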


\begin{proof} 
We observe that  $\cal W_\theta {} \fa^+ $ is equal to the union of all Weyl chambers containing
  $\fa_\theta^+$, and hence is a convex cone.
  
Let $\alpha\in \theta$.  Since  $\ker \alpha \cap \fa^+$ is contained in the boundary of $\cal W_\theta {} \fa^+$, it follows from the convexity of $\cal W_\theta{} \fa^+$ that
$\cal W_\theta {} \fa^+$ is contained in the half space $\{\alpha\ge 0\}.$ Hence both $\inte \fa^+$ and  $\cal W_\theta {} \cal D -\{0\}$ are contained in the open half-space $\{\alpha>0\}$.  Since $\cal D$ is a closed convex cone disjoint from $\ker \alpha-\{0\}$, we may assume without loss of generality that the closure of $\C_0$
is   disjoint from $\ker \alpha-\{0\}$.
Therefore
we can find a linear form $h_\alpha\in \fa^*$ such that
$$(\ker \alpha\cap \fa^+ ) -\{0\} \subset \{h_\alpha<0\} \quad\text{and}\quad 
(\cal C_0\cup  \cal W_\theta {} \cal D) -\{0\}\subset 
\{h_\alpha >0\} .$$

Now set 
\be\label{hhh} H\coloneqq\bigcap_{\alpha\in \theta, w\in \cal W_\theta} \{h_\alpha\circ \op{Ad}_{w} \ge 0\},\ee  which is clearly a $\cal W_\theta$-invariant convex cone. By our choice of $h_\alpha$, the interior
$\inte H$ contains $\cal D-\{0\}$. Since $\theta=\i(\theta)$, we have that $\i(H)$ is also a $\cal W_\theta$-invariant convex cone whose interior contains $\cal D-\{0\}=\i(\cal D)-\{0\}$.

Since $\cal D-\{0\}\subset \C_0$, we can find $\e>0$ such that
the cone $\cal D_\e:=\br_+(\cal N_\e(\fa^1\cap \cal D))$ is contained in $ \C_0\cup\{0\}$ where
 $\fa^1=\{\|w\|=1\}$ is the unit sphere and 
$\cal N_\e  (\fa^1\cap \cal D)=\{w\in \fa^1: \|w- (\fa^1\cap \cal D) \|\le \e\}$.

Define \[\C\coloneqq 
\cal D_\e \cap \fa^+\cap  H \cap \i(H).\]
By construction, we have $
 \C \cap \left(\bigcup_{\alpha \in \theta} \ker \alpha \right)= \{0\}$.
Moreover, $\C$ is a  closed cone in $\fa^+$ whose interior contains $\cal D-\{0\}$.
 Since $\i(\cal D)=\cal D$ and $\i$ preserves the norm on $\fa$, we
have $\i(\cal C)=\cal C$. 
Since $\W_\theta$ acts by isometries on $\fa^1$, we have 
$\W_\theta(\cal D_\e)
=\br_+ \cal N_\e (\cal W_\theta \fa^1\cap \cal W_\theta \cal D)$. By the hypothesis that $\W_\theta\cal D$ is convex,  $\W_\theta(\cal D_\e)$ is convex as well.
Since  $$\cal W_\theta {} \C = \cal W_\theta {}  (\cal D_\e ) \cap \cal W_\theta (\fa^+) \cap H \cap \i(H),$$ it follows that $\cal W_\theta{}  \C $ is convex. 
    Therefore $\C$ is $\theta$-admissible.   
\end{proof}

The next proposition is the main ingredient of the upper semicontinuity of limit cones, which does not yet require the $\theta$-convexity of $\Ga$:
 \begin{prop}\label{ucp} Let $\theta=\i(\theta)$.
     Let $\G$ be a  $\theta$-Anosov  subgroup of $G$. Let $\cal D\subset \fa^+$ be a $\theta$-admissible cone containing $\L_\Ga$. Then
     for any open cone $\C_0\subset \fa^+$ containing
     $\cal D-\{0\}$,
      there exists an open neighborhood $\cal O\subset \Hom(\Ga, G) $ of $\id_\Ga$ such that for all $\sigma\in \cal O$, we have
      $$\L_{\sigma(\Ga)}-\{0\} \subset \C_0.$$
 \end{prop}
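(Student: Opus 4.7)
The strategy is to apply the local-to-global principle (Theorem \ref{thm:LTG}) to the orbit maps of perturbations of $\id_\Ga$, after interposing two admissible cones between $\cal D$ and $\C_0$.

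\emph{Step 1: Sandwich $\cal D$ between two admissible cones inside $\C_0$.} Applying Lemma \ref{con1} first with $(\cal D, \C_0)$ yields a $\theta$-admissible closed cone $\C_2$ with $\cal D-\{0\}\subset \inte \C_2 \subset \C_2 \subset \C_0\cup\{0\}$. Applying it a second time with $(\cal D, \inte\C_2\cup\{0\})$ produces a $\theta$-admissible closed cone $\C_1$ satisfying
$$\cal D-\{0\}\subset \inte \C_1,\qquad \C_1-\{0\}\subset \inte \C_2,\qquad \C_2\subset \C_0\cup\{0\}.$$

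\emph{Step 2: The orbit map of $\Ga$ is globally Morse with respect to $\C_1$.} Since $\Ga$ is $\theta$-Anosov and therefore word-hyperbolic, and since $\L_\Ga\subset \cal D \subset \inte\C_1$ with $\C_1$ $\theta$-admissible, the Morse characterization of Anosov subgroups of Kapovich--Leeb--Porti implies that the orbit map $f\colon \Ga \to X$, $\ga\mapsto \ga o$, extends to the Cayley graph of $\Ga$ as a $(\C_1,D_0,L_0)$-Morse map for some constants $D_0\ge 0$, $L_0\ge 1$. Apply Theorem \ref{thm:LTG} with the pair $(\C_1,\C_2)$ and the constants $(D_0,L_0)$ to produce scales $S\ge 1$, $L'\ge 1$, $D'\ge 0$ such that every $(\C_1,D_0,L_0,S)$-local Morse map into $X$ is automatically $(\C_2,D',L')$-Morse.

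\emph{Step 3: Local Morseness persists under small perturbations.} Fix a finite generating set $T$ of $\Ga$, and let $B_S\subset \Ga$ be the ball of radius $\lceil S\rceil+1$ in the Cayley graph. By Definition \ref{top} together with left $G$-invariance and $K$-bi-invariance of $d$, for every $\e>0$ there exists an open neighborhood $\cal O$ of $\id_\Ga$ in $\Hom(\Ga,G)$ such that $d(\sigma(\ga),\ga)<\e$ for every $\sigma\in\cal O$ and every $\ga\in B_S$. Since the Morse datum only sees finite subsegments of length $\le S$, and since the defining ingredients --- the quasi-isometric embedding constants and the $D$-fellow-traveling of subsegments by $\C_1$-diamonds --- are stable under uniformly small perturbations of $f|_{B_S}$, shrinking $\cal O$ if necessary ensures that for every $\sigma\in \cal O$, the orbit map $f_\sigma(\ga)=\sigma(\ga)o$ is $(\C_1,D_0+1,L_0+1,S)$-local Morse on the Cayley graph of $\Ga$. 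Here we use that for $\sigma\in\cal O$ close enough to $\id_\Ga$, the quasi-isometric embedding constants of $f_\sigma$ on any scale $\le S$ are arbitrarily close to those of $f$.

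\emph{Step 4: Conclusion via local-to-global and passage to the limit cone.} By Step 2 applied to the local Morse map $f_\sigma$, the orbit map $f_\sigma$ is $(\C_2,D',L')$-Morse globally. Taking a geodesic in the Cayley graph from $e$ to $\ga$, this yields a constant $D''$ (independent of $\ga$ and of $\sigma\in\cal O$) with $\mu(\sigma(\ga))\in N_{D''}(\C_2)$ for every $\ga\in\Ga$. Rescaling $t_i\mu(\sigma(\ga_i))$ with $t_i\to 0$ shows that every accumulation direction lies in $\C_2\subset \C_0\cup\{0\}$, i.e., $\L_{\sigma(\Ga)}-\{0\}\subset \C_0$, as required.

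\emph{Main difficulty.} The substantive steps are Step 2 (invoking the Morse characterization of $\theta$-Anosov subgroups to upgrade $\L_\Ga\subset \inte\C_1$ to a global Morse property of the orbit map with respect to $\C_1$) and Step 3 (ensuring that the local Morse property of $f_\sigma$ with the \emph{same} cone $\C_1$ is preserved for $\sigma$ sufficiently close to $\id_\Ga$). Once these are in place, Theorem \ref{thm:LTG} mechanically upgrades local Morseness at scale $S$ to global Morseness with respect to $\C_2$, which controls the asymptotic cone.
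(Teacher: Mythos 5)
Your proposal is correct and follows essentially the same route as the paper: sandwich two $\theta$-admissible cones between $\cal D$ and $\C_0$ via Lemma \ref{con1}, use the Morse property of the Anosov orbit map, show local Morseness persists under perturbation on a ball of radius $S$, and invoke the Kapovich--Leeb--Porti local-to-global principle to control $\mu(\sigma(\Ga))$ up to bounded error. The only slip is bookkeeping: in Step 2 you request the local-to-global constants for $(\C_1,D_0,L_0)$, but Step 3 only yields $(\C_1,D_0+1,L_0+1,S)$-local Morse maps, so Theorem \ref{thm:LTG} should be applied from the outset to the perturbed data $(D_0+1,L_0+1)$, exactly as the paper does.
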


\begin{proof} 
   By Lemma \ref{con1}, we can choose $\theta$-admissible closed cones 
    $\C$ and $\C'$ contained in $ \C_0\cup\{0\}$
    so that 
    $$\cal D-\{0\}\subset \inte \C\subset \C \subset \inte \C'\cup\{0\} .$$
    
    Consider a left-invariant word metric $|\cdot |$ on $\G$ with respect to a fixed
    symmetric finite generating set for $\G$. Since $\G$ is $\theta$-Anosov, there exist constants $L, D\ge 1$ such that the orbit map 
    $$\text{$\G \to X$, $\g\mapsto \g o$ }$$ is a $(\C, D,L)$-Morse embedding.
    Let $L',S\ge 1$ $D'\ge 1$ be as in Theorem \ref{thm:LTG} corresponding to the data $\C,\C',D+1,L+1$.

Denote by $\mathfrak o_n:\Ga\to X$ the orbit map of $\Ga$ via $\sigma_n$:
    \[
     \g \mapsto \sigma_n(\g) o.
    \]

\medskip
\noindent{\bf Claim A.} For all sufficiently large $n$, the orbit map $\mathfrak o_n$ is $(\C',D',L')$-Morse.
    
 \begin{proof}[Proof of claim]   Consider the $S$-ball $B_\Ga (e, S)\subset \G$ with respect to the word metric centered at the identity element $e$. Let $f: I\cap\Z \to B_\Ga (e, S)$ be a geodesic, where $I\subset \R$ is an interval. Since  $\sigma_n$ converge to  $\id_\Ga $ in $\Hom(\Ga, G)$, there exists $n_0\ge 1$ such that for all $n\ge n_0$,
    \[
     \max_{\g\in B_\Ga (e, S)}d(\g o, \sigma_n(\g) o) \le \frac{1}{2}.
    \]

Since the orbit map $\ga\mapsto \ga o$ is $(\C, D,L)$-Morse, this implies that the restriction $\mathfrak o_n$ to $B_\Ga (e, S)$ 
    is $(\C, D+1,L+1)$-Morse for all $n\ge n_0$.

    It follows that for all $n\ge n_0$, the map $\mathfrak o_n $ is $(\C, D+1,L+1,S)$-local Morse.
     Indeed, let $s\le t$ be such that $t-s\le S$, and let
     $f: [s,t]\cap\Z \to \G$ be a geodesic. Without loss of generality, we may assume that $t, s\in \Z$.
     Consider the post composition of $f$ with the left multiplication by $f(s)^{-1}$,  which is an isometry on $(\Ga, |\cdot |)$.
    Note that the image of $f(s)^{-1} \circ f$ lies in $B_\Ga (e, S)$. Therefore the map
    \[
     \mathfrak o_n \circ f(s)^{-1} \circ f : [0,S']\cap\Z \to X
    \]
    is $(\C, D+1,L+1)$-Morse. Thus
    $$\mathfrak o_n\circ f = \sigma_n(f(s)) \circ \mathfrak o_n \circ f(s)^{-1} \circ f$$ is $(\C, D+1,L+1)$-Morse. This implies that $\mathfrak o_n $ is $(\C, D+1,L+1,S)$-local Morse. Therefore, by applying Theorem \ref{thm:LTG}, we obtain that the orbit map $\mathfrak o_n$ is $(\C',D',L')$-Morse for all $n\ge n_0$, proving the claim. \end{proof}

Let $B$ denote the ball $B\coloneqq\{g\in G: d(go, o)\le D'\}$. By \cite[Proposition 5.1]{Be1},
there exists a uniform constant $Q>0$ such that
for all $g\in G$ and $b_1, b_2\in B$, 
\be\label{dd} \|\mu(g)-\mu(b_1gb_2)\| \le Q .\ee

 Let $\g\in \Ga$ and $n\ge n_0$. We claim that there exists $u\in \C$, depending on $\g$ and $n$, such that
 \be\label{cl}  \| \mu(\sigma_n(\ga))- u \| \le Q .\ee

By Claim A,
 the point $\sigma_n(\g )o$ lies within the $D'$-neighborhood of some $\C'$-diamond $\Diamond_{\C'}(x_n, y_n)$, where $x_n,y_n$ are a $\cal C'$-regular pair such that
 $d(x_n, o)\le D'$ and $d(y_n, \sigma_n(\ga) o)\le D'$. Let $h_n\in G$ be such that $h_n o$ is an element of the diamond $\Diamond_{\C'}(x_n, y_n)$ such that $d(\sigma_n (\g ) o, h_n o) \le D'$, that is,
  $$ \sigma_n (\ga)=h_n b_n\quad\text{ for some $b_n\in B$.}$$ By the description of the diamond,  
 there exists $g_n\in G$ such that $x_n= g_n o$ and $h_n o = g_n (\exp u) o$ for some $u\in {\cal C}'$. Hence $$\sigma_n (\ga)= h_n b_n = g_n (\exp u ) k_n b_n $$ for some $k_n\in K$.
 
Since $g_n $ and $ k_n b_n$ belong to  $B$, we have by \eqref{dd} that
$$\| \mu(\sigma_n(\ga)) - u\| \le Q,$$
        proving the claim \eqref{cl}. Therefore  for all $n\ge n_0$, $\mu(\sigma_n(\Ga))$ is contained in the $Q$-neighborhood of $\C'$ and hence
$$\L_{\sigma_n(\G)} \subset \C'.$$ Since $\C'-\{0\}$ lies in $\C_0$,
we have $\L_{\sigma_n(\G)} -\{0\}\subset \C_0$ for all $n\ge n_0$. This finishes the proof of the proposition.
\end{proof}

\begin{Def} \label{convex}  
A discrete subgroup $\Ga$ of $G$ is called $\theta$-convex
if the orbit $$\W_{\theta\cup\i(\theta)}\L_\Ga =\bigcup_{w\in \cal W_{\theta\cup\i(\theta)}} \op{Ad}_w \L_\Ga \text{ is a convex subset of }\fa.$$
\end{Def}
If $\theta=\Pi$, then $\W_{\Pi}=\{e\}$, and hence the $\Pi$-convexity of $\Ga$ is same as the convexity of the limit cone $\L_\Ga$.
Since $\W_{\theta\cup\i(\theta)}\L_\Ga \cap \fa^+=\L_\Ga$,
the $\theta$-convexity of $\Ga$ is a stronger condition than the convexity of the limit cone $\L_\Ga$ in general.

\begin{theorem}[Upper semicontinuity]\label{lem:UC}\label{UC} 
     Let $\G$ be a  $\theta$-Anosov $\theta$-convex subgroup of $G$ for some $\theta\subset \Pi$.
     If $\sigma_n\to \id_\Ga$ in $\Hom(\Ga, G)$,   
      then  any accumulation point $\L$ of the sequence $\L_{\sigma_n(\G)}$ in $\mathsf{C}(\fa^+)$ is contained in  $ \L_\G$.
 \end{theorem}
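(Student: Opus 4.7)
The plan is to reduce to the case $\theta=\i(\theta)$ and then apply Proposition~\ref{ucp} with the $\theta$-admissible cone chosen to be $\L_\Ga$ itself. First I reduce to $\i$-invariant $\theta$: setting $\hat\theta=\theta\cup\i(\theta)$, the limit cones $\L_\Ga$ and $\L_{\sigma(\Ga)}$ do not depend on $\theta$; the $\theta$-convexity hypothesis is literally the same as $\hat\theta$-convexity since both assert that $\W_{\hat\theta}\L_\Ga$ is convex; and $\Ga$ being $\theta$-Anosov implies $\hat\theta$-Anosov by applying the defining inequality \eqref{eqn:ta} to $\ga^{-1}$ together with $\mu(\ga^{-1})=\i(\mu(\ga))$. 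So I may replace $\theta$ by $\hat\theta$ and assume $\theta=\i(\theta)$ henceforth.

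The key observation is then that $\L_\Ga$ itself is a $\theta$-admissible cone: $\i(\L_\Ga)=\L_\Ga$ follows from $\mu(g^{-1})=\i(\mu(g))$, the $\theta$-convexity hypothesis gives that $\W_\theta\L_\Ga$ is convex, and the Anosov property \eqref{kan} gives $\L_\Ga\cap\ker\alpha=\{0\}$ for every $\alpha\in\theta$.

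With this in hand, I apply Proposition~\ref{ucp} with $\cal D:=\L_\Ga$: for any open cone $\C_0\subset\fa^+$ containing $\L_\Ga-\{0\}$, there is an open neighborhood $\cal O$ of $\id_\Ga$ in $\Hom(\Ga,G)$ such that $\L_{\sigma(\Ga)}-\{0\}\subset\C_0$ for all $\sigma\in\cal O$. Since convergence in $\mathsf C(\fa^+)$ amounts to Hausdorff convergence on $\fa^+\cap\fa^1$, any accumulation point $\L$ of the sequence $\L_{\sigma_n(\Ga)}$ must satisfy $\L\subset\overline{\C_0}$ for every such $\C_0$. To close the argument I note that $\L_\Ga$ is the intersection of $\overline{\C_0}$ over all open cones $\C_0$ in $\fa^+$ containing $\L_\Ga-\{0\}$: given any $v\in(\fa^+\cap\fa^1)\setminus\L_\Ga$, compactness of $\L_\Ga\cap\fa^1$ allows me to produce a sufficiently small open neighborhood $U$ of $\L_\Ga\cap\fa^1$ in $\fa^+\cap\fa^1$ with $v\notin\overline U$, and $\C_0:=\R_+U$ is then an open cone in $\fa^+$ containing $\L_\Ga-\{0\}$ with $v\notin\overline{\C_0}$. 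Hence $\L\subset\L_\Ga$.

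Proposition~\ref{ucp} (which rests on the Kapovich--Leeb--Porti local-to-global principle, Theorem~\ref{thm:LTG}) does all the analytic work; the only point of substance is that the $\theta$-convexity hypothesis is precisely what lets us use $\L_\Ga$ itself as the admissible cone in Proposition~\ref{ucp}. Without $\theta$-convexity, one would be forced to first enlarge $\L_\Ga$ to some $\theta$-admissible cone via Lemma~\ref{con1}, and there would be no mechanism guaranteeing that such an enlargement can be chosen to track $\L_\Ga$ arbitrarily closely -- this is exactly where the convexity assumption is essential, and it is the reason (as section~\ref{fa} will show) that continuity can genuinely fail without it.
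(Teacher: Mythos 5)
Your proof is correct and follows essentially the same route as the paper's: reduce to $\theta=\i(\theta)$, observe that $\theta$-convexity together with \eqref{kan} and $\i$-invariance makes $\L_\Ga$ itself a $\theta\cup\i(\theta)$-admissible cone, and apply Proposition~\ref{ucp} with $\cal D=\L_\Ga$. The extra details you supply (the verification that $\theta$-Anosov implies $\theta\cup\i(\theta)$-Anosov and the intersection-over-all-$\C_0$ argument identifying the accumulation point) are correct and are left implicit in the paper.
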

\begin{proof} If $\Ga$ is $\theta$-Anosov and $\theta$-convex,  then  $\Ga$ is $\theta\cup\i(\theta)$-Anosov and $\theta\cup\i(\theta)$-convex and
$\L_\Ga$ is $\theta\cup\i(\theta)$-admissible. Hence
the claim follows from Proposition \ref{ucp} by setting $\cal D=\L_\Ga$.
\end{proof}
Theorem \ref{m1} in the introduction is a consequence of Proposition \ref{LC} and Theorem \ref{UC}.

For Borel Anosov subgroups, we  need neither the reductive Zariski closure assumption nor the convexity assumption in Theorem \ref{m1}.
We first observe:
\begin{lemma}\label{convex2}
Any $\Pi$-Anosov subgroup of $\Ga$  is $\Pi$-convex.
\end{lemma}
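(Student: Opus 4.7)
First I would observe that when $\theta=\Pi$ we have $\fa_\Pi=\bigcap_{\alpha\in\Pi-\Pi}\ker\alpha=\fa$, hence $\W_\Pi=\{e\}$ and also $\i(\Pi)=\Pi$, so $\W_{\Pi\cup\i(\Pi)}\L_\Ga=\L_\Ga$. Thus $\Pi$-convexity of $\Ga$ is precisely the convexity of the limit cone $\L_\Ga$ in $\fa$, and this is what I must establish.

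The key geometric input is that $\L_\Ga-\{0\}$ is contained in the open Weyl chamber $\op{int}\fa^+$. Indeed, let $v=\lim t_i\mu(\ga_i)\in\L_\Ga-\{0\}$ with $t_i\downarrow 0$ and $\ga_i\in\Ga$; since $v\neq 0$ we have $\|\mu(\ga_i)\|\to\infty$, and the quasi-isometric embedding property \eqref{qi} forces $|\ga_i|\to\infty$. The Borel-Anosov condition $\alpha(\mu(\ga_i))\ge C^{-1}|\ga_i|-C$ combined with $\|\mu(\ga_i)\|\le L|\ga_i|+L$ yields, after multiplying by $t_i$ and passing to the limit, $\alpha(v)\ge(CL)^{-1}\|v\|>0$ for every $\alpha\in\Pi$. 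Hence $v\in\op{int}\fa^+$.

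To bring in convexity, I would then pass to the reductive quotient of the Zariski closure. Let $\ov\Ga^{\op{Zar}}=H\ltimes U$ be a Levi decomposition and let $\Ga_H<H$ be the image of $\Ga$ under the projection to $H$. By \Cref{Jordan} applied with $\theta=\Pi$ (where $p_\Pi=\id$), the group $\Ga_H$ is a Zariski dense discrete subgroup of $H$ with
\[
\L_{\Ga_H}=\L_\Ga \qquad (\text{limit cones in }G).
\]
As a subgroup of $G$, the group $\Ga_H$ is still discrete (since $H$ is closed in $G$) and its Zariski closure is the reductive group $H$. Its limit cone in $G$ coincides with $\L_\Ga$, and by the previous paragraph this is contained in $\op{int}\fa^+\cup\{0\}$.

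At this point \Cref{convex1} applies directly to $\Ga_H<G$: a discrete subgroup with reductive Zariski closure whose limit cone lies in $\op{int}\fa^+\cup\{0\}$ has a convex limit cone. Therefore $\L_\Ga=\L_{\Ga_H}$ is convex, which is exactly $\Pi$-convexity of $\Ga$. There is no real obstacle here; the proof is a clean assembly of \Cref{Jordan} (to reduce to reductive Zariski closure without losing the limit cone) and \Cref{convex1} (which supplies convexity once the cone is chamber-interior), with the Borel-Anosov hypothesis used only to guarantee $\L_\Ga-\{0\}\subset\op{int}\fa^+$.
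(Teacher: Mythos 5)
Your proposal is correct and follows essentially the same route as the paper: use the Borel-Anosov condition to place $\L_\Ga-\{0\}$ in $\op{int}\fa^+$, pass to the Levi quotient via \Cref{Jordan} (with $\theta=\Pi$) to obtain the Zariski dense discrete subgroup $\Ga_H<H$ with $\L_{\Ga_H}=\L_\Ga$, and conclude by \Cref{convex1}. The only difference is that you spell out the chamber-interior estimate and the discreteness of $\Ga_H$ in $G$ explicitly, which the paper leaves implicit.
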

\begin{proof} Let $\Ga$ be a Borel Anosov subgroup of $G$.
By Lemma \ref{Jordan} for $\theta=\Pi$, we have $\L_\Ga^{\Jor}=\L_\Ga$. Let $G_0$ be the identity component of the Zariski closure of $\Ga$.   Using the same notation as in the proof of Lemma \ref{Jordan}, we have $\L_\Ga=\L_{\Ga_H}$. Since $\Ga_H$ is Zariski dense in $H$ and $\L_{\Ga_H}=\L_{\Ga}\subset \inte\fa^+\cup\{0\}$ as $\Ga$ is Borel Anosov, $\L_{\Ga_H}$ is convex by Lemma \ref{convex1}. Hence $\L_\Ga$ is convex. \end{proof}

\begin{prop}\label{borel}
    If $\Ga$ is a Borel Anosov subgroup of $G$, then the map 
    $$\sigma \mapsto
\L_{\sigma(\Ga)} \in \mathsf C(\fa^+)$$ is continuous at  $\id_\Ga \in \op{Hom}(\Ga, G)$.
\end{prop}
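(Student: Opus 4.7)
The plan is to assemble the proof from pieces already at our disposal, since both the upper and the lower semicontinuity required for continuity turn out to hold without the reductive Zariski closure hypothesis in the Borel-Anosov case.

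First I would handle the lower semicontinuity. Since $\Ga$ is in particular $\theta$-Anosov for $\theta = \Pi$, Proposition \ref{LCA} applies with $\theta = \Pi$: any accumulation point $\L$ of the sequence $\L_{\sigma_n(\Ga)}$ in $\mathsf{C}(\fa^+)$ contains $\L_\Ga$. Note that Proposition \ref{LCA} was established precisely so that the Zariski closure hypothesis present in Proposition \ref{LC} could be dropped in the Anosov setting; this is what makes it applicable here.

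Next I would handle the upper semicontinuity. Theorem \ref{UC} requires a $\theta$-convexity hypothesis. For $\theta = \Pi$ we have $\W_{\Pi \cup \i(\Pi)} = \W_\Pi = \{e\}$, so $\Pi$-convexity of $\Ga$ is exactly the convexity of $\L_\Ga$. This is supplied for free by Lemma \ref{convex2}: every Borel-Anosov subgroup is $\Pi$-convex. Invoking Theorem \ref{UC} with $\theta = \Pi$, every accumulation point of $\L_{\sigma_n(\Ga)}$ is contained in $\L_\Ga$.

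Combining the two semicontinuities and the compactness of $\mathsf{C}(\fa^+)$ (Definition \ref{top2}), every accumulation point of the sequence $\L_{\sigma_n(\Ga)}$ coincides with $\L_\Ga$, which is exactly the claimed continuity at $\id_\Ga$. There is no real obstacle beyond recognizing that Lemma \ref{convex2} is the substitute for both the reductive Zariski closure assumption (needed in Proposition \ref{LC}) and the general $\theta$-convexity assumption (needed in Theorem \ref{UC}), so neither hypothesis from Theorem \ref{m1} is required in the Borel-Anosov case.
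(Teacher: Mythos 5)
Your proposal is correct and follows exactly the paper's own argument: lower semicontinuity via Proposition \ref{LCA} with $\theta=\Pi$, and upper semicontinuity via Theorem \ref{UC} combined with Lemma \ref{convex2}, which supplies the $\Pi$-convexity. Nothing further is needed.
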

\begin{proof}
    The lower semicontinuity follows from Proposition \ref{LCA} with $\theta=\Pi$. The upper semicontinuity follows from Theorem \ref{UC} and Lemma \ref{convex2}.
\end{proof}

\section{Application to sharpness and examples of $\theta$-convex subgroups}\label{ex}
Let $G$ be a connected semisimple real algebraic group.
Recall from the introduction that
for a closed subgroup $H$ of $G$, a discrete subgroup $\Ga$ of $G$ is called {\it sharp} for $G/H$ if $\L_\Ga\cap \L_H=\{0\}$. We show that the sharpness is an open condition in the following situation:
\begin{prop} \label{sharp}
 Let $H<G$ be a closed subgroup. Let $\theta\subset \Pi$.
 Let $\Ga$ be a
 $\theta$-Anosov subgroup of $G$ which is sharp for $G/H$.
 Suppose  that $\Ga$ is $\theta$-convex, or more generally that there exists a $\theta\cup \i(\theta)$-admissible closed cone $\cal C\subset \fa^+$ 
 such that $$\text{ $\L_\Ga\subset \C\quad $ and  $ \quad \cal C\cap \cal \L_H=\{0\}$.}$$
Then there exists an open neighborhood $\cal O$ of $\id_\G$ in $\Hom(\Ga, G)$
such that for all $\sigma\in \cal O$, $\sigma(\Ga)$ is sharp for
$G/H$.
\end{prop}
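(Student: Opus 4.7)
The plan is to reduce everything to a direct application of Proposition \ref{ucp}, applied with $\theta$ replaced by $\theta\cup\i(\theta)$. First I would check that the $\theta$-convexity hypothesis is a special case of the ``more general'' hypothesis. Indeed, if $\Ga$ is $\theta$-Anosov and $\theta$-convex, I claim $\C := \L_\Ga$ is itself $(\theta\cup\i(\theta))$-admissible: it is $\i$-invariant since this is a general property of the limit cone; the orbit $\W_{\theta\cup\i(\theta)}\L_\Ga$ is convex by the definition of $\theta$-convexity; and $\L_\Ga\cap\ker\alpha=\{0\}$ for $\alpha\in\theta$ by \eqref{kan}, which combined with the $\i$-invariance of $\L_\Ga$ gives the analogous vanishing for $\alpha\in\i(\theta)$. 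So it suffices to treat the general case with $\C$ a $(\theta\cup\i(\theta))$-admissible cone containing $\L_\Ga$ and satisfying $\C\cap\L_H=\{0\}$.

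Next I would fatten $\C$ to an open cone $\C_0\subset \fa^+$ with $\C-\{0\}\subset \C_0$ and $\C_0\cap\L_H=\{0\}$. This is a compactness argument on the unit sphere: the subsets $\C\cap\fa^1$ and $\L_H\cap\fa^1$ are closed and, by the sharpness assumption, disjoint in the compact sphere $\fa^1$; hence they have positive separation $\delta>0$. I then take $\C_0$ to be the positive cone spanned by an open $\delta/2$-neighborhood of $\C\cap\fa^1$ inside $\fa^+\cap \fa^1$; by construction $\C-\{0\}\subset\C_0$ and $\C_0\cap\L_H=\{0\}$.

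Finally I would apply Proposition \ref{ucp} with $\theta$ replaced by $\theta\cup\i(\theta)$, with $\cal D=\C$ and the above open cone $\C_0$. To justify this, note $\Ga$ is in fact $(\theta\cup\i(\theta))$-Anosov: for $\beta=\i(\alpha)\in\i(\theta)$, one has $\beta(\mu(\g))=\alpha(\mu(\g^{-1}))$ and $|\g^{-1}|=|\g|$, so the Anosov inequality \eqref{eqn:ta} transfers from $\theta$ to $\i(\theta)$. Proposition \ref{ucp} then yields an open neighborhood $\cal O$ of $\id_\Ga$ in $\Hom(\Ga,G)$ such that
\[
\L_{\sigma(\Ga)}-\{0\}\subset \C_0 \quad \text{for all } \sigma\in\cal O.
\]
Combined with $\C_0\cap\L_H=\{0\}$, this gives $\L_{\sigma(\Ga)}\cap\L_H=\{0\}$ for every $\sigma\in\cal O$, i.e.\ $\sigma(\Ga)$ is sharp for $G/H$.

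There is no serious obstacle: the hard analytic input (the local-to-global principle, encoded in Proposition \ref{ucp}) has already been established. The only subtlety is the bookkeeping in step one — verifying that $\L_\Ga$ satisfies the three axioms of a $(\theta\cup\i(\theta))$-admissible cone under the $\theta$-convexity hypothesis, in particular threading the $\i$-invariance through the wall-avoidance condition \eqref{kan} to get the $\i(\theta)$ walls for free.
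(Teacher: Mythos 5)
Your proof is correct and follows essentially the same route as the paper: reduce to the case $\theta=\i(\theta)$, observe that $\theta$-convexity makes $\L_\Ga$ itself a $(\theta\cup\i(\theta))$-admissible cone, and apply Proposition \ref{ucp} to an open cone containing $\C-\{0\}$ and disjoint from $\L_H-\{0\}$. The only cosmetic difference is that the paper produces this separating cone via Lemma \ref{con1} (taking $\C_0=\fa^+-\L_H$ there), whereas you build it directly by a compactness/fattening argument on the unit sphere; both are valid.
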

\begin{proof} By the hypothesis, we may assume that $\theta=\i(\theta)$.
Since $\C $ is a $\theta$-admissible cone
such that $\C- \{0\}$ is contained in the open cone $\fa^+ - \L_H$ in $\fa^+$, by \Cref{con1}, there exists a $\theta$-admissible cone $\C_1$ in $\fa^+$ such that 
$$\C -\{0\} \subset \inte \C_1\subset \fa^+-\L_H.$$
Since $\L_\Gamma \subset \inte \C_1\cup\{0\}$, by \Cref{ucp}, we have that for all $\sigma \in \Hom(\G,G)$ sufficiently close to $\id_\Ga$, $\L_{\sigma(\Gamma)}-\{0\} \subset \inte\C_1$, implying that $\L_{\sigma(\Ga)}\cap \L_H=\{0\}$. If $\Ga$ is $\theta$-convex, $\L_\Ga$ is a $\theta$-admissible closed cone. Hence this proves the claim. \end{proof}

The following is then a special case of Proposition \ref{sharp}: note that the $\theta$-convexity of $\Ga$ is not assumed.
\begin{cor}\label{sharp2}
 If $H$ is a reductive algebraic subgroup  with co-rank\footnote{The co-rank of $H$ is defined as $\op{rank}G-\op{rank}H$.} one in $G$ and $\Ga$ is $\theta$-Anosov and sharp for $G/H$ for some $\theta\subset \Pi$, then
 $\sigma(\Ga)$ is sharp for
$G/H$ for all $\sigma\in \Hom(\Ga, G)$ sufficiently close to $\id_\Ga$. 
\end{cor}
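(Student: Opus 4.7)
The plan is to invoke \Cref{sharp}. Since $\mu(g^{-1})=\i(\mu(g))$, a $\theta$-Anosov representation is automatically $\i(\theta)$-Anosov, so after replacing $\theta$ by $\theta\cup\i(\theta)$ I may assume $\theta=\i(\theta)$. By \Cref{sharp}, it then suffices to exhibit a $\theta$-admissible closed cone $\C\subset\fa^+$ with $\L_\Ga\subset\C$ and $\C\cap\L_H=\{0\}$.

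The decisive structural input is that the co-rank one hypothesis on $H$ forces $\L_H$ to have codimension one in $\fa$. Indeed, applying \Cref{hg} with the ambient subgroup taken to be $H$ itself yields $\L_H=\mu_G(\exp\fa_H^+)$, and since $\dim\fa_H=\rank H=\rank G-1$, the cone $\L_H$ lies in the finite hyperplane arrangement $\bigcup_{w\in\cal W}w\fa_H\subset\fa$ and in particular has empty interior in $\fa^+$. Sharpness then places $\L_\Ga-\{0\}$ in the open complement $\fa^+\setminus\L_H$.

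To build $\C$, I would work inside the convex Levi chamber $\cal W_\theta\fa^+\subset\fa$. A preliminary observation is that $\cal W_\theta\L_\Ga$ meets $\L_H$ only at the origin: for $v\in\L_\Ga-\{0\}$ and $s\in\cal W_\theta$, the point $sv$ can lie in $\L_H\subset\fa^+$ only if it remains in the $\cal W$-fundamental domain $\fa^+$, which (using that $\Ga$ is $\theta$-Anosov and hence $\L_\Ga$ avoids the $\theta$-walls) forces $sv=v$ and then $v\in\L_\Ga\cap\L_H=\{0\}$. Using compactness of $\cal W_\theta\L_\Ga\cap\fa^1$ together with the codimension-one structure of the arrangement $\bigcup_w w\fa_H\supset\L_H$, I would construct a $\cal W_\theta$-invariant closed convex cone $\Omega\subset\fa$ containing $\cal W_\theta\L_\Ga$ and disjoint from $\L_H-\{0\}$, as the intersection of finitely many $\cal W_\theta$-invariant closed half-spaces separating $\cal W_\theta\L_\Ga\cap\fa^1$ from $\L_H\cap\fa^1$. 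Setting $\C:=\Omega\cap\fa^+$ and then applying \Cref{con1} to shrink $\C$ slightly away from the walls $\ker\alpha$, $\alpha\in\theta$, yields the required $\theta$-admissible cone, and the conclusion follows from \Cref{sharp}. The main obstacle is the $\cal W_\theta$-equivariant convex separation of $\cal W_\theta\L_\Ga$ from $\L_H$, for which the codimension-one nature of $\L_H$ is essential: it provides enough room for a $\cal W_\theta$-invariant convex enclosure of $\L_\Ga$ (and hence of the convex hull of its $\cal W_\theta$-orbit) that avoids $\L_H$.
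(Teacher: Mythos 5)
Your reduction to \Cref{sharp} is the right strategy, and your observation that $\L_H$ lies in the codimension-one arrangement $\W\fa_H$ (via \Cref{hg}) and that $\W_\theta\L_\Ga\cap\L_H=\{0\}$ (via the fundamental-domain property of $\fa^+$) are both correct and match the paper. But the decisive step is exactly the one you wave at: producing a $\W_\theta$-invariant \emph{convex} closed cone containing $\W_\theta\L_\Ga$ and disjoint from $\L_H-\{0\}$. Separating by an intersection of closed half-spaces presupposes that the convex hull of $\W_\theta\L_\Ga$ already avoids $\L_H-\{0\}$, and nothing in your argument guarantees this. The corollary deliberately does \emph{not} assume $\theta$-convexity of $\Ga$, so $\W_\theta\L_\Ga$ need not be convex; a priori it could have pieces on opposite sides of a wall $w\fa_H$, in which case its convex hull crosses $\L_H$ and no such $\Omega$ exists. "Codimension one gives enough room" is not a proof of this.

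The paper closes this gap with a connectedness argument that is absent from your proposal. First, one takes $\theta$ \emph{maximal} among subsets for which $\Ga$ is $\theta$-Anosov (not merely $\theta\cup\i(\theta)$ as you do): maximality forces $\L_\Ga\cap\ker\alpha\ne\{0\}$ for every $\alpha\in\Pi-\theta$, and combined with the connectedness of $\L_\Ga\cap\fa^1$ for Anosov subgroups \cite[Proposition A.2]{DR}, this glues the $\W_\theta$-translates of $\L_\Ga\cap\fa^1$ along the fixed points of the reflections $w_\alpha$ and shows $\W_\theta\L_\Ga\cap\fa^1$ is connected. A connected set in the complement of the hyperplane arrangement $\W\fa_H$ inside the convex cone $\W_\theta\fa^+$ lies in a single component $\C$, which is automatically convex, and the connectedness again forces $w\C=\C$ for all $w\in\W_\theta$. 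This is what delivers the $\W_\theta$-invariant convex enclosure; with your choice of $\theta$ the set $\W_\theta\L_\Ga\cap\fa^1$ could be disconnected and the component $\C$ need not be $\W_\theta$-invariant. You would need to supply these ingredients (the citation to \cite{DR} and the maximality of $\theta$) for the argument to go through; the final intersection with a $\theta$-admissible cone from \Cref{theta} is then routine, as you indicate.
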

\begin{proof}  We may assume without loss of generality that $ \theta $ is a maximal subset for which $ \Ga $ is $\theta$-Anosov. In particular, $\theta=\i(\theta)$. Since $\L_{H, H}=\fa_H^+$, it follows from Lemma \ref{hg} that we have
$$\L_H=\W \fa_H\cap \fa^+ .$$
 We claim that
\[
 \W_\theta \L_\G  \cap \fa^1\quad \text{is connected}
\] where $\fa^1$ is the unit sphere in $\fa$.
To see this, note that $ \L_\G \cap \fa^1 $ is connected \cite[Proposition A.2]{DR}. By the maximality assumption on $\theta$, we have that $\L_\Ga\cap   \ker \alpha \ne \{0\} $ for every $ \alpha \in \Pi - \theta $.
 In particular, if we denote by $ w_\alpha $ the reflection along the wall $\ker \alpha$, then for all $\alpha\in \Pi-\theta$,
\[
(\L_\G \cap \fa^1) \cup w_\alpha (\L_\G \cap \fa^1)
\]
is connected, since $ w_\alpha $ fixes the intersection $ \L_\G \cap \ker\alpha\cap  \fa^1 $ pointwise.
Since $ \W_\theta $ is generated by $ \{ w_\alpha : \alpha \in \Pi - \theta\} $,
 the connectedness of $ \W_\theta \L_\G \cap \fa^1 $ follows.

Since $\Ga$ is sharp for $G/H$, we have $ \L_\Ga \cap \L_H = \{0\} $.
Since the co-rank of $H$ is one, $\fa_H$ is a hyperplane in $\fa$. Moreover, since $ \L_\G - \{0\} $ is connected,  and $\L_H=\W \fa_H\cap \fa^+$, the set $\L_\Ga - \{0\} $ must lie in one of the connected components of $ \W_\theta \fa^+ - \W \fa_H $, say $ \C $. Note that $ \C $ is a convex open cone in $ \W_\theta \fa^+ $ containing $\L_\Ga-\{0\}$.
Since $\W_\theta\L_\Ga - \{0\}$ is connected, it follows that
$$\W_\theta \L_\Ga -\{0\} \subset \C  .$$  For each $w\in \W_\theta$,
since $w\C$ is a connected component of $\W_\theta\fa^+ -\W \fa_H$ containing $w(\W_\theta \L_\Ga - \{0\}) = \W_\theta \L_\G - \{0\}$, we must have
$w\C=\C$, and hence $$\W_\theta \C=\C.$$ In particular,  the closure $ \bar \C $ of $ \C $ in $ \fa $ is a $ \W_\theta $-invariant closed convex cone. 

Let $$ \C_0 := \fa^+ \cap \bar \C ;$$ thus, $ \W_\theta \C_0 = \C $. Note that $ \inte \C_0 $ is a connected component of $ \fa^+ - \W \L_H $ containing $ \L_\Ga - \{0\} $.
Since $ \L_H $ and $ \L_\Ga $ are both $ \i $-invariant, it follows that $ \C_0 $ is also $ \i $-invariant.  Let $\cal D$ be any $\theta$-admissible cone containing $\L_\Ga$ (see Lemma \ref{theta} for the existence of such a cone). Then the cone
$$ \C_1 :=\C_0\cap \cal D$$
is a $ \theta $-admissible cone containing $ \L_\G - \{0\} $ in its interior, so the second claim follows from the first one.   
\end{proof}
As mentioned in the introduction, Corollary \ref{sharp2} was obtained in \cite[Corollary 6.3]{KT} by a different approach.

\subsection*{Examples of $\theta$-Anosov and $\theta$-convex subgroups}
Since $\W_{\theta\cup\i(\theta)}$ fixes
$\fa_{\theta\cup\i(\theta)}^+$, we have the following $\theta$-version of Theorem \ref{convex2}:
\begin{lemma} Any $\theta$-Anosov subgroup $\G<G$ such that  $\L_\Ga$ is a convex cone contained in $\fa_{\theta\cup\i(\theta)}^+$ is $\theta$-convex. 
\end{lemma}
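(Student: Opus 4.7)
The proof is essentially immediate from unpacking the definitions, so my plan has only one real step. First I would recall that, by the very definition of $\W_{\theta\cup\i(\theta)}$ (namely, the subgroup of the Weyl group fixing $\fa_{\theta\cup\i(\theta)}$ pointwise), every element of $\W_{\theta\cup\i(\theta)}$ acts as the identity on the subspace $\fa_{\theta\cup\i(\theta)} = \bigcap_{\alpha \in \Pi - (\theta\cup\i(\theta))} \ker\alpha$.

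Next, using the hypothesis that $\L_\Ga \subset \fa_{\theta\cup\i(\theta)}^+ \subset \fa_{\theta\cup\i(\theta)}$, I would conclude that every $w \in \W_{\theta\cup\i(\theta)}$ fixes $\L_\Ga$ pointwise, hence
\[
\W_{\theta\cup\i(\theta)} \L_\Ga \;=\; \L_\Ga.
\]
Since $\L_\Ga$ is convex by hypothesis, so is $\W_{\theta\cup\i(\theta)} \L_\Ga$, and this is precisely the definition of $\theta$-convexity given in Definition~\ref{convex}.

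There is no real obstacle: the content of the lemma lies entirely in the observation, already hinted at in the sentence preceding the statement, that $\W_{\theta\cup\i(\theta)}$ acts trivially on $\fa_{\theta\cup\i(\theta)}^+$, so the Weyl orbit condition in the definition of $\theta$-convexity collapses to ordinary convexity of $\L_\Ga$ under the ambient assumption.
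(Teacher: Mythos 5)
Your proof is correct and coincides with the paper's own (one-line) argument: since $\W_{\theta\cup\i(\theta)}$ fixes $\fa_{\theta\cup\i(\theta)}$ pointwise and $\L_\Ga\subset\fa_{\theta\cup\i(\theta)}^+$, the orbit $\W_{\theta\cup\i(\theta)}\L_\Ga$ equals $\L_\Ga$, which is convex by hypothesis. Nothing is missing.
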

    As another example, we have the following:
    \begin{lemma}\label{one}
    If $H$ is a rank one simple algebraic subgroup of $G$ and $\Ga$ is a non-elementary convex cocompact subgroup of $H$, then $\Ga$ is a $\theta$-Anosov $\theta$-convex subgroup of $G$ for some $\theta\subset \Pi$.
\end{lemma}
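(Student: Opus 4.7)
The plan is to exploit the fact that $H$ has rank one, which forces the limit cone to lie on a single ray in $\fa^+$; once this is established, both the Anosov and convexity properties follow by choosing $\theta$ to consist of the simple roots that are strictly positive on that ray.

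First, I would use the hypothesis to identify the limit cone as a ray. Since $H$ has rank one, $\fa_H$ is one-dimensional, and $\fa_H^+$ is a single ray. Because $\Ga$ is non-elementary convex cocompact in the rank-one group $H$, the inclusion $\Ga\hookrightarrow H$ is a quasi-isometric embedding, so $\|\mu_H(\ga)\|\asymp|\ga|$ and $\L_{\Ga,H}=\fa_H^+$. Applying Lemma \ref{hg}, the one-dimensional ray $\fa_H^+$ meets exactly one (closed) Weyl chamber $w\fa^+$, so $\L_{\Ga,G}=w^{-1}\fa_H^+$ is a single ray in $\fa^+$. Writing $v_0\in\fa^+$ for the unit vector generating it, we moreover have $\mu(\ga)=w^{-1}\mu_H(\ga)=\|\mu(\ga)\|\,v_0$ for every $\ga\in\Ga$.

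Next, define
\[\theta:=\{\alpha\in\Pi:\alpha(v_0)>0\},\]
which is non-empty because $v_0\in\fa^+-\{0\}$ cannot be killed by every simple root. Setting $c:=\min_{\alpha\in\theta}\alpha(v_0)>0$, for each $\alpha\in\theta$ and $\ga\in\Ga$,
\[\alpha(\mu(\ga))=\alpha(v_0)\,\|\mu(\ga)\|\ge c\,\|\mu_H(\ga)\|\ge C^{-1}|\ga|-C\]
for a suitable $C>0$, using the quasi-isometric embedding estimate. This verifies that $\Ga$ is $\theta$-Anosov in the sense of Definition \ref{anosov}.

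Finally, for $\theta$-convexity: recall that $\W_{\theta\cup\i(\theta)}$ is generated by the reflections $s_\alpha$ with $\alpha\in\Pi-(\theta\cup\i(\theta))$. Any such $\alpha$ in particular lies outside $\theta$, hence satisfies $\alpha(v_0)=0$, so $s_\alpha(v_0)=v_0$. Therefore $\W_{\theta\cup\i(\theta)}$ fixes $v_0$ pointwise, and so $\W_{\theta\cup\i(\theta)}\L_\Ga=\L_\Ga$ is a single ray, trivially convex in $\fa$. Thus $\Ga$ is $\theta$-convex, completing the proof. There is no serious obstacle here; the main content is the observation that rank-one forces $\L_\Ga$ onto a single ray, after which the choice of $\theta$ as the support of $v_0$ among the simple roots makes both properties automatic.
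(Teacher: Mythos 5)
Your proof is correct and follows essentially the same route as the paper's: identify $\L_\Ga$ as a single ray via the compatible Cartan decompositions of $H$ and $G$, take $\theta$ to be the simple roots not vanishing on that ray, deduce the Anosov estimate from the quasi-isometric embedding of $\Ga$ into $H$, and observe that $\W_{\theta\cup\i(\theta)}$ fixes the ray so convexity is automatic. (The only nitpick: the ray $\fa_H^+$ need not meet exactly one closed Weyl chamber—it could lie in a wall—but Lemma \ref{hg} still yields a single ray since $w_1^{-1}v=w_2^{-1}v$ on overlaps, so nothing breaks.)
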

\begin{proof} We note that Guichard-Wienhard \cite[Proposition 4.7]{GW} showed that $\G$ is a $\theta$-Anosov subgroup of $G$ for some $\theta$. However, since we also need to demonstrate that $\Ga$ is $\theta$-convex for an appropriate choice of $\theta$, we provide a complete proof of this result.

 If the rank of $G$ is one, then $\G$ is a convex cocompact subgroup of $G$. So the claim just follows since any non-elementary convex cocompact subgroup $\G$ of a rank one Lie group $G$ is $\Pi$-Anosov and
   $\L_\Ga=\fa^+$. 
   
   Hence we assume that $\op{rank} G\ge 2$.
   Since $H< G$ is semisimple,  we may assume that
   we have a Cartan decomposition $H=(K\cap H)\fa_H^+ (K\cap H)$ as in \eqref{ch}.
  If $v$ is a unit vector in $\fa_H^+$, since $\exp \br_+v$
   and $\exp \br_-v$ are conjugate by an element of $K\cap H$,
we have that $\mu(H)=\L_\Ga$ is a  ray.

Set 
$$\theta\coloneqq\{\alpha\in \Pi: \L_\Ga \cap \ker \alpha = 0\} .$$
Since $\# \Pi \ge 2$ and $\L_\Ga$ is a ray, $\theta\ne \emptyset$.
Since $\L_\Ga$ is $\i$-invariant,
 $\theta=\i(\theta)$.  Moreover, by definition 
 $$\L_\Ga \subset \bigcap_{\alpha\in \Pi-\theta} \ker\alpha = \fa_\theta.$$
 Since $\W_\theta$ fixes $\fa_\theta$ pointwise,  we have $\W_\theta \L_\Ga=\L_\Ga$ which is hence  a ray.
 
Since  $\G< H$ is a convex cocompact subgroup, then
$(\G, |\cdot|)$ is quasi-isometrically embedded in $H/(K\cap H)$ under the orbit map $\ga \mapsto \ga (K\cap H) $. Since $H/(K\cap H)\subset X=G/K$ is an isometric embedding, the orbit map
$\ga \mapsto \ga o$ is a quasi-isometric embedding of $(\Ga, |\cdot|)$ into $(X, d)$. Since $\mu(\Ga)$ lies in a ray $\mu(H)=\L_\Ga$ which is disjoint from $\bigcup_{\alpha\in \theta} \ker \alpha$ except at zero,
it satisfies  \eqref{eqn:ta} for all $\alpha\in\theta$. Therefore $\Ga$ is $\theta$-Anosov and $\theta$-convex.
 \end{proof}

As a third family of examples, we have the following:
\begin{lemma}\label{two}
 Let $\op{rank} G=2$ and $\Ga<G$ be an Anosov subgroup. 
If $ \theta $ is the maximal subset of $\Pi$ for which $ \G $ is $ \theta$-Anosov, then
  $ \G $ is $\theta $-convex. 
\end{lemma}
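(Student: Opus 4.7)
The plan is to split into two cases based on whether $\theta = \Pi$ or $\theta \subsetneq \Pi$. If $\theta = \Pi$, then $\W_{\theta \cup \i(\theta)} = \W_\Pi = \{e\}$, so $\theta$-convexity coincides with convexity of $\L_\Ga$, which is exactly the content of Lemma \ref{convex2}.

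Suppose instead $\theta \subsetneq \Pi$. Using $\mu(\ga^{-1}) = \i(\mu(\ga))$, being $\theta$-Anosov is equivalent to being $\i(\theta)$-Anosov, so maximality of $\theta$ forces $\i(\theta) = \theta$. In rank $2$ with $\#\Pi = 2$ and $\theta$ a proper subset, one must have $\theta = \{\alpha\}$ for some simple root $\alpha$ with $\i(\alpha) = \alpha$; since $\i$ is an involution of the two-element set $\Pi$ fixing $\alpha$, it also fixes the other simple root $\beta$, so $\i = \op{id}$ on $\fa$. Consequently $\W_{\theta \cup \i(\theta)} = \W_\theta = \{e, w_\beta\}$, where $w_\beta$ denotes the reflection across $\ker \beta$. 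By the same maximality reasoning as in the proof of Corollary \ref{sharp2}, one has $\L_\Ga \cap \ker \beta \ne \{0\}$: otherwise $\beta > 0$ on the compact set $\L_\Ga \cap \fa^1$, and the quasi-isometric embedding \eqref{qi} would give $\beta(\mu(\ga)) \gtrsim |\ga|$, making $\Ga$ also $\{\beta\}$-Anosov and hence $\Pi$-Anosov, contradicting maximality.

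To show that $\W_\theta \L_\Ga = \L_\Ga \cup w_\beta \L_\Ga$ is convex, my plan is first to establish that $\L_\Ga$ is itself a convex cone. Once this is known, writing $\L_\Ga$ as the convex hull of $R := \L_\Ga \cap \ker \beta = \ker \beta \cap \fa^+$ and its other boundary ray $R' \subset \op{int} \fa^+$ (which must lie in $\op{int}\fa^+$ because $\L_\Ga \cap \ker \alpha = \{0\}$), and using that $w_\beta$ fixes $\ker \beta$ pointwise so $w_\beta \L_\Ga = \op{conv}(R, w_\beta R')$, a direct $2$-dimensional picture identifies $\L_\Ga \cup w_\beta \L_\Ga$ with the convex cone $\op{conv}(R', w_\beta R')$. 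For the convexity of $\L_\Ga$ itself, I would use Lemma \ref{Jordan} to pass to the Zariski-dense projection $\Ga_H$ in the reductive Levi $H$ of the Zariski closure, whose limit cone $\L_{\Ga_H, H}$ is convex by Benoist (Theorem \ref{be0}). The key rank-$2$ input is that $w_\beta \alpha$ is a positive root (since $\langle \alpha, \beta^\vee\rangle < 0$ for distinct simple roots in a rank-$2$ system), so $\L_\Ga - \{0\} \subset \{\alpha > 0\} \cap \{w_\beta \alpha > 0\} = \op{int}(\W_\theta \fa^+)$; that is, $\L_\Ga$ avoids both walls of the enlarged chamber $\W_\theta \fa^+$. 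Arguing as in Lemma \ref{convex1} inside this enlarged chamber, the $\W$-folding of $\L_{\Ga_H, H}$ happens within a single $\W/\W_\theta$-coset, yielding $\W_\theta \L_\Ga = \W_\theta \cdot \L_{\Ga_H, H}$, which inherits convexity from $\L_{\Ga_H, H}$.

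The main obstacle is this last step---transferring Benoist's convexity through the Cartan-projection folding when $\Ga$ has a non-trivial unipotent radical in its Zariski closure. Here the Tsouvalas estimate from the proof of Lemma \ref{Jordan} controls $\|p_\theta(\mu(\ga) - \mu(\ga_H))\|$ uniformly, and combined with $\L_\Ga \cap \ker \alpha = \{0\}$ (from the $\{\alpha\}$-Anosov property) this should show that $\L_\Ga$ and $\L_{\Ga_H, G}$ agree asymptotically up to a $\W_\theta$-orbit, closing the argument.
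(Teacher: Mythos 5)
Your overall skeleton matches the paper's proof: the reduction to $\theta=\{\alpha\}$ with $\i(\theta)=\theta$ via Lemma \ref{convex2} when $\theta=\Pi$, the maximality argument showing $\L_\Ga\cap\ker\beta\neq\{0\}$ (the paper argues identically, inserting the observation that $\mu(\ga)$ eventually enters any conical neighborhood of $\L_\Ga$ before invoking the quasi-isometric embedding \eqref{qi}), and the final two-dimensional picture. Indeed, the paper concludes exactly as you do: once $\L_\Ga$ is a convex cone containing the ray $\ker\beta\cap\fa^+$ in its boundary, its union with its reflection across the line $\ker\beta$ is convex.

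The genuine gap is in your proof that $\L_\Ga$ is convex, which is the input everything else rests on. The paper obtains this in one line: $\L_\Ga\cap\fa^1$ is connected for any Anosov subgroup \cite[Proposition A.2]{DR}, and since $\dim\fa=2$, a closed cone in $\fa^+$ meeting the unit sphere in a connected set is automatically convex (the spherical chamber is an arc of angle at most $\pi/2$). Your alternative route through the Levi decomposition does not close. Lemma \ref{Jordan} and the Tsouvalas estimate \eqref{tt} control only $|\omega_\alpha(\mu(\ga)-\mu(\ga_H))|$ for $\alpha\in\theta$, i.e.\ only the $p_\theta$-component of the discrepancy between $\mu(\ga)$ and $\mu(\ga_H)$. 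Here $\fa_\theta=\ker\beta$ is one-dimensional, so the entire $\beta$-direction of $\mu(\ga)$ --- precisely the direction that governs how $\L_\Ga$ sits against the wall $\ker\beta$, and hence where convexity could fail --- is uncontrolled whenever the Zariski closure of $\Ga$ has a nontrivial unipotent radical. In that case only the $\theta$-limit cones of $\Ga$ and $\Ga_H$ are known to agree, not the full limit cones, so convexity cannot be transferred from Benoist's theorem for $\L_{\Ga_H,H}$ by the folding argument of Lemma \ref{convex1}. You flagged this yourself as the main obstacle, and it is exactly where the argument breaks; the fix is to discard the Levi detour and invoke the connectedness of $\L_\Ga\cap\fa^1$ together with the rank-two assumption.
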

\begin{proof} Since $\L_\G \cap \fa^1$ is connected \cite[Proposition A.2]{DR} and $\text{dim} (\fa )=2$,
$\L_\Ga$ is a convex cone. If $ \G $ is Borel-Anosov, the claim follows from \Cref{convex2}.

Otherwise, $\G$ is $ \theta $-Anosov with $ \theta $ as a singleton, but not Borel-Anosov. By the maximality, $\theta=\i(\theta)$.
Let $\alpha$ denote the simple root in $\Pi-\theta$. 
We claim that $ \L_\G $ contains the ray $ \fa_\theta^+ = \fa^+\cap \ker \alpha$. Suppose not. Then there exists a closed cone $\C \subset \fa^+$ such that $\L_\G \subset \inte \C \cup \{0\}$ and $\C \cap \fa_\theta^+ = \{0\}$. In this case, $\mu(\g) \in \C$ for all but finitely many $\g \in \G$ (cf. \cite[Lemma 4.4]{DKO}), and hence 
 $$\liminf_{\g \in \G-\{e\}} \frac{\alpha(\mu(\g))}{\|\mu(\g)\|} > 0.$$
Since $\G$ is quasi-isometrically embedded in $G$,
this implies 
\[
 \liminf_{\g \in \G-\{e\}} \frac{\alpha(\mu(\g))}{|\g|} > 0.
\]  
 In particular, it follows that $\G$ is $\{\alpha\}$-Anosov. Since $\G$ is $\theta$-Anosov and $\Pi =\{\alpha\}\cup\theta$, we see that $\G$ is Borel-Anosov, which contradicts our assumption.

Therefore, $\L_\Ga$ is a convex cone containing the ray $\fa_\theta^+$ in the boundary. Therefore  $\W_\theta \L_\Ga$ is convex.
\end{proof}

Therefore the discussion above together with Proposition \ref{borel} proves Corollary \ref{three0} in the introduction. 

\section{Failure of the continuity of limit cones: an example}\label{fa}
Upper semicontinuity of limit cones does not hold in general-even for Anosov subgroups -- unless the subgroup
satisfies the convexity property.
In this section, we give an example that illustrates this failure. These examples are 
 $\theta$-Anosov subgroups of $\SL(4, \br)$ but are not $\theta$-convex. 

In  this section, we set
$$G= \SL(4,\R).$$
We fix the positive Weyl chamber $\fa^+$ of $G$ as follows:
$$\fa^+=\{ v=\text{diag}(v_1, v_2, v_3, v_4): v_1\ge v_2\ge v_3\ge v_4, \; \sum_{i=1}^4 v_i=0\} .$$
For simplicity, we identify $v=\text{diag}(v_1, v_2, v_3, v_4)$ with $v=(v_1, v_2, v_3, v_4)$.
The set $\Pi$ of simple roots  is given by
$$\alpha_i(v)=v_i-v_{i+1}\quad\text{for  $1\le i\le 3$} .$$
Consider the following block diagonal subgroup:
\begin{equation*}
 H= \begin{pmatrix}
     \SL(3, \R)  & 0\\
     0 & 1
 \end{pmatrix} < \SL(4, \R).   
\end{equation*}

Observe that the Cartan projection $\mu(H)=\mu_G(H) \subset \fa^+$ is a folded plane (see \Cref{folded_plane}):
$$\mu(H)=V_1\cup V_2$$
where
$$ V_1=\{ (v_1, v_2, 0, v_3)\in \fa^+: v_1\ge v_2\ge 0\} $$
and
$$V_2=\{(v_1, 0, v_2,v_3)\in \fa^+: v_1\ge 0\ge v_2\} . $$

\begin{figure}
\begin{tikzpicture}[scale=2]
    \path[fill=gray!10, thick] (0, 0) -- (3, 0) -- (1.5, 1.5) -- cycle;
    \draw[line width=0.4mm, LimeGreen!70] (2.25, 0.75) -- (1.5, 0.01) -- (0.75, 0.75);
    \draw[line width=0.4mm, PineGreen] (2, 0.5) -- (1.5, 0.01) -- (1, 0.5);
    \draw[thick] (0, 0) -- (3, 0) -- (1.5, 1.5) -- cycle;
    \node[below] at (1.5, -0.) {$\op{ker}\alpha_2$};
    \node[left] at (0.73, 0.78) {$\op{ker}\alpha_1$};
    \node[right] at (2.23, 0.78) {$\op{ker}\alpha_3$};
\end{tikzpicture}
\caption{The folded ``plane'' in the unit sphere of $\fa^+$} (green), containing the limit cone (darker green)\label{folded_plane}
\end{figure}
More precisely, 
consider the Cartan projection of $H$:
$\mu_H: H\to \fa_H^+$ where
$$\fa_H^+=\{(v_1, v_2, v_3, 0): v_1\ge v_2\ge v_3, v_1+v_2+v_3=0\}. $$

If $\mu_H(h)=(v_1, v_2, v_3,0)$ for $h\in H$,
then $\mu(h)\in V_1\cup V_2$ is given by \begin{equation*}
   \mu(h)= \begin{cases} (v_1, v_2, 0, v_3) &\text{ if $v_2\ge 0$}
   \\ (v_1, 0, v_2,v_3) &\text{ if $v_2\le 0$}. \end{cases}
\end{equation*}
Also consider the ray
$$V_0=V_1\cap V_2=\{(v_1, 0, 0, -v_1): v_1\ge 0\}.$$

\begin{lemma} \label{ta} Let $\theta=\{\alpha_1, \alpha_3\}$.
Any  Zariski dense Borel-Anosov subgroup $\Ga$ of $H$ is
a $\theta$-Anosov subgroup of $G$ which is not $\theta$-convex.  
Moreover,  for each $i=1,2$,
\be\label{lg} \L_\Ga\cap V_i\not\subset V_0 .\ee 
\end{lemma}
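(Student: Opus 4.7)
The plan is to prove the three assertions in sequence by unpacking the folded-plane structure of $\mu(H)$ together with the known structure of $\L_{\Ga, H}$. The key observation is that the Cartan projection of $H$ into $\fa^+$ is a two-sheeted fold along the ray $V_0$, and the Weyl element $s\in \W_\theta$ ``unfolds'' the two sheets.

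First, for $\theta$-Anosov-ness, I would bound $\alpha_1(\mu(\ga))$ and $\alpha_3(\mu(\ga))$ from below using the Borel-Anosov property of $\Ga<H$. Writing $\mu_H(\ga)=(v_1,v_2,v_3,0)$, the Borel-Anosov hypothesis in $H$ gives $v_1-v_2,\ v_2-v_3\ge C^{-1}|\ga|-C$. A case analysis on the sign of $v_2$, using the explicit formula for $\mu(\ga)\in V_1\cup V_2$, shows: if $v_2\ge 0$ then $\alpha_1(\mu(\ga))=v_1-v_2$ and $\alpha_3(\mu(\ga))=-v_3=v_1+v_2\ge v_1-v_2$; if $v_2\le 0$ then $\alpha_3(\mu(\ga))=v_2-v_3$ and $\alpha_1(\mu(\ga))=v_1\ge v_1-v_2$. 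In either case both $\alpha_1(\mu(\ga))$ and $\alpha_3(\mu(\ga))$ dominate $\min\{v_1-v_2,\,v_2-v_3\}\ge C^{-1}|\ga|-C$, so $\Ga$ is $\theta$-Anosov in $G$.

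For \eqref{lg}, since $\Ga$ is Zariski dense in $H$, Benoist's theorem (Theorem \ref{be0}) says $\L_{\Ga,H}$ is a convex cone with non-empty interior in $\fa_H^+$ and is invariant under the opposition involution of $H$, which sends $(v_1,v_2,v_3)\mapsto(-v_3,-v_2,-v_1)$ and in particular flips the sign of $v_2$. Since $\L_{\Ga,H}$ has 2-dimensional interior, it cannot be contained in the ray $\fa_H^+\cap\{v_2=0\}$; combined with the opposition symmetry, it must meet both $\{v_2>0\}$ and $\{v_2<0\}$ in open sets. By Lemma \ref{hg}, $\L_\Ga=\mu_G(\exp\L_{\Ga,H})$, and these two regions contribute points of $\L_\Ga$ in $V_1\setminus V_0$ (second coordinate $>0$) and $V_2\setminus V_0$ (third coordinate $<0$), respectively, proving \eqref{lg}.

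Finally, for non-$\theta$-convexity, note that $\i(\theta)=\theta$ and $\W_{\theta\cup\i(\theta)}=\W_\theta=\{e,s\}$, where $s$ is the reflection swapping the second and third coordinates. A direct inspection gives $V_1\cup sV_2\subset\{v_3=0\}$ and $V_2\cup sV_1\subset\{v_2=0\}$, so
\[
\W_\theta \L_\Ga \;\subset\; \{v_2=0\}\cup\{v_3=0\}.
\]
Choosing $p\in\L_\Ga\cap(V_1\setminus V_0)$ and $q\in\L_\Ga\cap(V_2\setminus V_0)$ from the previous step, $p$ has second coordinate positive and third coordinate zero, while $q$ has second coordinate zero and third coordinate negative; hence the midpoint $(p+q)/2$ has both its second and third coordinates nonzero and lies outside $\{v_2=0\}\cup\{v_3=0\}$. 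This shows $\W_\theta\L_\Ga$ is not convex, so $\Ga$ is not $\theta$-convex. The only delicate step is the case analysis in the first paragraph; everything else is a straightforward unpacking of the folded-plane picture.
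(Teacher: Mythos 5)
Your proof follows essentially the same route as the paper's: a case analysis on the sign of $v_2$ for the Anosov property, Benoist's theorem together with the opposition involution of $H$ (which flips the sign of $v_2$) for \eqref{lg}, and the midpoint of a point of $V_1\setminus V_0$ with a point of $V_2\setminus V_0$ for the failure of convexity. There is, however, one reversed inequality in the case analysis: when $v_2\le 0$ you assert $\alpha_1(\mu(\ga))=v_1\ge v_1-v_2$, but $v_2\le 0$ gives $v_1-v_2\ge v_1$, so this is backwards. The conclusion of that case is still true and the fix is one line: since $v_3=-v_1-v_2$ one has $v_2-v_3=v_1+2v_2\le v_1$, hence $\alpha_1(\mu(\ga))=v_1\ge v_2-v_3\ge\min\{v_1-v_2,\,v_2-v_3\}$ (the paper instead derives $v_1\ge\tfrac{1}{2}(v_1-v_2)$, which costs only a harmless factor of $2$ in the Anosov constant). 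With that repaired, the rest is correct. The only other differences are cosmetic: you bound both $\alpha_1$ and $\alpha_3$ explicitly, while the paper bounds $\alpha_1$ and invokes $\i(\alpha_1)=\alpha_3$; and you verify non-convexity of $\W_\theta\L_\Ga$ directly, whereas the paper notes that $\L_\Ga=\W_\theta\L_\Ga\cap\fa^+$ is already non-convex, which amounts to the same midpoint computation.
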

\begin{proof} Fix a word metric $|\cdot |$ on $\G$ with respect to a finite generating subset of $\Ga$.
Since $\Ga$ is a Borel-Anosov subgroup of $H$,
there exists  $C>1$ such that
for any $\ga\in \Ga$, 
$$\alpha_1(\mu_H(\ga))\ge C^{-1}|\ga|-C .$$
For $\ga\in \Ga$ with $\mu_H(\ga)=(v_1, v_2, v_3,0)$,
we then have 
$\alpha_1(\mu(\ga))=v_1-v_2$ if $\mu(\ga)\in V_1$ or 
 $\alpha_1(\mu(\ga))=v_1$ if $\mu(\ga)\in V_2$.
In the second case,
we have $0\ge v_2\ge v_3=-v_1-v_2$, and hence  $v_1\ge -2v_2\ge -v_2$.
So $v_1\ge \frac{1}{2}(v_1-v_2)$.
Therefore for any $\ga\in \Ga$,
$$\alpha_1(\mu(\ga)) \ge\frac{1}{2} \alpha_1(\mu_H(\ga)) \ge \frac{1}{2C} |\ga|- 2C .$$

It follows that $\G$ is an $\{\alpha_1\}$-Anosov subgroup of $G$ or, equivalently, an $\{\alpha_1, \alpha_3\}$-Anosov subgroup of $G$
since $\i(\alpha_1)=\alpha_3$.

We now claim that $\Ga$ is not $\theta$-convex for $\theta=\{\alpha_1, \alpha_3\}$.
Since $\Ga$ is a Zariski dense Borel-Anosov subgroup {\it of} $H$, $\L_{\Ga, H}$ is a convex cone with non-empty interior and contained in $\inte\fa_H^+\cup\{0\}$. Hence
$\L_\Ga-\{0\}$ is contained in the union of relative interiors of $V_1$ and $V_2$, $\L_\Ga\cap V_i \not\subset V_0$ and $\L_\Ga$ intersects each relative interior of $V_i$ non-trivially.  Hence $\L_\Ga$ is not convex, and as a consequence, $\G$ is not $\theta$-convex. \end{proof}

Let $\G$ be a Zariski-dense Schottky subgroup of $H$
generated by two diagonalizable elements $$a= \exp v= \text{diag}(e^{v_1}, e^{v_2}, e^{v_3}, 1)$$
and $$ b=g \operatorname{diag} (e^{w_1}, 1, e^{-w_1}, 1)g^{-1} \quad\text{for some $g \in H$}$$
where $v_1>v_2>0> v_3$ and $w_1>0$.
We refer to \cite{ELO} for a precise definition of a higher rank Schottky subgroup and for the proof that a Schottky subgroup is a Borel-Anosov subgroup.

\begin{prop} \label{ce} For the above Zariski dense Schottky subgroup $\Ga<H$, the map 
$$\sigma\mapsto \L_{\sigma(\Ga)}$$
is not continuous at $\id_\Ga\in \Hom(\Ga, G)$.
\end{prop}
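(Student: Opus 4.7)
The plan is to produce perturbations $\sigma_n\to\id_\Ga$ whose images $\sigma_n(\Ga)$ are Zariski-dense in $G=\SL(4,\R)$. Benoist's theorem then forces each $\L_{\sigma_n(\Ga)}$ to be convex, whereas lower semicontinuity together with the non-convexity of $\L_\Ga$ supplied by \Cref{ta} rules out $\L_{\sigma_n(\Ga)}\to\L_\Ga$.

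For the construction, I identify $\Hom(\Ga,G)\cong G\times G$ via $\sigma\mapsto(\sigma(a),\sigma(b))$, using that $\Ga=\langle a,b\rangle$ is free (being Schottky). By \cite[Proposition 8.2]{AB}, the locus of Zariski-dense representations is Zariski-open in $G\times G$, and it is non-empty since $\SL(4,\R)$ contains Zariski-dense free subgroups of rank two. A non-empty Zariski-open subset of the irreducible variety $G\times G$ is Zariski-dense, hence meets every Euclidean neighborhood of $(a,b)$. This yields $\sigma_n\to\id_\Ga$ with each $\sigma_n(\Ga)$ Zariski-dense in $G$. (Alternatively, one can conjugate $b$ by $u_n=\exp(t_n X)$ for a generic $X\in\fg\setminus\fh$ and $t_n\to 0$, and invoke the same openness statement to confirm Zariski density for large $n$.)

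By \Cref{be0}, each $\L_{\sigma_n(\Ga)}$ is a convex cone in $\fa^+$. Using compactness of $\mathsf C(\fa^+)$, extract a subsequence with $\L_{\sigma_n(\Ga)}\to \L$ in $\mathsf C(\fa^+)$; because $\fa^+$ is a salient cone, Hausdorff limits of convex cones in $\fa^+$ are again convex, so $\L$ is convex. Meanwhile, $\Ga$ is $\{\alpha_1,\alpha_3\}$-Anosov in $G$ by \Cref{ta}, so $\sigma_n$ is eventually Anosov and in particular discrete. Since $\Ga$ is Zariski-dense in the reductive subgroup $H$, the Zariski closure of $\Ga$ is reductive, and \Cref{LC} (with $\theta=\Pi$) yields $\L\supset\L_\Ga$. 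On the other hand, \Cref{ta} ensures that $\L_\Ga$ contains points in both $V_1\setminus V_0$ and $V_2\setminus V_0$; the straight-line segment between any such pair leaves $V_1\cup V_2$ while staying inside $\fa^+$, so $\L_\Ga$ is not convex. Therefore $\L\supsetneq\L_\Ga$, proving $\L_{\sigma_n(\Ga)}\not\to\L_\Ga$.

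The main obstacle I expect is the first step: producing Zariski-dense perturbations arbitrarily close to $\id_\Ga$ despite $\Ga$ lying entirely in the proper subgroup $H$. Once the openness-and-density argument for Zariski-dense representations is in place, the remainder is almost formal, combining \Cref{be0}, \Cref{LC}, and the folded-plane geometry of $\mu(H)$ recorded in the inclusions $V_1\cup V_2\subset\fa^+$ with $V_1\cap V_2 = V_0$.
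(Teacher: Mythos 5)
Your proposal is correct, and its overall strategy coincides with the paper's: perturb $\Ga$ to Zariski-dense subgroups, whose limit cones are then convex by Benoist, and play this against the non-convexity of $\L_\Ga$ via lower semicontinuity. Two steps are implemented differently. For producing the perturbations, the paper keeps $\sigma_n(a)=a$ and moves only $b$: it invokes Tits' result that the proper Zariski-connected algebraic subgroups containing the loxodromic element $a$ lie in a proper Zariski-closed set $\cal Z$, and picks loxodromic $b_n\to b$ outside $\cal Z$ (using Zariski-density of loxodromic elements); you instead quote the A'Campo--Burger openness of the Zariski-dense locus in $\Hom(F_2,G)\cong G\times G$ together with Euclidean density of non-empty Zariski-open subsets of the smooth connected variety $G\times G$. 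Both are legitimate, and the paper itself uses the A'Campo--Burger openness elsewhere (Corollary \ref{co}); your route just needs the (true, standard) non-emptiness input that $\SL(4,\R)$ contains Zariski-dense rank-two free subgroups. For the conclusion, the paper argues concretely: it builds open cones $\C_1,\C_2$ around $V_1,V_2$ meeting only along $V_0$, uses lower semicontinuity to find unit vectors $u_n\in\L_{\sigma_n(\Ga)}$ near $V_2-V_0$ and the fixed point $\mu(a)/\|\mu(a)\|\in V_1-V_0$ (free, since $\sigma_n(a)=a$), and shows the convex combination $w_n$ lies in $\L_{\sigma_n(\Ga)}$ but outside $\C_1\cup\C_2$, so upper semicontinuity fails. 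You argue abstractly: by compactness of $\mathsf C(\fa^+)$ any subsequential limit $\L$ exists, it is convex because Hausdorff limits of convex cones in the salient cone $\fa^+$ are convex, and $\L\supset\L_\Ga$ by Proposition \ref{LC} (using that the Zariski closure $H$ is reductive, so $\L_\Ga=\L_\Ga^{\Jor}$), whence $\L\ne\L_\Ga$. Your version is shorter and avoids the explicit cone bookkeeping, at the cost of relying on the full strength of Proposition \ref{LC} and the (correct but worth stating carefully) stability of convexity under Hausdorff limits; the paper's version is more hands-on and exhibits the specific directions in which the limit cone jumps.
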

\begin{proof} Since the diagonal entries of $a=\exp v $ are all distinct, $a$ is a loxodromic  element of $G=\SL(4,\br)$.
It follows from
  \cite[Proposition 4.4]{Tits_free} that  the union of all Zariski-closed and Zariski-connected proper subgroups of $G$ containing  $a$ is contained in a proper Zariski-closed subset of $G$, say $\cal Z$.
Since $\Ga$ is not Zariski dense in $G$, we have 
$$b\in \cal Z.$$ Since $G-\cal Z$ is Zariski open and the set of all loxodromic elements of $G$ is Zariski dense (\cite{Pr}, \cite[Corollary 1]{Be}),
we can find a sequence
 $b_n\in G-\cal Z$ of loxodromic elements converging to $b$.

Since $\Ga$ is a free group generated by $a$ and $b$, we can consider  the representation
$ \sigma_n : \G \to G$ given
by $$ \sigma_n(a)= a\quad\text{and}\quad  \sigma_n(b)= b_n .$$
Then  $\sigma_n \to \i_\Ga$ in $\Hom(\Ga, G)$ as $n\to \infty$.
Let $\theta=\{\alpha_1, \alpha_3\}$.
Since $\Ga$ is $\theta$-Anosov by Lemma \ref{ta} and
the set of $\theta$-Anosov representations is open in $\Hom(\Ga, G)$ \cite{GW}, each
$\sigma_n(\Ga)$ is a $\theta$-Anosov subgroup, and hence a discrete subgroup of $G$ for all sufficiently large $n$.

 Since $\sigma_n(\Ga)$ is Zariski dense in $G$ (as $b_n\notin \cal Z$),
the limit cone $\L_{\sigma_n(\G)}$ must be a convex cone of $\fa^+$.
We claim that the upper-semicontinuity of the limit cones fail: we can find an open cone  $\cal C\subset \fa^+$ containing $\L_\Ga-\{0\}$ which does not contain $\L_{\sigma_n(\Ga)}-\{0\}$ for any large enough $n$.

Consider two open cones $\C_1$ and $\C_2$ in $\fa^+$ containing $V_1-\{0\}$ and $V_2-\{0\}$ respectively and $\C_1\cap \C_2 =V_1\cap V_2-\{0\}=V_0-\{0\}$.  By taking $\C_1$ and $\C_2$ small enough,
we can assume that for any unit vectors $w_1\in \C_1-V_0$ and $w_2\in\C_2-V_0$,
the convex combination $\frac{1}{2} (w_1+w_2)$ does not belong to $\C_1\cup \C_2$.
Set $\C=\C_1\cup \C_2$. Since 
$$\L_\Ga\cap V_i\not\subset V_0 $$
by Lemma \ref{ta},
there must be a sequence of unit vectors $u_n\in \L_{\sigma_n(\Ga)}$ converging to some unit vector
$ u\in V_2-V_0$. By the choice of $a$, we have $\mu(a)=(v_1, v_2, 0, v_3)\in V_1-V_0$.
Since $\mu(a)\in V_1-V_0\subset \C_1-V_0$ and $u_n\in V_2-V_0\subset \C_2-V_0$ for all sufficiently large $n$,
we have  $$w_n\coloneqq\frac{1}{2}(u_n +\|\mu(a)\|^{-1}\mu(a))\not\in \C$$ for all large $n$. On the other hand, $w_n$ must belong to in $\L_{\sigma_n(\Ga)}$ by the  convexity of $\L_{\sigma_n(\Ga)}$. 
This shows that  $\L_{\sigma_n(\Ga)}-\{0\} \not\subset \cal C$ for all large enough $n$. 
This finishes the proof. \end{proof}

Note that $\Ga$ is not Zariski dense in $\SL(4, \br)$ in the above example. We remark that
Danciger-Guéritaud-Kassel announced that there exists a Zariski dense Anosov subgroup of $\PSL(2,\br)\times \PSL(2,\br)\times \PSL(2,\br)$ where the limit cone does not vary continuously.

\section{Continuity of \texorpdfstring{$\theta$-}-limit cones}
As before, let $G$ be a connected semisimple real algebraic group.
We fix a non-empty subset $\theta\subset \Pi$.
In this section, we  prove the continuity of $\theta$-limit cones $\L_{\G}^{\theta} $ of $\theta$-Anosov subgroups $\G$ of $G$ by a similar argument as the proof of Theorem \ref{m1}.
 
\begin{theorem}\label{thm:limitcone2}\label{tl}
    Let $\G$ be a $\theta$-Anosov subgroup of $G$ such that
   $\L_{\G}^{\theta}$ is convex (e.g., Zariski dense).
     Then  the map
    \[
     \sigma \mapsto \L_{\sigma(\G)}^{\theta}\in \mathsf C(\fa_\theta^+)
    \]
    is continuous at $\id_\Ga\in \Hom(\Ga, G)$.
    \end{theorem}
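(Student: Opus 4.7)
The plan is to split the continuity of $\sigma \mapsto \L_{\sigma(\Ga)}^\theta$ at $\id_\Ga$ into lower and upper semicontinuity, mirroring the template of \Cref{m1}. Lower semicontinuity is immediate from \Cref{LCA}. For upper semicontinuity, given an open cone $\widetilde{\cal C}_0 \subset \fa_\theta^+$ containing $\L_\Ga^\theta - \{0\}$, I aim to construct a $(\theta \cup \i(\theta))$-admissible cone $\cal D \subset \fa^+$ with $\L_\Ga \subset \cal D$ and $p_\theta(\cal D - \{0\}) \subset \widetilde{\cal C}_0$. Then \Cref{ucp}, applied with the open cone $\cal C_0 := p_\theta^{-1}(\widetilde{\cal C}_0) \cap \fa^+$ (which contains $\cal D - \{0\}$), gives $\L_{\sigma(\Ga)} - \{0\} \subset \cal C_0$ for $\sigma$ in some neighborhood of $\id_\Ga$; applying $p_\theta$ then yields $\L_{\sigma(\Ga)}^\theta - \{0\} \subset \widetilde{\cal C}_0$, the desired upper semicontinuity.

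In the symmetric case $\theta = \i(\theta)$, I would construct $\cal D$ as follows. By the convexity of $\L_\Ga^\theta$, pick a closed convex $\i$-invariant cone $\widetilde{\cal D} \subset \fa_\theta^+$ with $\L_\Ga^\theta - \{0\} \subset \inte \widetilde{\cal D}$ and $\widetilde{\cal D} \subset \widetilde{\cal C}_0 \cup \{0\}$. Then set
\[
\cal D := p_\theta^{-1}(\widetilde{\cal D}) \cap \fa^+ \cap \bigcap_{\alpha \in \theta,\, w \in \cal W_\theta} \{v : (w\alpha)(v) \ge \e \|v\|\} \cap \bigcap_{\alpha \in \theta} \{v : \omega_\alpha(v) \ge \e \|v\|\},
\]
with $\e > 0$ sufficiently small that $\L_\Ga \subset \cal D$. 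Such an $\e$ exists because the $\theta$-Anosov condition together with the quasi-isometric embedding of $\Ga$ into $G$ yields uniform positive lower bounds on $(w\alpha)(v)/\|v\|$ and $\omega_\alpha(v)/\|v\|$ over $v \in \L_\Ga \cap \fa^1$, using that each $w\alpha$ is a positive root lying outside the span of $\Pi - \theta$. The $\theta$-admissibility of $\cal D$ then follows: $\i$-invariance from $\theta = \i(\theta)$ and the fact that $\i$ permutes the defining constraints; $\cal W_\theta$-convexity because every defining condition other than $\fa^+$ is $\cal W_\theta$-invariant (the intersection over $w \in \cal W_\theta$ being manifestly so, and $\omega_\alpha \in \fa_\theta^*$ being $\cal W_\theta$-fixed), so $\cal W_\theta \cal D$ equals the intersection of the remaining $\cal W_\theta$-invariant conditions with the convex cone $\cal W_\theta \fa^+$; and avoidance of the walls $\ker \alpha$ for $\alpha \in \theta$ by taking $w = e$ in the defining condition. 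Finally, the $\omega_\alpha$-bound forces $p_\theta(v) \ne 0$ whenever $v \in \cal D - \{0\}$ (since $\omega_\alpha(v) = \omega_\alpha(p_\theta(v))$), so $p_\theta(\cal D - \{0\}) \subset \widetilde{\cal D} - \{0\} \subset \widetilde{\cal C}_0$.

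The main obstacle I anticipate is the asymmetric case $\theta \ne \i(\theta)$, where $p_\theta^{-1}(\widetilde{\cal D})$ is $\cal W_\theta$-invariant but generally not $\cal W_{\theta \cup \i(\theta)}$-invariant, so the naive construction fails to produce a $(\theta \cup \i(\theta))$-admissible cone. I expect the remedy to involve also intersecting with the $\i$-counterpart $p_{\i(\theta)}^{-1}(\i(\widetilde{\cal D}))$ and symmetrizing the wall conditions over the larger Weyl group $\cal W_{\theta \cup \i(\theta)}$; the delicate point will be verifying that $\L_\Ga$ still lies in the symmetrized cone while its $p_\theta$-image remains inside $\widetilde{\cal C}_0$, a balance for which the convexity of $\L_\Ga^\theta$ is precisely the right hypothesis.
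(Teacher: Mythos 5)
Your proposal is correct and follows essentially the same route as the paper: lower semicontinuity from Proposition \ref{LCA}, and upper semicontinuity by manufacturing a $(\theta\cup\i(\theta))$-admissible cone containing $\L_\Ga$ whose $p_\theta$-image stays inside the prescribed open cone, then invoking Proposition \ref{ucp}; the paper packages the cone construction as Lemma \ref{theta}, using abstract separating linear forms $h_\alpha\circ\op{Ad}_w$ where you use the explicit inequalities $(w\alpha)(v)\ge\e\|v\|$ and $\omega_\alpha(v)\ge\e\|v\|$ (which work, since $w\alpha-\alpha$ is a non-negative combination of $\Pi-\theta$ and hence $w\alpha\ge\alpha$ on $\fa^+$). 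The only incomplete point is the case $\theta\ne\i(\theta)$, but the remedy you anticipate is exactly what the paper does: intersect with the $\i$-image of the $p_\theta$-preimage and symmetrize over $\W_{\theta\cup\i(\theta)}$; the ``delicate point'' you flag resolves immediately because $p_{\i(\theta)}(\L_\Ga)=p_{\i(\theta)}(\i(\L_\Ga))=\i(p_\theta(\L_\Ga))=\i(\L_\Ga^\theta)$, so $\L_\Ga$ lies in the symmetrized cone while its $p_\theta$-image is still controlled by $\widetilde{\cal C}_0$ alone --- this is precisely why only convexity of $\L_\Ga^\theta$ (not of $\L_\Ga$) is needed.
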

Since the projection $p_\theta$ is $\W_\theta$-invariant, we have that $p_\theta^{-1}(\L_\Ga^\theta)$ is a $\theta$-convex cone for $\Ga$ as above, and this is why we do not require the $\theta$-convexity in Theorem \ref{thm:limitcone2}.

\begin{remark}\label{sss} When $\Ga$ is a
fundamental group of a closed negative curved manifold, embedded in $G$ as a Zariski dense $\theta$-Anosov subgroup, Sambarino \cite{Sam1} proved this theorem  using thermodynamic formalism.
Given the work of Bridgeman, Canary, Labourie and Sambarino \cite{BCLS} which provides the thermodynamic formalism for a general $\theta$-Anosov subgroup, his argument should extend to a general Zariski dense $\theta$-Anosov subgroup.

The purpose of this section is to present an entirely different proof of this result using Proposition \ref{ucp}. We believe that this geometric proof is more intuitive and has the potential to extend to a more general setup.
\end{remark}

As before, we need to know the existence of $\theta$-admissible cones provided by the following lemma:
\begin{lem}\label{theta}
  Let $\cal D \subset \fa^+$ be  a closed $\i$-invariant cone such that $p_\theta(\cal D)$ is convex
  and $\cal D\cap \ker \alpha=\{0\}$ for all $\alpha\in \theta$.
  Then for any open cone $\C_0$ in $\fa^+_\theta$ containing $p_\theta(\cal D) - \{0\}$, there exists a $(\theta\cup\i(\theta))$-admissible cone $\cal C$ in $\fa^+$ such that
  $$ \cal D -\{0\}\subset \inte\C \quad\text{ and}\quad p_\theta(\cal C) \subset \C_0 \cup\{0\}.$$
\end{lem}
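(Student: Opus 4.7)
Set $\tau := \theta \cup \i(\theta)$. The $\i$-invariance of $\cal D$ extends the hypothesis $\cal D \cap \ker\alpha = \{0\}$ from $\alpha \in \theta$ to $\alpha \in \tau$. My strategy is to first produce an auxiliary closed $\tau$-admissible cone $\cal D^* \supset \cal D$ with $p_\theta(\cal D^*) = p_\theta(\cal D)$, and then apply \Cref{con1} (with $\tau$ in place of $\theta$) to $\cal D^*$ to obtain $\C$.

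Define $\cal D^* := \op{conv}(\cal W_\tau \cal D) \cap \fa^+$. I would verify that $\cal D^*$ is a closed convex cone in $\fa^+$ that is $\i$-invariant (from $\i(\tau) = \tau$, so $\i$ normalizes $\cal W_\tau$, together with $\i$-invariance of $\cal D$ and $\fa^+$); that $\cal W_\tau \cal D^* = \op{conv}(\cal W_\tau \cal D) \cap \cal W_\tau \fa^+$ is convex as the intersection of two convex subsets of $\fa$ (the second being the union of Weyl chambers containing $\fa_\tau^+$, noted to be convex in the proof of \Cref{con1}); and that $\cal D^* \cap \ker\alpha = \{0\}$ for all $\alpha \in \tau$. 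This last point is the main technical step. The key root-theoretic input is that for $w \in \cal W_\tau$ and $\alpha \in \tau$, the root $w^{-1}\alpha$ has simple-root expansion $w^{-1}\alpha = \alpha + \sum_{\beta \in \Pi - \tau} n_\beta \beta$ with $n_\beta \ge 0$: since $w$ fixes $\fa_\tau$ pointwise, $(w^{-1}\alpha)|_{\fa_\tau} = \alpha|_{\fa_\tau}$, and since $\{\alpha'|_{\fa_\tau}\}_{\alpha' \in \tau}$ is a basis of $\fa_\tau^*$, the simple-root coefficients of $w^{-1}\alpha$ on $\tau$ are pinned down (1 on $\alpha$, 0 elsewhere), while positivity of $w^{-1}\alpha$ as a root forces $n_\beta \ge 0$. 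Consequently, for $u \in \cal D$, $(w^{-1}\alpha)(u) = \alpha(u) + \sum n_\beta \beta(u)$ is a sum of non-negative terms whose vanishing forces $\alpha(u) = 0$, hence $u = 0$. Thus $\cal W_\tau \cal D \cap \ker\alpha = \{0\}$; a convex-combination argument using $\alpha \ge 0$ on $\cal W_\tau \fa^+$ upgrades this to $\op{conv}(\cal W_\tau \cal D) \cap \ker\alpha = \{0\}$, which in particular shows that $\op{conv}(\cal W_\tau \cal D)$ is pointed and hence closed.

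Because $\cal W_\tau$ fixes $\fa_\theta \subset \fa_\tau$ pointwise, $p_\theta$ commutes with $\cal W_\tau$, so $p_\theta(\cal W_\tau \cal D) = p_\theta(\cal D)$; by the convexity hypothesis on $p_\theta(\cal D)$, we get $p_\theta(\op{conv}(\cal W_\tau \cal D)) = \op{conv}(p_\theta(\cal D)) = p_\theta(\cal D)$, and therefore $p_\theta(\cal D^*) = p_\theta(\cal D) \subset \C_0 \cup \{0\}$. To finish, pick an open cone $U \subset \fa_\theta$ containing $p_\theta(\cal D) - \{0\}$ with $U \cap \fa_\theta^+ \subset \C_0$, and set $\C_0' := p_\theta^{-1}(U) \cap \fa^+$: this is an open cone in $\fa^+$ containing $\cal D^* - \{0\}$ and satisfying $p_\theta(\C_0') \subset \C_0 \cup \{0\}$. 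Applying \Cref{con1} with $\theta$ replaced by $\tau$ to the $\tau$-admissible cone $\cal D^*$ and the open cone $\C_0'$ produces a $\tau$-admissible cone $\C \subset \C_0' \cup \{0\}$ with $\cal D^* - \{0\} \subset \inte \C$. Then $\cal D - \{0\} \subset \cal D^* - \{0\} \subset \inte \C$ and $p_\theta(\C) \subset p_\theta(\C_0') \subset \C_0 \cup \{0\}$, as required.
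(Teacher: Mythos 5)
Your proof is correct, but it takes a genuinely different route from the paper's. Writing $\tau=\theta\cup\i(\theta)$ as you do, the paper constructs $\C$ directly by a variant of the separating--hyperplane argument of \Cref{con1}: it forms the $\i$-invariant convex open cone $\C_1=p_\theta^{-1}(\C_0)\cap\i(p_\theta^{-1}(\C_0))\cap\fa^+_{\tau}$, chooses for each $\alpha\in\tau$ a linear form $h_\alpha$ separating $\ker\alpha\cap\fa^+$ from $\C_1\cup\W_{\tau}\cal D$, and sets $\C=p_{\tau}^{-1}(\bar\C_1)\cap\fa^+\cap H\cap\i(H)$. You instead manufacture an admissible hull $\cal D^*=\op{conv}(\W_\tau\cal D)\cap\fa^+$ of $\cal D$ with the same $p_\theta$-image and then invoke \Cref{con1} as a black box. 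The technical heart of your argument --- that $w^{-1}\alpha=\alpha+\sum_{\beta\in\Pi-\tau}n_\beta\beta$ with $n_\beta\ge0$ for $w\in\W_\tau$ and $\alpha\in\tau$, so that $\W_\tau\cal D$ meets each wall $\ker\alpha$ only at $0$ --- is correct, and it is in fact the precise justification for an assertion that the paper's proofs of \Cref{con1} and \Cref{theta} use without comment, namely that $\W_{\tau}\cal D-\{0\}\subset\{\alpha>0\}$. Your route buys reuse of \Cref{con1} and an explicit $\tau$-admissible cone containing $\cal D$; the paper's avoids convex-hull closedness issues. Two small points to tighten: (i) ``pointed hence closed'' is not quite the right invocation --- what your computation actually yields is $\op{conv}(\W_\tau\cal D)-\{0\}\subset\{\alpha>0\}$, whence the slice $\{\alpha=1\}$ of the hull is compact and closedness of the cone follows; (ii) the containment $\cal D^*-\{0\}\subset p_\theta^{-1}(U)$ implicitly uses that $p_\theta(v)\ne0$ for $v\in\cal D^*-\{0\}$, which you should note follows from $\cal D^*\cap\ker\alpha=\{0\}$ for $\alpha\in\theta$, since $\omega_\alpha=\omega_\alpha\circ p_\theta$ and $\omega_\alpha$ dominates a positive multiple of $\alpha$ on $\fa^+$. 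Neither is a genuine gap.
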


\begin{proof} 
Since 
$p_\theta(\cal D) $ is convex,
we may assume without loss of generality that $\cal C_0$ is convex.
Moreover, since $\cal D$ is $\i$-invariant, it follows that $p_{\theta\cup\i(\theta)}(\cal D)$ is $\i$-invariant.

Consider the $\i$-invariant open convex cone in $\fa_{\theta\cup\i(\theta)}$:
\[\C_1 \coloneqq p_\theta^{-1}(\C_0) \cap \i (p_\theta^{-1}(\C_0)) \cap \fa^+_{\theta\cup\i(\theta)}.\]  Since $p_{\theta\cup\i(\theta)}(\cal D)$ is $\i$-invariant, $\C_1$ contains $p_{\theta\cup\i(\theta)}(\cal D)-\{0\}$.

Let $\alpha\in \theta\cup\i(\theta)$. As in the proof of Lemma \ref{con1},
 both $\inte \fa_{\theta\cup\i(\theta)}^+$ and  $\cal W_\theta {} \cal D -\{0\}$ are contained in the open half-space $\{\alpha>0\}$. 
Since $\C_1$ is an open cone contained in $\fa_{\theta\cup\i(\theta)}^+$ and hence $\C_1\cap \ker\alpha=\emptyset$,
we can find a linear form $h_\alpha\in \fa^*$ such that
$$(\ker \alpha\cap \fa^+ ) -\{0\} \subset \{h_\alpha<0\} \quad\text{and}\quad 
(\cal C_1\cup  \cal W_{\theta \cup \i(\theta)} \cal D) -\{0\}\subset 
\{h_\alpha >0\} .$$

Now set $H\coloneqq\bigcap_{\alpha\in {\theta\cup\i(\theta)}, w\in \cal W_{\theta\cup\i(\theta)}} \{h_\alpha\circ \op{Ad}_{w} \ge 0\}$, which is clearly a $\cal W_{\theta\cup\i(\theta)}$-invariant convex cone. By our choice of $h_\alpha$, the interior
$\inte H$ contains $\cal D-\{0\}$. We have that $\i(H)$ is also a $\cal W_{\theta\cup\i(\theta)}$-invariant convex cone whose interior contains $\cal D-\{0\}=\i(\cal D)-\{0\}$.

Define \[\C\coloneqq 
p_{\theta\cup\i(\theta)}^{-1}(\bar\C_1) \cap \fa^+\cap  H \cap \i(H).\]
Then $\C$ is a closed cone  whose interior contains $\cal D-\{0\}$, disjoint from
$\ker\alpha$ for all $\alpha\in \theta\cup\i(\theta)$,
 $p_\theta(\cal C)\subset \C_0\cup\{0\}$,
 and $\i(\cal C)=\cal C$.
 Since $p_{\theta\cup\i(\theta)} : \fa \to \fa_{\theta\cup\i(\theta)}$ is $\W_{\theta\cup\i(\theta)}$-invariant,  the preimage
$p_{\theta\cup\i(\theta)}^{-1}(\C_1)$  is a $\W_{\theta\cup\i(\theta)}$-invariant convex cone of $\fa^+$.
Hence we have $$\cal W_{\theta\cup\i(\theta)} \C  = p_{\theta\cup\i(\theta)}^{-1}(\bar\C_1) \cap (\cal W_{\theta\cup\i(\theta)} \fa^+)\cap H\cap \i(H);$$ 
    so $\cal W_{\theta\cup\i(\theta)}  \C $ is convex, being the intersection of convex subsets.
    Therefore $\C$ is $({\theta\cup\i(\theta)})$-admissible.   
\end{proof}

\begin{proof}[Proof of Theorem \ref{thm:limitcone2}]
The lower semicontinuity follows from \Cref{LCA}.
To prove the upper semicontinuity, let $\C_0$ be any open cone in $\fa^+_\theta$ containing $\L_{\G}^\theta-\{0\}$. 

By applying Lemma \ref{theta} with $\cal D=\L_\Ga$,
we can choose a $({\theta\cup\i(\theta)})$-admissible closed cone
 $\C\subset \fa^+$
    such that $$\L_\Ga-\{0\}\subset \inte \C\quad \text{ and}\quad 
     \C\subset  p_\theta^{-1} (\C_0\cup\{0\}).$$ 
By Proposition \ref{ucp},
for all sufficiently large $n$,
$$\L_{\sigma_n(\G)}  \subset p_\theta^{-1} (\C_0\cup\{0\}),$$
and hence
$\L_{\sigma_n(\Ga)}^{\theta }  \subset \cal C_0 \cup\{0\}$.
This implies the upper semicontinuity.
\end{proof}

\section{\texorpdfstring{$\theta$-}-growth indicators vary continuously}
Let $G$ be a connected semisimple real algebraic group.  We fix a non-empty subset $\theta\subset \Pi$.
In this section, we show that the $\theta$-growth indicators vary continuously  in the subspace of Zariski dense $\theta$-Anosov representations of $\Hom(\Ga, G)$.
We consider the $\theta$-Cartan projection:
$$\mu_\theta=p_\theta\circ \mu:G\to \fa_\theta^+.$$

\begin{Def}[$\theta$-growth indicator] \label{growth}  Let $\Ga$ be a $\theta$-discrete subgroup of $G$, i.e.,
$\mu_\theta|_{\Ga}$ is a proper map.
The  $\theta$-growth indicator $\psi_\Ga^{\theta}:\fa_\theta^+\to \{-\infty\}\cup \br   $ is defined  as follows: if $u \in \fa^+_\theta$ is non-zero,
\be\label{gi2} \psi_\Ga^{\theta}(u)=\|u\| \inf_{u\in \cal C}
\tau^\theta_{\mathcal C}\ee 
where $\cal C\subset \fa^+_\theta$ ranges over all open cones containing $u$, and $\psi_{\Ga}^{\theta}(0) = 0$.
Here $-\infty\le \tau^{\theta}_{\cal C}\le \infty$ denotes the abscissa of convergence of the series ${\mathcal P}^{\theta}_{\cal C}(s)=\sum_{\ga\in \Ga, \mu_\theta(\ga)\in \mathcal C} e^{-s\|\mu_\theta(\ga)\|}$.
\end{Def} This definition is equivalent to the one given in \eqref{ttt0},
as was studied in (\cite{Q2}, \cite{KOW}).
 The $\theta$-discreteness hypothesis is necessary for the $\theta$-growth indicator to be well-defined. It follows from Definition \ref{anosov} that $\theta$-Anosov subgroups are $\theta$-discrete.
For $\theta=\Pi$, we simply write $\psi_\Ga=\psi_\Ga^{\Pi}$.
For any discrete subgroup $\Ga<G$,
we have
\begin{equation}\label{rrr}
    \psi_\Ga\le 2\rho
\end{equation}
where $\rho=\frac{1}{2} \sum_{\alpha\in \Phi^+} m(\alpha) \alpha$ is the half sum of all positive roots of $G$ counted with multiplicity (cf. \cite{Q2}).

The main goal of this section is to prove the following theorem:
\begin{theorem}\label{thm:uc}\label{uc} Let $\G$ be a Zariski dense $\theta$-Anosov subgroup of $G$.
Let $\sigma_n\to \id_\Ga$ in $\Hom(\Ga, G)$. Then for any $v\in \inte \L_\Ga^{\theta}$,
we have
\be\label{eqn0:thm:uc} \lim_{n\to \infty} \psi^\theta_{\sigma_n(\Ga)}(v) = \psi^\theta_\Ga (v) .\ee
Moreover, the convergence is uniform on compact subsets of $\inte \L_{\Ga}^\theta$.
\end{theorem}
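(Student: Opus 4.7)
The plan is to reduce the continuity of $\sigma\mapsto \psi^\theta_{\sigma(\G)}(v)$ at $\id_\G$ to the continuity of the critical exponents $\sigma\mapsto \delta_{\phi,\sigma(\G)}$ at fixed linear forms $\phi\in \fat$ with $\phi>0$ on $\L_\G^\theta-\{0\}$, and then to exploit the convex duality
$$\delta_{\phi,\G}\;=\;\sup_{u\in \L_\G^\theta-\{0\}} \frac{\psi_\G^\theta(u)}{\phi(u)}$$
via supporting linear forms of the concave function $\psi_\G^\theta$ (recall that for Zariski dense $\theta$-Anosov $\G$, the growth indicator $\psi_\G^\theta$ is concave and positive on $\inte\L_\G^\theta$ by \cite{Q2}, \cite{KOW}).

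The key first task is to prove continuity of $\sigma\mapsto \delta_{\phi,\sigma(\G)}$ at $\id_\G$ for each such $\phi$ (a $\theta$-analogue of \Cref{m2}). The upper bound $\limsup_n\delta_{\phi,\sigma_n(\G)}\le \delta_{\phi,\G}$ follows by comparing Poincar\'e series on cones, using the Morse quasigeodesic control of perturbed orbit maps from \Cref{ucp}: choosing a $(\theta\cup\i(\theta))$-admissible cone $\C'$ slightly enlarging $\L_\G$ on which $\phi$ remains strictly positive (which exists by \Cref{con1} and \Cref{tl}), each $\mu_\theta(\sigma_n(\g))$ lies boundedly close to a point of $\C'$ of comparable norm, and the resulting cone decomposition of the Poincar\'e series yields the inequality. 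For the matching lower bound, I would invoke conformal measure theory: construct a $(\G,\delta_{\phi,\G}\phi)$-Patterson--Sullivan measure on the $\theta$-limit set $\La_\G^\theta\subset \F_\theta$ and transfer it to $\sigma_n(\G)$ via the Anosov boundary maps $\xi_\sigma:\p\G\to \F_\theta$, which vary continuously in $\sigma$; the shadow lemma then produces $\liminf_n\delta_{\phi,\sigma_n(\G)}\ge \delta_{\phi,\G}$.

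Granted continuity of $\delta_\phi$, the pointwise convergence follows from a two-sided argument. Fix $v\in\inte\L_\G^\theta$. For upper semicontinuity, concavity furnishes a supporting form $\phi_v\in \fat$ with $\phi_v(v)=\psi_\G^\theta(v)$ and $\phi_v\ge \psi_\G^\theta$ on $\L_\G^\theta$; after a small perturbation by a strictly positive form we may assume $\phi_v>0$ on $\L_\G^\theta-\{0\}$, so $\delta_{\phi_v,\G}=1$ and hence $\delta_{\phi_v,\sigma_n(\G)}\to 1$. Since $v\in \inte\L^\theta_{\sigma_n(\G)}$ for large $n$ by \Cref{co}, the sup formula yields
$$\psi^\theta_{\sigma_n(\G)}(v)\;\le\;\delta_{\phi_v,\sigma_n(\G)}\,\phi_v(v)\;\longrightarrow\;\psi_\G^\theta(v).$$
For lower semicontinuity, let $\phi_n\in \fat$ be supporting forms of $\psi_{\sigma_n(\G)}^\theta$ at $v$; the upper bound just proved, combined with concavity, bounds $\{\phi_n\}$ on a small ball around $v$ (which lies in $\inte\L_{\sigma_n(\G)}^\theta$ for large $n$ by \Cref{co}), so after extraction $\phi_n\to \phi_\infty$. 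Joint continuity of $(\phi,\sigma)\mapsto \delta_{\phi,\sigma(\G)}$ (by the same method as above) gives $\delta_{\phi_\infty,\G}=\lim \delta_{\phi_n,\sigma_n(\G)}=1$, whence $\phi_\infty\ge \psi_\G^\theta$ on $\L_\G^\theta$ and therefore $\psi^\theta_{\sigma_n(\G)}(v)=\phi_n(v)\to \phi_\infty(v)\ge \psi_\G^\theta(v)$.

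Uniform convergence on compact subsets of $\inte\L_\G^\theta$ is then automatic: each $\psi_{\sigma_n(\G)}^\theta$ is concave on the fixed open convex set $\inte \L_\G^\theta$, and pointwise convergence of concave functions on an open convex set to a continuous limit is always uniform on compact subsets. The principal obstacle is the Patterson--Sullivan transfer in the first step, which requires uniform shadow estimates and continuous dependence of the Gromov product cocycle and the Anosov boundary maps on $\sigma$; this is exactly where the Sullivan--McMullen-style conformal measure methods enter the higher-rank setting.
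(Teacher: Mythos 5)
Your overall skeleton coincides with the paper's: tangent linear forms at $v$, continuity of $\phi$-critical exponents, a compactness argument for the supporting forms of $\psi^\theta_{\sigma_n(\Ga)}$ at $v$, and concavity for the uniformity. Two remarks on where you diverge, one of which is a genuine gap. First, the minor points: you do not need to reprove the continuity of $\sigma\mapsto\delta_{\phi,\sigma(\Ga)}$ via Poincar\'e series and shadow lemmas --- this is Theorem \ref{bcls}, which the paper obtains from \cite[Proposition 8.1]{BCLS} combined with the continuity of $\theta$-limit cones (Theorem \ref{thm:limitcone2}); and the perturbation of $\phi_v$ in your upper bound is unnecessary (vertical tangency of $\psi^\theta_\Ga$, a consequence of the Anosov property \cite[Theorem 12.2]{KOW}, already forces the tangent form at an interior direction to be positive on $\L_\Ga^\theta-\{0\}$) and slightly off as stated: after adding a strictly positive form you only get $\delta_{\phi_v,\Ga}\le 1$ and $\phi_v(v)>\psi^\theta_\Ga(v)$, so you need an extra limit in the perturbation parameter.

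The genuine gap is in your lower-semicontinuity step. You extract $\phi_n\to\phi_\infty$ and then invoke ``joint continuity of $(\phi,\sigma)\mapsto\delta_{\phi,\sigma(\Ga)}$'' to get $\delta_{\phi_\infty,\Ga}=1$ and hence $\phi_\infty\ge\psi^\theta_\Ga$. But this presupposes that $\phi_\infty$ is positive on $\L_\Ga^\theta-\{0\}$, without which $\delta_{\phi_\infty,\Ga}$ is not finite and the continuity statement is vacuous; nothing in your compactness argument rules out that $\phi_\infty$ degenerates to a form vanishing on part of $\L_\Ga^\theta$ (a priori you only get $\phi_\infty\ge 0$ on a fixed subcone and $\phi_\infty(v)$ bounded above). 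Moreover, joint continuity along the moving sequence $(\phi_n,\sigma_n)$ is strictly stronger than the fixed-$\phi$ continuity of Theorem \ref{bcls} and is exactly where the uniform estimates you defer would have to live; as written, the step is circular with the statement being proved. The paper closes this by a different mechanism: each $\phi_n$ comes with a $(\sigma_n(\Ga),\phi_n)$-conformal measure $\nu_n$ on $\La^\theta_{\sigma_n(\Ga)}$ (Theorem \ref{larger}); a weak-$*$ limit $\nu$ of the $\nu_n$ is a $(\Ga,\phi_\infty)$-conformal measure supported on $\La_\Ga^\theta$ by the continuity of limit sets (Theorem \ref{lc}), and the classification of conformal measures on the limit set (Theorem \ref{larger}) then forces $\phi_\infty$ to be a tangent form, in particular $\phi_\infty\ge\psi^\theta_\Ga$ and $\phi_\infty\ne 0$. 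You should replace your joint-continuity appeal with this conformal-measure limit argument (or supply an independent proof that $\phi_\infty>0$ on $\L_\Ga^\theta-\{0\}$ together with the required uniformity in $\phi$). Your uniformity step via local uniform convergence of concave functions is fine and is equivalent to the paper's Arzel\`a--Ascoli argument.
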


We begin by recalling the following property of the $\theta$-growth indicator:
\begin{lem} (\cite{Q2}, \cite{KOW}) \label{stp} For any Zariski dense $\theta$-discrete subgroup $\Ga$ of $G$,
 $ \psi_\Gamma^{\theta} $  is upper semicontinuous and concave. 
   Moreover, $ \psi_\Gamma^{\theta} $ is strictly positive on $ \operatorname{int} \mathcal{L}_\Gamma^\theta $ and $\L_\Ga^\theta=\{\psi_\Ga^\theta\ge 0\}$.
\end{lem}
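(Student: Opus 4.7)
Upper semicontinuity is immediate from the definition. If $v_n \to v$ in $\fa_\theta^+ - \{0\}$ and $\mathcal{C}$ is any open cone containing $v$, then $v_n \in \mathcal{C}$ for all large $n$, so $\psi^\theta_\Ga(v_n) \le \|v_n\| \tau^\theta_{\mathcal{C}}$. Taking $\limsup_n$ and then infimum over such $\mathcal{C}$ yields $\limsup_n \psi^\theta_\Ga(v_n) \le \psi^\theta_\Ga(v)$. The equality $\L^\theta_\Ga = \{\psi^\theta_\Ga \ge 0\}$ follows from a parallel observation at the level of the Dirichlet series: since $\mu_\theta(\Ga)$ is locally finite by $\theta$-discreteness, $\tau^\theta_{\mathcal{C}}$ equals $-\infty$ when $\mathcal{C}$ contains only finitely many $\mu_\theta(\gamma)$ and is nonnegative otherwise. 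Hence $\psi^\theta_\Ga(v) \ge 0$ iff every open cone around $v$ meets $\mu_\theta(\Ga)$ infinitely often, iff $v \in \L^\theta_\Ga$.

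The main work is the concavity of $\psi^\theta_\Ga$. The plan is to follow Quint \cite{Q2} and its $\theta$-extension in \cite{KOW}. Given $v_1, v_2 \in \inte \L^\theta_\Ga$ and small $\e>0$, I would use Zariski density of $\Ga$ to produce, for all sufficiently large $T$, $(r,\e)$-Schottky families $F_i^{(T)} \subset \Ga$ in the sense of Benoist \cite{Be}, whose elements satisfy $\|\mu_\theta(\gamma) - T v_i\| \le \e T$ and
\[
 |F_i^{(T)}| \ge \exp\bigl(T\,\psi^\theta_\Ga(v_i)/\|v_i\| - \e T\bigr).
\]
The Schottky condition ensures that for $(f_1,f_2) \in F_1^{(T)} \times F_2^{(T)}$, the product map is injective and $\|\mu_\theta(f_1 f_2) - \mu_\theta(f_1) - \mu_\theta(f_2)\|$ is bounded by a universal constant (almost-additivity of Cartan projections in generic position). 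This yields at least $|F_1^{(T)}| \cdot |F_2^{(T)}|$ distinct elements of $\Ga$ whose $\mu_\theta$-image lies within $O(\e T)$ of $T(v_1+v_2)$, and a direct bookkeeping of this lower bound against the definition of $\psi^\theta_\Ga$ gives the midpoint inequality $\psi^\theta_\Ga(\tfrac{v_1+v_2}{2}) \ge \tfrac12(\psi^\theta_\Ga(v_1) + \psi^\theta_\Ga(v_2))$. Combined with upper semicontinuity and the homogeneity $\psi^\theta_\Ga(tv) = t \psi^\theta_\Ga(v)$ for $t>0$ (which follows from the $\|u\|$-rescaling in Definition \ref{growth}), this upgrades to full concavity on $\inte \L^\theta_\Ga$, and then extends to the boundary by upper semicontinuity.

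Strict positivity on $\inte \L^\theta_\Ga$ follows from concavity combined with the fact that $\delta^\theta_\Ga = \sup_{\|v\|=1}\psi^\theta_\Ga(v) > 0$: any non-elementary Zariski dense subgroup of a noncompact semisimple algebraic group has exponential growth in $\|\mu_\theta\|$, which is equivalent to $\delta^\theta_\Ga>0$. Choose $v_0$ with $\psi^\theta_\Ga(v_0) = \delta^\theta_\Ga > 0$, so $v_0 \in \L^\theta_\Ga$. For any $v \in \inte \L^\theta_\Ga$, since $v$ is relatively interior in the convex cone $\L^\theta_\Ga$, there exist $s \in (0,1)$ and $w \in \L^\theta_\Ga$ such that $v = sw + (1-s)v_0$; concavity and $\psi^\theta_\Ga(w) \ge 0$ then give $\psi^\theta_\Ga(v) \ge (1-s)\psi^\theta_\Ga(v_0) > 0$.

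The main obstacle is the concavity step, which requires Benoist's construction of Schottky families adapted to arbitrary interior directions of $\L_\Ga$ and the almost-additivity of the Cartan projection on their products. Since these are the precise technical contents of \cite{Q2, KOW}, my plan is to invoke those constructions rather than to reprove them, and then to deduce the remaining three properties by the elementary arguments above.
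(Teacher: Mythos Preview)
The paper does not give its own proof of Lemma~\ref{stp}; it is stated with the citations \cite{Q2} and \cite{KOW} and used as a black box. Your sketch follows precisely the strategy of those references: upper semicontinuity and $\L^\theta_\Ga=\{\psi^\theta_\Ga\ge 0\}$ from the definition, concavity via Benoist's Schottky families and almost-additivity of the Cartan projection (the core of Quint's argument), and strict positivity on $\inte\L^\theta_\Ga$ from concavity together with $\delta^\theta_\Ga>0$. There is nothing to compare against in this paper, and your outline is faithful to the cited sources.
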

 As a consequence of these properties, $ \psi_\Gamma^{\theta} $ is continuous on $ \operatorname{int} \mathcal{L}_\Gamma^\theta $, since it is a {\em real} valued concave function on $\L_\Ga$ and concave functions are continuous on the interior of their domain.

\subsection*{Continuity of $\theta$-limit sets}
Let $P_\theta$
denote a standard parabolic subgroup of $G$ corresponding to $\theta$; that is, $P_\theta$ is
generated by the centralizer of $A$ and all root subgroups $U_\alpha$, $\alpha \in \Phi^+\cup [\Pi-\theta]$ 
 where $[\Pi-\theta]$
denotes the set of all roots in $\Phi$ which are $\Z$-linear combinations of $\Pi-\theta$.
We set $$\F_\theta=G/P_\theta.$$
\begin{Def}[$\theta$-limit set] \label{tl}
For a discrete subgroup $\Ga<G$, the $\theta$-limit set $\La^\theta_\Ga$ is a closed subset of $\F_\theta$
 defined
as the set of all limit points
$\lim k_i P_\theta\in \F_\theta$ where $k_i\in K$ is a sequence such that
there exists $\ga_i=k_i a_i k_i'\in \Ga\cap K\exp \fa^+K$ with
$\alpha(\log a_i)\to \infty$ for all $\alpha\in \theta$.
\end{Def}

This is clearly a $\Ga$-invariant closed subset which may be empty in general. When $\Ga$ is Zariski dense in $G$, $\La^\theta_\Ga$ is the unique $\Ga$-minimal subset of $\cal F_\theta$ (\cite{Be}, \cite[Corollary 5.2, Lemma 6.3, Theorem 7.2]{Q1}, \cite[Lemma 2.13]{LO}).

One important feature of a $\theta$-Anosov subgroup $\Ga$ is that
$\Ga$ is a hyperbolic group. If $\partial_\infty\Ga$ denotes the Gromov boundary of $\Ga$,
then  there exists a $\Ga$-equivariant continuous injection $f:\partial_\infty \Ga\to \F_\theta$ such that for $\xi\ne \eta$ in $\partial_\infty\Ga$, $f(\xi)$ and $f(\eta)$ are antipodal, that is, the pair $(f(\xi), f(\eta))$ belongs to the unique open $G$-orbit in $G/P_\theta\times G/P_\theta$ under the diagonal action. Moreover the image $f(\partial \Ga)$ coincides with the $\theta$-limit set  $\La^\theta_\Ga$.

Let $\mathsf C(\F_\theta)$ be the space of all closed subsets of $\F_\theta$ equipped with the topology of Hausdorff convergence: $F_i\to F$ if and only if for all $\epsilon>0$, there exists $n\in\N$ such that for all $i\ge n$, $F$ lies in the $\epsilon$-neighborhood of $F_i$ and vice versa.
\begin{theorem}\cite[Theorem 5.13]{GW} 
\label{lc}  Let $\Ga$ be a $\theta$-Anosov subgroup of $G$.
If $\sigma_n\to \id_\Ga$ in $\Hom(\Ga, G)$,
then $\La_{\sigma_n(\Ga)}^\theta$ converges to $\La_\Ga^\theta$ in $\mathsf C(\F_\theta)$.
\end{theorem}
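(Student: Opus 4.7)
The plan is to reduce the Hausdorff convergence of $\theta$-limit sets to the uniform convergence of Anosov boundary maps, using the fact that a $\theta$-Anosov subgroup is word-hyperbolic with an intrinsic Gromov boundary that does not depend on the representation. First, since the $\theta$-Anosov condition is open in $\Hom(\Ga, G)$, we may assume $\sigma_n$ is $\theta$-Anosov for all $n$. Each $\sigma_n$ then carries a $\sigma_n(\Ga)$-equivariant continuous antipodal embedding $f_n: \partial_\infty \Ga \to \F_\theta$ whose image is precisely $\La_{\sigma_n(\Ga)}^\theta$, and similarly $\id_\Ga$ provides a map $f = f_{\id_\Ga}$ with $f(\partial_\infty \Ga) = \La_\Ga^\theta$.

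The main step is to establish that $f_n \to f$ uniformly on the compact space $\partial_\infty \Ga$. For each $\xi \in \partial_\infty \Ga$, choose a geodesic ray $(\ga_i^{(\xi)})$ in $(\Ga, |\cdot|)$ converging to $\xi$. The standard characterization of the Anosov boundary map (through Pl\"ucker-type embeddings using the fundamental weights $\omega_\alpha$ for $\alpha \in \theta$) gives
\[
f_n(\xi) = \lim_{i\to\infty} \sigma_n(\ga_i^{(\xi)}) P_\theta, \qquad f(\xi) = \lim_{i\to\infty} \ga_i^{(\xi)} P_\theta,
\]
with the rate of convergence controlled by $\alpha(\mu(\sigma_n(\ga_i^{(\xi)})))$ for $\alpha \in \theta$. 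The Anosov inequality \eqref{eqn:ta} gives an exponential gap, and the local-to-global principle (Theorem~\ref{thm:LTG}), exactly as used in Proposition~\ref{ucp}, shows that the Anosov constant $C$ in \eqref{eqn:ta} can be chosen \emph{uniformly} over all $\sigma_n$ sufficiently close to $\id_\Ga$. Combined with $\sigma_n(s) \to s$ on a finite generating set $S$ of $\Ga$, this yields a uniform estimate of the form
\[
\sup_{\xi \in \partial_\infty \Ga} d_{\F_\theta}(f_n(\xi), f(\xi)) \; \xrightarrow[n\to\infty]{} \; 0.
\]

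Finally, since $\partial_\infty \Ga$ is a compact metric space and $f_n, f$ are continuous maps into $\F_\theta$, uniform convergence $f_n \to f$ implies Hausdorff convergence of images $f_n(\partial_\infty \Ga) \to f(\partial_\infty \Ga)$ in $\mathsf C(\F_\theta)$. Together with $f_n(\partial_\infty \Ga) = \La_{\sigma_n(\Ga)}^\theta$ and $f(\partial_\infty \Ga) = \La_\Ga^\theta$, this yields the claim.

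The main obstacle is the uniform convergence of the boundary maps. Pointwise convergence is essentially immediate from the convergence $\sigma_n \to \id_\Ga$ on finite words and the Anosov property of each $\sigma_n$, but uniformity over $\partial_\infty \Ga$ genuinely requires the uniform Anosov constant in a neighborhood of $\id_\Ga$ (equivalently, the uniform quasi-isometric embedding of the orbit maps into $X$), which is the content of the Morse local-to-global machinery invoked in Section~5.
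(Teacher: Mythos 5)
The paper does not prove this statement; it is quoted directly from Guichard--Wienhard \cite[Theorem 5.13]{GW}, so there is no internal proof to compare against. That said, your outline is essentially the standard argument for this result and is correct in structure: reduce Hausdorff convergence of the limit sets to uniform convergence of the boundary maps $f_n\colon \partial_\infty\Ga\to\F_\theta$, which in turn rests on (a) pointwise convergence coming from $\sigma_n\to\id_\Ga$ on finite balls of $\Ga$, and (b) a convergence rate for $f_n(\xi)$ along geodesic rays that is uniform in $n$ and $\xi$, supplied by uniform Anosov (equivalently, uniform Morse) constants on a neighborhood of $\id_\Ga$ --- exactly the uniformity extracted in Claim A of the proof of Proposition \ref{ucp}. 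The final step, that uniform convergence of continuous maps on the compact space $\partial_\infty\Ga$ implies Hausdorff convergence of their images, is correct. Two remarks. First, the identity $f_n(\xi)=\lim_i \sigma_n(\ga_i^{(\xi)})P_\theta$ is not literally right: the basepoint $eP_\theta$ need not be in general position with respect to the repelling points of the sequence, so you should instead use the Cartan attractors $k_iP_\theta$ coming from the Cartan decomposition $\sigma_n(\ga_i^{(\xi)})=k_i\exp(\mu(\sigma_n(\ga_i^{(\xi)})))k_i'$, which is also how the paper's Definition \ref{tl} describes $\La_\Ga^\theta$; your parenthetical about fundamental weights suggests you have this in mind, but the displayed formula should be corrected. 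Second, the crux --- the uniform exponential estimate $d(k_iP_\theta, f_n(\xi))\le Ce^{-\e i}$ with constants independent of $n$ and $\xi$, followed by the three-term triangle inequality using that the $i$-ball of $\Ga$ is finite --- is asserted rather than carried out; this is where all the work lies, and as written your proposal is a correct roadmap rather than a complete proof.
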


\subsection*{Conformal measures on limit sets} 
The $\frak a$-valued Busemann map $\beta: \cal F\times G \times G \to\frak a $ is defined as follows: for $\xi\in \cal F$ and $g, h\in G$,
$$  \beta_\xi ( g, h):=\sigma (g^{-1}, \xi)-\sigma(h^{-1}, \xi)$$
where  $\sigma(g^{-1},\xi)\in \fa$ 
is the unique element such that we have the Iwasawa decomposition $g^{-1}k \in K \exp (\sigma(g^{-1}, \xi)) N$ for any $k\in K$ with $\xi=kP$ (see \cite{Q1} and \cite[Definition 3.1]{LO}).

The space $\fa_\theta^*$ of all linear forms on $\fa_\theta$ can be identified with the following space:
$$\fa_\theta^*=\{\phi\in \fa^*: \phi=\phi \circ p_\theta\} .$$
In particular, we may regard a linear form on $\fa_\theta$ as a linear form on the whole space $\fa$.
\begin{Def} For $\phi\in \fa_\theta^*$, a probability measure $\nu$ on $\F_\theta$ is called a $(\Ga, \phi)$-conformal measure if
for all $\ga\in \Ga$ and $\xi\in \F_\theta$,
\be\label{conf} \frac{d\ga_*\nu}{d\nu}(\xi)= e^{\phi(\beta_\xi (o, \ga o))} .\ee 
\end{Def}

A linear form $\phi\in \fa_\theta^*$ is said to be  tangent to $\psi_\Ga^\theta$ at a non-zero vector $v\in \fa_\theta^+$ if
$$\text{ $\phi\ge \psi_\Ga^\theta\quad $ and $\quad \phi(v)=\psi_\Ga^\theta(v)$.}$$

The following gives a complete classification of conformal measures supported on the $\theta$-limit sets:
\begin{theorem}\label{larger} (\cite{LO}, \cite{Sam}, \cite[Corollary 1.13]{KOW}) Let $\Ga$ be a Zariski dense $\theta$-Anosov subgroup of $G$.
 For any unit vector $v\in \inte\L^\theta_\Ga$, there exists a unique  tangent linear form $\phi_v\in \fa_\theta^*$ to $\psi_{\Ga}^{\theta}$ at $v$ and 
 there exists a unique $(\Ga, \phi_v)$-conformal measure $\nu_v$ on $\La_\Ga^{\theta}$. 
 
 Moreover, the assignment $v\mapsto \phi_v \mapsto  \nu_v$ gives bijections among 
 the set of all unit vectors in $\inte\L^\theta_\Ga$, the set of all tangent linear forms to
 $\psi_\Ga^\theta$, and the set of all $\Ga$-conformal measures supported on $\La_\Ga^\theta$.
\end{theorem}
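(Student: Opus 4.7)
The plan is to construct the bijection $v\mapsto\phi_v\mapsto\nu_v$ in three stages, using strict concavity of $\psi_\Ga^\theta$ and a Patterson--Sullivan style argument. \textbf{Stage 1 (the bijection $v\leftrightarrow\phi_v$):} By \Cref{stp}, $\psi_\Ga^\theta$ is concave, upper semicontinuous, and strictly positive on $\inte\L_\Ga^\theta$. For each unit $v\in\inte\L_\Ga^\theta$, Hahn--Banach applied to the hypograph of $\psi_\Ga^\theta$ produces at least one $\phi\in\fa_\theta^*$ with $\phi\ge\psi_\Ga^\theta$ on $\fa_\theta^+$ and $\phi(v)=\psi_\Ga^\theta(v)$. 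Uniqueness of $\phi_v$ at a fixed $v$, and injectivity of $v\mapsto\phi_v$, both reduce to strict concavity of $\psi_\Ga^\theta$ on $\L_\Ga^\theta$: this is Quint's theorem for Zariski dense discrete subgroups in the $\theta=\Pi$ case, and the $\theta$-version is proved in \cite{KOW}. Strict concavity guarantees that each tangent hyperplane meets the graph of $\psi_\Ga^\theta$ in a single ray, so every tangent linear form is realized at exactly one unit direction in $\inte\L_\Ga^\theta$.

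\textbf{Stage 2 (construction of $\nu_v$):} Fix $\phi=\phi_v$. Tangency of $\phi$ at $v\in\inte\L_\Ga^\theta$ together with the characterization \eqref{ttt0} of $\psi_\Ga^\theta$ forces the Poincar\'e series
\[
  \mathcal P_\phi(s)\;=\;\sum_{\ga\in\Ga} e^{-s\,\phi(\mu(\ga))}
\]
to have critical exponent exactly $1$. I would then run Patterson's construction: form $\nu_s=\mathcal P_\phi(s)^{-1}\sum_\ga e^{-s\phi(\mu(\ga))}\delta_{\ga o}$, inserting Patterson's subexponential weight if $\mathcal P_\phi$ converges at $s=1$, push the atoms to $\F_\theta$ via a continuous Cartan-type extension of $\ga\mapsto \ga P_\theta$, and extract a weak-$*$ subsequential limit $\nu_v$ as $s\searrow 1$. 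The conformality relation \eqref{conf} then follows from the cocycle identity for $\beta$ combined with the asymptotic $\beta_\xi(o,\ga o)=\mu(\ga^{-1})+O(1)$ along sequences $\ga^{-1} o\to\xi$; this asymptotic is where the $\theta$-Anosov hypothesis enters, via uniform contraction of $\ga^{-1}$ near its attractor in $\F_\theta$. Support on $\La_\Ga^\theta$ is automatic from the accumulation of the atoms on the limit set.

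\textbf{Stage 3 (uniqueness and closing the third bijection):} This is the main obstacle. I would first establish a shadow lemma: for any $(\Ga,\phi)$-conformal measure $\nu$ on $\La_\Ga^\theta$ there exist $R_0,C>0$ such that
\[
  C^{-1}e^{-\phi(\mu(\ga))}\;\le\;\nu\bigl(\mathrm{shad}_R(\ga o)\bigr)\;\le\; C\,e^{-\phi(\mu(\ga))}\qquad(\ga\in\Ga,\ R\ge R_0),
\]
using uniform $\theta$-regularity of $\mu(\ga)$ and uniform transversality on $\La_\Ga^\theta$ provided by the Anosov property. The shadow lemma pins down the local scale of $\nu$, and a Vitali-type comparison then shows that any two $(\Ga,\phi)$-conformal measures on $\La_\Ga^\theta$ are mutually absolutely continuous with bounded Radon--Nikodym derivative. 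To upgrade this to equality I would assemble a Bowen--Margulis--Sullivan measure on $\Ga\ba G$ from the product of $\nu$ and an analogous $(\Ga,\phi\circ\i)$-conformal measure on $\La_\Ga^{\i(\theta)}$; this measure is finite for $\theta$-Anosov $\Ga$, its frame flow is ergodic, and a Hopf-type argument forces the $\Ga$-invariant Radon--Nikodym derivative to be almost surely constant, hence equal to $1$ after normalization. Finally, the same shadow lemma applied in reverse forces the exponent $\phi$ of any $(\Ga,\phi)$-conformal measure on $\La_\Ga^\theta$ to be tangent to $\psi_\Ga^\theta$ at an interior direction, which yields surjectivity and closes the third bijection.
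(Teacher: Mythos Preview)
This theorem is not proved in the paper: it is stated with attributions to \cite{LO}, \cite{Sam}, and \cite[Corollary 1.13]{KOW}, and no proof follows. The paper invokes it as a black box in the proof of \Cref{thm:uc} (existence of a tangent form at $v$, and the fact that any weak limit of conformal measures must have exponent dominating $\psi_\Ga^\theta$). So there is no ``paper's own proof'' to compare your sketch against.

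That said, your outline is broadly in line with how the cited references proceed: Patterson--Sullivan construction for existence, shadow estimates and ergodicity of the associated BMS measure for uniqueness, and the concavity/vertical-tangency properties of $\psi_\Ga^\theta$ for the bijection with interior directions. Two small points if you intend to flesh this out. First, in Stage~1 you appeal only to strict concavity, but the statement that \emph{every} tangent linear form is tangent at an interior direction (surjectivity onto $\inte\L_\Ga^\theta$, and nothing on the boundary) also needs the vertical-tangency property of $\psi_\Ga^\theta$---the paper invokes exactly this in the proof of \Cref{thm:uc} via \cite[Theorem 12.2]{KOW}. Second, in Stage~2 the Patterson construction is carried out on a compactification (or equivalently with measures on $\F_\theta$ directly), not by ``pushing atoms via $\ga\mapsto\ga P_\theta$''; the conformality comes from the Iwasawa cocycle identity rather than an asymptotic $\beta_\xi(o,\ga o)=\mu(\ga^{-1})+O(1)$, which is not literally true in higher rank.
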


\subsection*{Continuous variations of growth indicators} If $\phi\in \fa_\theta^*$ is positive on $\L_\Ga^\theta-\{0\}$,
the critical exponent
$0<\delta_{\phi, \Ga}<\infty$, which is the abscissa of convergence of the series
$s\mapsto \sum_{\ga\in \Ga} e^{-s\phi(\mu(\ga))}$, is well-defined \cite[Lemma 4.2]{KOW} and is equal to
\be\label{ct} \delta_{\phi, \Ga} \coloneqq\limsup_{T\to \infty}\frac{1}{T}{\log \#\{\ga\in \Ga: \phi(\mu(\ga))\le T\}}.
\ee 

\begin{theorem} \label{bcls}
    Let $\Ga<G$ be a $\theta$-Anosov subgroup with $\L_\Ga^{\theta}$ convex. Let $\phi \in \fa_{\theta}^*$ be positive on $\L_{\Ga}^{\theta}-\{0\}$. Then for all $\sigma\in \Hom(\Ga, G)$
    sufficiently close to $\id_\Ga$, the critical exponent $0< \delta_{\phi, \sigma(\Ga)}<\infty$ is well-defined
    and the map $$\sigma\mapsto \delta_{\phi, \sigma(\Ga)}$$ is continuous at $\id_\Ga\in \Hom(\Ga, G)$.
    Moreover, if $D$ is an analytic family of $\theta$-Anosov representations in
    $\Hom(\Ga, G)$, then $\sigma\mapsto \delta_{\phi, \sigma(\Ga)}$ is analytic in $D$.
\end{theorem}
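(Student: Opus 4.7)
The plan is to split the theorem into three claims---well-definedness of $\delta_{\phi, \sigma(\Ga)}$ near $\id_\Ga$, continuity at $\id_\Ga$, and analyticity on the family $D$---each deduced from the limit-cone and growth-indicator continuity results already established.

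\emph{Well-definedness.} Since $\phi$ is continuous and strictly positive on the compact set $\L_\Ga^\theta \cap \fa_\theta^1$, I would first choose an open cone $\C \subset \fa_\theta^+$ with $\L_\Ga^\theta - \{0\} \subset \C$ on which $\phi$ admits a uniform positive lower bound on $\C \cap \fa_\theta^1$. Theorem \ref{tl} then yields $\L_{\sigma(\Ga)}^\theta - \{0\} \subset \C$ for all $\sigma$ in a neighborhood of $\id_\Ga$, so that $\phi > 0$ on $\L_{\sigma(\Ga)}^\theta - \{0\}$, and \cite[Lemma 4.2]{KOW} guarantees $0 < \delta_{\phi, \sigma(\Ga)} < \infty$.

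\emph{Continuity.} I would use the variational formula
\begin{equation*}
    \delta_{\phi, \Ga'} = \sup_{v \in \L_{\Ga'}^\theta - \{0\}} \frac{\psi_{\Ga'}^\theta(v)}{\phi(v)},
\end{equation*}
valid whenever $\phi > 0$ on $\L_{\Ga'}^\theta - \{0\}$ (cf.\ \cite{Q2}, \cite{KOW}). The lower bound $\liminf_n \delta_{\phi, \sigma_n(\Ga)} \geq \delta_{\phi, \Ga}$ is immediate: given $\epsilon > 0$, Lemma \ref{stp} lets me pick $v_0 \in \inte \L_\Ga^\theta$ with $\psi_\Ga^\theta(v_0)/\phi(v_0) > \delta_{\phi, \Ga} - \epsilon$; Corollary \ref{co} places $v_0$ in $\inte \L_{\sigma_n(\Ga)}^\theta$ for large $n$; and Theorem \ref{iuc} gives $\psi_{\sigma_n(\Ga)}^\theta(v_0) \to \psi_\Ga^\theta(v_0)$. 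For the upper bound, I argue by contradiction: assuming unit vectors $v_n \in \L_{\sigma_n(\Ga)}^\theta$ with $\psi_{\sigma_n(\Ga)}^\theta(v_n) > s \phi(v_n)$ for some fixed $s > \delta_{\phi, \Ga}$, I pass to a subsequence with $v_n \to v_\infty \in \L_\Ga^\theta \cap \fa_\theta^1$ using Theorem \ref{tl}. The main obstacle is to promote the pointwise convergence of Theorem \ref{iuc} on the interior to the joint upper semicontinuity
\begin{equation*}
    \limsup_n \psi_{\sigma_n(\Ga)}^\theta(v_n) \leq \psi_\Ga^\theta(v_\infty),
\end{equation*}
since $v_\infty$ may lie on the relative boundary of $\L_\Ga^\theta$. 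I plan to handle this by combining the uniform upper bound $\psi^\theta \leq p_\theta(2\rho)$ from \eqref{rrr}, the concavity and upper semicontinuity of each $\psi_{\sigma_n(\Ga)}^\theta$ provided by Lemma \ref{stp}, and the classical fact that a uniformly bounded sequence of concave functions converging pointwise on the interior of a convex cone has its $\limsup$ dominated by the upper semicontinuous extension of its pointwise limit. Together with continuity of $\phi$, this produces $\psi_\Ga^\theta(v_\infty) \geq s \phi(v_\infty)$, contradicting the variational formula and $s > \delta_{\phi, \Ga}$.

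\emph{Analyticity.} On the analytic family $D$, I would invoke the thermodynamic formalism for Anosov representations developed by Bridgeman--Canary--Labourie--Sambarino \cite{BCLS}: to each $\sigma \in D$ they associate an analytic topological pressure $P_\sigma$ on a natural space of H\"older potentials, and characterize $\delta_{\phi, \sigma(\Ga)}$ as the unique positive root of $P_\sigma(-\delta \phi) = 0$. Together with the strict negativity $\partial_\delta P_\sigma(-\delta \phi)|_{\delta = \delta_{\phi, \sigma(\Ga)}} < 0$ (from strict convexity of pressure), the implicit function theorem applied to the jointly analytic map $(\sigma, \delta) \mapsto P_\sigma(-\delta \phi)$ yields analyticity of $\sigma \mapsto \delta_{\phi, \sigma(\Ga)}$. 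The strengthening of BCLS needed here---relaxing the positivity hypothesis from $\phi > 0$ on $\fa_\theta^+$ to $\phi > 0$ on $\L_\Ga^\theta - \{0\}$---would be justified exactly as in the well-definedness step: after shrinking $D$, Theorem \ref{tl} places every $\L_{\sigma(\Ga)}^\theta - \{0\}$ inside a common open cone on which $\phi > 0$, and $\delta_{\phi, \sigma(\Ga)}$ depends only on the restriction of $\phi$ to this cone since $\mu_\theta(\sigma(\Ga))$ lies asymptotically in it; hence $\phi$ can be replaced by a linear form agreeing with it on this cone and positive on all of $\fa_\theta^+$, reducing to the setting of \cite{BCLS}.
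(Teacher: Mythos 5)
Your well-definedness step matches the paper exactly, but the other two steps have genuine problems. The paper's entire proof is a one-paragraph reduction: \cite[Proposition 8.1]{BCLS} already gives \emph{both} the continuity and the analyticity of $\sigma\mapsto \delta_{\phi,\sigma(\Ga)}$ under the stronger hypothesis $\phi>0$ on $\fa_\theta^+-\{0\}$, and that hypothesis enters the BCLS argument only to guarantee $\phi>0$ on $\L^\theta_{\sigma(\Ga)}-\{0\}$ for $\sigma$ near $\id_\Ga$ --- which Theorem \ref{tl} supplies under the weaker hypothesis $\phi>0$ on $\L_\Ga^\theta-\{0\}$. Your continuity argument instead goes through the variational formula $\delta_{\phi,\Ga'}=\sup_v \psi^\theta_{\Ga'}(v)/\phi(v)$ and Theorem \ref{iuc}. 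This is circular in the logical structure of the paper: Theorem \ref{iuc} (= Theorem \ref{uc}) is \emph{proved} using Theorem \ref{bcls} --- both the finiteness of $\delta_{\phi,\sigma_n(\Ga)}$ and the convergence $\delta_{\phi,\sigma_n(\Ga)}\to\delta_{\phi,\Ga}$ in that proof are quoted from \ref{bcls}. Unless you supply an independent proof of the continuity of growth indicators (e.g.\ Sambarino's thermodynamic argument, which is exactly what the paper is trying to avoid relying on for general $\theta$-Anosov groups), the continuity step does not stand. Note also that Theorem \ref{iuc} and Lemma \ref{stp} assume Zariski density, whereas Theorem \ref{bcls} assumes only that $\L_\Ga^\theta$ is convex.

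The final reduction in your analyticity step is also flawed: you propose to ``replace $\phi$ by a linear form agreeing with it on this cone and positive on all of $\fa_\theta^+$.'' A linear form is determined by its values on any subset spanning $\fa_\theta$; whenever the open cone containing $\L_\Ga^\theta-\{0\}$ has nonempty interior in $\fa_\theta$ (e.g.\ $\Ga$ Zariski dense), the only linear form agreeing with $\phi$ on it is $\phi$ itself, so no such replacement exists. The correct move --- and the one the paper makes --- is not to modify $\phi$ but to inspect the BCLS proof and observe that positivity of $\phi$ is only ever used on the limit cones of the nearby representations, not on all of $\fa_\theta^+$. Your first observation in that paragraph (that $\delta_{\phi,\sigma(\Ga)}$ depends only on $\phi$ restricted to a neighborhood of the cone, since $\mu_\theta(\sigma(\Ga))$ lies within bounded distance of it) is the right idea; the black-box substitution built on top of it is not.
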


\begin{proof} In \cite[Proposition 8.1]{BCLS} (see also \cite[Section 4.4]{CS}), this is obtained for any $\phi\in \fa_\theta^*$ which is positive on $\fa_\theta^+-\{0\}$, but the condition $\phi>0$ on $\fa_\theta^+-\{0\}$ was needed only to guarantee that $\phi$ is positive on $\L_{\sigma (\G)}^{\theta} $ for all $\sigma$ sufficiently close to $\id_\Ga$.  By Theorem \ref{thm:limitcone2}, if $\phi>0$ on $\L_{\Ga}^{\theta}-\{0\}$, then
$\phi>0$ on $\L_{\sigma(\Ga)}^{\theta} -\{0\}$ for all $\sigma$
sufficiently close to $\id_\Ga$ and therefore the above formulation holds.
\end{proof}

We now give a proof of Theorem \ref{uc}; the use of conformal measures in this context was inspired by the work of Sullivan \cite{Su} and McMullen \cite{Mc} where conformal measures were used to study critical exponents of Kleinian groups.

\medskip 

 \noindent{\bf Proof of Theorem \ref{uc}}
 Let $v\in \inte\L_\Ga^{\theta}$ be a unit vector. By Theorem \ref{larger},
 there exists a unique linear form $\phi=\phi_v\in \fa_\theta^*$ tangent to $\psi_\Ga^\theta$ at $v$.
The $\theta$-Anosov property of $\Ga$ implies that $\psi_\Ga^\theta$ is vertically tangent, i.e., there is no linear form tangent to $\psi_\Ga^\theta$ at a non-zero vector $v\in \partial\L_\Ga^\theta$ 
(\cite{Sam}, \cite[Theorem 12.2]{KOW}). This implies that
$\phi>0$ on $\L_{\Ga}^{\theta}-\{0\}$.
By Theorem \ref{bcls}, we have
$0<\delta_{\phi,\sigma_n(\Ga)}<\infty$ for all large $n$. This implies that
$\delta_{\phi,\sigma_n(\Ga)}\phi$ is a tangent linear form \cite[Lemma 4.5]{KOW}, in particular,
 \be\label{u1} \psi^\theta_{\sigma_n(\Ga)}\le \delta_{\phi,\sigma_n(\Ga)}\phi .\ee 
Since $\delta_{\phi,\sigma_n(\Ga)}\to \delta_{\phi,\Ga}$ by Theorem \ref{bcls}
and $\delta_{\phi,\Ga}=1$ \cite[Corollary 4.6]{KOW},
 we have
 \be\label{bdd} \limsup \psi^\theta_{\sigma_n(\Ga)}(v)\le \limsup \delta_{\phi,\sigma_n(\Ga)} \phi(v)=\phi(v)=\psi^\theta_\Ga(v).\ee

By \Cref{co}, $$v\in \inte \L_{\sigma_n(\Ga)}^{\theta}$$
for all large $n$.
Therefore, by Theorem \ref{larger}, there exists a linear form $\phi_n\in \fa_\theta^*$
tangent to  $ \psi_{\sigma_n(\Ga)}^{\theta}$ at $v$
and a $(\sigma_n(\Ga), \phi_n)$-conformal probability measure, say $\nu_n$, on the limit set
$\La_{\sigma_n(\Ga)}^\theta\subset \F_{\theta}$.

We claim that by passing to a subsequence, $\nu_n$ weakly converges to a $\G$-conformal measure on $\La_\Ga^\theta$. To see this,
fix a closed cone $\cal D$ such that $v\in \inte\cal D\subset
\cal D \subset \inte\L_\Ga^\theta\cup\{0\}$. By Corollary \ref{co}, $\cal D\subset \inte\L_{\sigma_n(\Ga)}^\theta\cup\{0\}$ for all large enough $n\ge 1$.
Since $\psi_{\sigma_n(\Ga)}^\theta\ge 0 $ on $\L^\theta_{\sigma_n(\Ga)}$, we have that for all large $n$,
$$\phi_n \ge 0\quad\text{ on $\cal D$.  } $$
Moreover, since $\cal D$ has non-empty interior and $ \phi_n(v) = \psi_{\sigma_n(\Ga)}^{\theta}(v)\le \psi_\Ga^\theta(v)$ for all large $n$ by \eqref{bdd}, it follows that
the sequence
$\phi_n$ converges, up to passing to a subsequence, to
some linear form, say $\phi\in \fa_\theta^*$, which can possibly be the zero linear form at this point.  Let $\nu$ be a weak-limit of the sequence $\nu_n$. Since 
 $\sigma_n \to \id_\Ga$, it follows from \eqref{conf} that
$\nu$ is a $(\Ga, \phi)$-conformal measure  on  $\F_{\theta}$. 
Since $\La_{\sigma_n(\Ga)}^\theta$ converges to $\La_{\Ga}^\theta$ as $n\to \infty$ by Theorem \ref{lc},  $\nu$ is indeed supported on $\La_\Ga^\theta$. This proves the claim.

Therefore by Theorem \ref{larger}, we have $$\phi \ge \psi_\Ga^{\theta};$$
for instance, $\phi$ cannot be zero.
Hence
\be\label{f} \psi_\Ga^{\theta}(v)  \le \phi(v)=\lim \phi_n(v)=\lim \psi^\theta_{\sigma_n(\Ga)}  (v) .\ee 

Together with \eqref{bdd}, this proves 
$$\psi_\Ga^{\theta}(v)  =\lim \psi^\theta_{\sigma_n(\Ga)}  (v). $$
Finally, for any compact subset $Z\subset \inte \L_\G^\theta$,
 we need to show that as $n\to \infty$,
\be\label{uni} \psi^\theta_{\sigma_n(\Ga)}\vert_Z\to \psi^\theta_\Ga\vert_Z \quad\text{uniformly on } Z.
   \ee 

By \Cref{co}, we have $Z\subset \inte \L_{\sigma_n(\G)}^\theta$ 
   for all sufficiently large $n\ge 1$.
   Since $\inte\L^\theta_\Ga$ is a convex subset,
  any compact subset $Z\subset \inte \L^\theta_\Ga$ can be covered by finitely many compact convex subsets of $\inte \L^\theta_\Ga$.
Hence we may assume without loss of generality that $Z$ is a compact convex subset.

Recall that each $\psi^\theta_{\sigma_n(\Gamma)} $ is continuous on $\inte_{\sigma_n(\Ga)}^\theta$ (see the remark following Lemma \ref{stp}), and hence
we may assume that $\psi^\theta_{\sigma_n(\Gamma)}\vert_{Z} $
is continuous for all $n\ge 1$.
We claim that the family $\cal E_Z\coloneqq \{ \psi^\theta_{\sigma_n(\Gamma)}\vert_{Z} : n \in \mathbb{N} \} $ is equicontinuous.
Observe that $\cal E_Z$ consists of  positive concave continuous functions. Moreover, 
by \cite[Lemma 3.13]{KOW}, 
\[\psi_{\sigma_n(\Ga)}^\theta(u) \le \max_{v\in p_\theta^{-1}(u)}\psi_{\sigma_n(\Ga)}(v)\quad\text{for all }u\in\fa^+_\theta.\]
Since $\psi_{\sigma_n(\Ga)}\le 2\rho$ by \eqref{rrr}, we have that
\be\label{GIbound} \max_{u\in Z}\psi_{\sigma_n(\Ga)}^\theta(u)
\le \max_{v\in p_\theta^{-1}(Z)\cap \L_{\sigma_n(\G)}} 2\rho(v).\ee

Consider $\theta\cup\i(\theta)$-admissible closed cones $\cal D\subset \cal C$
such that $\L_{\G}-\{0\}\subset \inte \cal D$ and $\cal D-\{0\} \subset \inte \C$, which exist by
Lemma \ref{theta}. By \Cref{ucp}, we have that $\L_{\sigma_n(\G)} \subset \C$ for all large $n\ge 1$.  Since $\C$ is $\theta\cup\i(\theta)$-admissible and hence $\cal C\cap \ker\alpha=\{0\}$ for all $\alpha\in \theta\cup\i(\theta)$, the intersection $p^{-1}_\theta(Z) \cap \C$ is compact.
Thus the right side of \eqref{GIbound} is uniformly bounded by 
\[
\max_{v\in p_\theta^{-1}(Z)\cap \C} 2\rho(v) <\infty.
\]
Hence, $\cal E_Z$ is a uniformly bounded family of concave functions.
Since this is true for an arbitrary compact convex subset $Z\subset \inte\L_\G^\theta$, it follows that the functions in $\cal E_Z$ are uniformly Lipschitz, which implies that they are equicontinuous.

By the Arzelà--Ascoli theorem, any sequence from the family $ \cal E_Z $ has a subsequence that converges uniformly. Since the growth indicator functions $ \psi^\theta_{\sigma_n(\Gamma)}|_Z $ converge pointwise to $ \psi_\Gamma^\theta|_Z$, any uniform limit of $\cal E_Z$ must be $ \psi_\Gamma^\theta \vert_Z$. 
This proves \eqref{uni}, completing the proof of  Theorem \ref{uc}.

\medskip 

 \noindent{\bf Proof of Corollary \ref{us}:} Let $u_\Ga^\theta\in \fa_\theta$ be
 the unique unit vector defined by
\be\label{eqn:max}
\delta^\theta_\Ga= \psi^\theta_\Ga (u_\Ga^{\theta})=\max_{\|u\|=1, u\in \fa_\theta^+}\psi^\theta_\Ga (u) .
\ee

As $\Ga$ is $\theta$-Anosov, we have that $\psi_\Ga^\theta$ is strictly concave and vertically tangent \cite[Theorem 12.2]{KOW}. It follows that
 $u_{\Ga}^{\theta}\in \inte \L_\Ga^{\theta}. $

Choose a compact neighborhood $B \subset  \inte\L_{\G}^{\theta} \cap \fa^1_\theta$ of $u_{\Ga}^\theta$.
Let $\sigma_n$ be a sequence converging to $\id_\Ga$ in $\Hom(\Ga, G)$.
By \Cref{co}, we have
\[
 B \subset \inte\L_{\sigma_n(\G)}^{\theta} \cap \fa^1_\theta \quad \text {for all sufficiently large }n\ge 1.
\]

Let  $u_n\in B$ be the unit vector such that
$\psi_{\sigma_n(\Ga)}^{\theta}(u_n)$ attains the maximum of $\psi_{\sigma_n(\Ga)}^{\theta}$ on $B$. We claim that
 \be\label{un} u_n = u^\theta_{\sigma_n(\G)}, \ee  that is, 
$\psi_{\sigma_n(\Ga)}^{\theta}(u_n)=\max_{v\in \fa^1_\theta} \psi_{\sigma_n(\Ga)}^\theta(v)$.

By definition of $u_{\Ga}^{\theta}$, there exist $\eta>0$ such that 
 $$\psi_{\Ga}^{\theta} (u_{\Ga}^{\theta}) \ge \eta + \max_{v\in \partial B} \psi_{\Ga}^{\theta}(v) .$$
Since $\psi_{\sigma_n(\Ga)}^{\theta}$ converges uniformly to $ \psi_{\Ga}^{\theta}$ on $B$ by \Cref{thm:uc}, we have, for all sufficiently large enough $n\ge 1$,
 \be\label{ineq} \psi_{\sigma_n(\Ga)}^{\theta} (u_{\Ga}^{ \theta})  \ge \frac{\eta}{2} + \max_{v\in \partial B} \psi_{\sigma_n(\Ga)}^{\theta}(v).\ee 
Since  $\psi_{\sigma_n(\Ga)}^{\theta} (u_n) \ge  \psi_{\sigma_n(\Ga)}^{\theta} (u_{\Ga}^{ \theta})$,  it follows from \eqref{ineq} that $u_n$ lies in
in the interior of $B$ for all large $n \ge 1$. 
Now suppose that  $u_n \ne  u^\theta_{\sigma_n(\G)}$. This means that $u^\theta_{\sigma_n(\G)} \notin B$.
Noting that both $u_n$ and $u^\theta_{\sigma_n(\Ga)}$ lie in $\inte\L^\theta_\Ga$, which is convex,
consider the line segment in $ \inte \L_\Gamma^\theta $ connecting $ u_n $ to $ u^\theta_{\sigma_n(\G)} $:
\[
t \mapsto c(t) \coloneqq t u_n + (1-t) u^\theta_{\sigma_n(\G)} \quad \text{for } t \in [0,1].
\]
Note that $0<\| c(t) \| < 1 $ for all $ t\in (0,1) $.
Therefore, by the concavity and homogeneity of $ \psi_{\sigma_n(\Ga)}^{\theta} $, we have that for all $ t \in (0,1) $,
\begin{align*} 
\psi_{\sigma_n(\Ga)}^{\theta} \left(\frac{c(t)}{\| c(t) \|}\right)
&= \frac{1}{\| c(t) \|} \psi_{\sigma_n(\Ga)}^{\theta} (c(t)) \\
&> \psi_{\sigma_n(\Ga)}^{\theta} (c(t)) \\
&\ge t\psi_{\sigma_n(\Ga)}^{\theta}(u_n) + (1-t) \psi_{\sigma_n(\Ga)}^{\theta}(u^\theta_{\sigma_n(\G)}) \\
&\ge \min \left\{ \psi_{\sigma_n(\Ga)}^{\theta}(u_n), \, \psi_{\sigma_n(\Ga)}^{\theta}(u^\theta_{\sigma_n(\G)}) \right\} \\
&= \psi_{\sigma_n(\Ga)}^{\theta}(u_n).
\end{align*}

Since $\frac{c(t)}{\| c(t) \|}\to u_n$ as $t\to 1$, this is a 
contradiction to the fact that $ u_n $ attains the maximum of $ \psi_{\sigma_n(\Ga)}^{\theta} $ on $B$. Therefore, $u^\theta_{\sigma_n(\G)} \in B$, proving the claim \eqref{un}.

Since $B$ is an arbitrary compact neighborhood of $u_\Ga^\theta$, it follows from \eqref{un} that
\be\label{fin1}
u_n=u^\theta_{\sigma_n(\G)} \to u^\theta_{\G} \quad \text{as } n\to\infty.
\ee 

Since $\psi_{\sigma_n(\Ga)}^\theta\to \psi_\Ga^\theta$ uniformly on $B$,
by  \Cref{thm:uc}, we obtain from \eqref{fin1} that 
$$\delta_{\sigma_n(\Ga)}^{\theta} \to \delta_{\Ga}^{\theta}\quad\text{ as $n\to\infty$.}$$ This finishes the proof.

\section{Continuity of more general critical exponents}
If $\Ga$ is a discrete subgroup of $G$ and $\phi\in \fa^*$ is positive on $\L_\Ga-\{0\}$, then 
the $\phi$-critical exponent 
$\delta_{\phi, \Ga}$ of $\Ga$  is well-defined as in \eqref{ct}.

In this section, we obtain the following continuity of $\phi$-critical exponents as an
application of Theorem \ref{ucp}. The main point of this theorem is that we are not assuming that the linear form $\phi$ is $p_\theta$-invariant unlike in Theorem \ref{bcls}.

\begin{theorem}[\Cref{m2}]\label{m22}
    Let $\Ga$ be a $\theta$-Anosov subgroup of $G$  with $\L_\Ga \subset \fa_\theta^+$ convex. Let $\phi\in \fa^*$ be such that $\phi>0$ on $\L_\Ga -\{0\}$. Then for all $\sigma\in \Hom(\Ga, G)$ sufficiently close to $\id_\Ga$,
    the critical exponent $0<\delta_{\phi, \sigma(\Ga)}<\infty$ is well-defined and 
    the map $\sigma\mapsto 
    \delta_{\phi, \sigma(\Ga)}$ is continuous at  $\id_\Ga \in \op{Hom}(\Ga, G)$.
\end{theorem}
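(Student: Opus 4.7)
The plan is to reduce Theorem~\ref{m22} to the $p_\theta$-invariant case covered by Theorem~\ref{bcls}, by approximating $\phi$ by its $p_\theta$-invariant companion. Set $\tilde\phi := \phi \circ p_\theta \in \fa_\theta^*$, viewed as an element of $\fa^*$. Since $\L_\Ga \subset \fa_\theta^+$ and $p_\theta$ is the identity on $\fa_\theta$, the two forms agree on $\L_\Ga$; in particular $\tilde\phi > 0$ on $\L_\Ga^\theta - \{0\} = \L_\Ga - \{0\}$, so by Theorem~\ref{bcls} the map $\sigma \mapsto \delta_{\tilde\phi, \sigma(\Ga)}$ is well-defined, positive, finite, and continuous in a neighborhood of $\id_\Ga$.

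I would then establish a local ratio estimate: given $\epsilon > 0$, there exists an open cone $\cal U \subset \fa^+$ containing $\L_\Ga - \{0\}$ on which
\[(1-\epsilon)\tilde\phi(v) \;\le\; \phi(v) \;\le\; (1+\epsilon)\tilde\phi(v).\]
This is a compactness argument on $\fa^1$: $\tilde\phi$ is bounded below by some $c > 0$ on the compact set $\L_\Ga \cap \fa^1$, and for a unit vector $v \in \fa^+$ within distance $\delta$ of $\L_\Ga$ one has $\|v - p_\theta(v)\| \le 2\delta$ (since $\L_\Ga \subset \fa_\theta$ means $w - p_\theta(w) = 0$ for $w \in \L_\Ga$), hence $|\phi(v) - \tilde\phi(v)| \le 2\delta\|\phi\|$, giving the desired ratio bound for small $\delta$.

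The second ingredient is uniform asymptotic containment of $\mu(\sigma(\Ga))$ in $\cal U$. I first check that $\L_\Ga$ is itself $(\theta \cup \i(\theta))$-admissible: it is $\i$-invariant, lies in the subspace $\fa_\theta \subset \fa_{\theta \cup \i(\theta)}$ which $\W_{\theta \cup \i(\theta)}$ fixes pointwise (so $\W_{\theta \cup \i(\theta)}\L_\Ga = \L_\Ga$ is convex by hypothesis), and satisfies $\L_\Ga \cap \ker\alpha = \{0\}$ for all $\alpha \in \theta \cup \i(\theta)$ by the $\theta$-Anosov property (see \eqref{kan}) together with $\i$-invariance. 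Applying Proposition~\ref{ucp} with $\cal D = \L_\Ga$ and open cone $\cal U$ produces a neighborhood $\cal O$ of $\id_\Ga$ and a uniform constant $Q$ such that, for every $\sigma \in \cal O$ and every $\ga \in \Ga$, $\mu(\sigma(\ga))$ lies within distance $Q$ of a fixed closed cone $\cal C' \subset \cal U$. Since $\sigma(\Ga)$ remains $\theta$-Anosov (the Anosov condition is open, hence quasi-isometrically embedded), $\|\mu(\sigma(\ga))\| \to \infty$ as $|\ga| \to \infty$, so $\mu(\sigma(\ga)) \in \cal U$ for all but finitely many $\ga$.

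Putting these ingredients together, for $\sigma \in \cal O$ and all but finitely many $\ga \in \Ga$,
\[(1-\epsilon)\tilde\phi(\mu(\sigma(\ga))) \;\le\; \phi(\mu(\sigma(\ga))) \;\le\; (1+\epsilon)\tilde\phi(\mu(\sigma(\ga))),\]
which translates into the critical-exponent sandwich
\[\tfrac{1}{1+\epsilon}\,\delta_{\tilde\phi, \sigma(\Ga)} \;\le\; \delta_{\phi, \sigma(\Ga)} \;\le\; \tfrac{1}{1-\epsilon}\,\delta_{\tilde\phi, \sigma(\Ga)}.\]
Applied at $\sigma = \id_\Ga$ and letting $\epsilon \to 0$, this yields $\delta_{\phi, \Ga} = \delta_{\tilde\phi, \Ga}$, which is positive and finite; taking $\sigma \to \id_\Ga$ first and then $\epsilon \to 0$, combined with continuity of $\sigma \mapsto \delta_{\tilde\phi, \sigma(\Ga)}$ from Theorem~\ref{bcls}, gives the claimed continuity of $\delta_{\phi, \sigma(\Ga)}$ at $\id_\Ga$ together with its positivity and finiteness. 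The main technical point is the uniform containment in the third paragraph: upper semicontinuity of limit cones alone only controls asymptotic directions of $\sigma(\Ga)$, and one genuinely needs the quantitative $Q$-neighborhood bound from inside the proof of Proposition~\ref{ucp}, which applies to individual Cartan projections.
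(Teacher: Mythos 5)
Your proposal is correct and follows essentially the same route as the paper: replace $\phi$ by $\phi':=\phi\circ p_\theta$, invoke Theorem~\ref{bcls} for the $p_\theta$-invariant form, establish the $(1\pm\epsilon)$ comparison between $\phi$ and $\phi'$ on a conical neighborhood of $\L_\Ga$, and use upper semicontinuity of limit cones (the paper cites Theorem~\ref{UC}, you go through Proposition~\ref{ucp} after checking that $\L_\Ga$ is $(\theta\cup\i(\theta))$-admissible, which amounts to the same thing) to place all but finitely many $\mu(\sigma(\ga))$ in that neighborhood before comparing the two Poincar\'e series. The only cosmetic difference is that you extract the quantitative $Q$-neighborhood bound from inside the proof of Proposition~\ref{ucp}, whereas the paper works at the level of the statement of Theorem~\ref{UC}; both yield the same sandwich on critical exponents.
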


\begin{proof}
Let $\phi\in \fa^*$ be such that $\phi> 0$ on $\L_\Ga-\{0\}$.
Set $$\phi'\coloneqq\phi \circ p_\theta,$$
so that $\phi'\in \fa_\theta^*$. Since
$\phi'=\phi$ on $\fa_\theta$ and $\L_\Ga=\L_\Ga^\theta$, we have
$\phi'> 0$ on $\L_\Ga-\{0\}$. Fix a small open neighborhood $\cal O'$ of $\id_\G$ in $\Hom(\Ga,G)$ so that  $$c\coloneqq\sup_{\sigma\in \cal O'} \delta_{\phi', \sigma(\Ga)} <\infty $$
by Theorem \ref{bcls}.
For $\eta>0$, set $ {\mathcal N}_\eta(\L_\Ga)\coloneqq\{ v\in \fa^+: \|v-\L_\Ga \| \le \eta\|v\|\}.$

Since $\phi>0$ on $\L_\Ga-\{0\}$,
there exists $\eta>0$ such that $\phi$ is positive on
 ${\mathcal N}_\eta(\L_\Ga)-\{0\}$. Hence $\phi$ and the norm
 $\|\cdot \|$ are comparable to each other  on
 ${\mathcal N}_\eta(\L_\Ga)$.
Let $\e>0$ be arbitrary.
By making $\eta$ smaller if necessary, we may assume that
for all $v\in {\mathcal N}_\eta(\L_\Ga)$, we have
\be\label{e2} -\e\phi(v) \le  (\phi - \phi')(v)  \le \e \phi(v).\ee 

We can take a small neighborhood $\cal O\subset \cal O'$ of $\id_\Ga$ in $\Hom(\Ga, G)$ so that for any $\sigma\in \cal O$, $\sigma$ is $\theta$-Anosov and 
\be\label{e1} \L_{\sigma(\Ga)}\subset {\mathcal N}_\eta (\L_\Ga)\ee 
by Theorem \ref{UC}.
In particular, there exists a finite subset $F_\sigma\subset \G$ such that 
$$\mu(\sigma(\Ga-F_\sigma ))\subset  {\mathcal N}_\eta (\L_\Ga).$$

Since  $\lim \delta_{\phi', \sigma(\Ga_0)} = \delta_{\phi', \Ga_0}$ as $\sigma\to \id_\Ga$ by Theorem \ref{bcls}, 
we may assume that for all $\sigma\in \cal O$, we have
\be\label{e3} |\delta_{\phi',
\sigma(\Ga)}-\delta_{\phi', \Ga}|\le \e\ee 
 by replacing $\cal O$  by a smaller neighborhood if necessary. Then for any $\sigma\in \cal O$, we have that for all $s>0$,
$$\sum_{\ga\in \Ga-F_\sigma } e^{-(1-\epsilon) s\phi (\mu(\sigma(\ga)))} \ge \sum_{\ga\in \Ga-F_\sigma} e^{- s\phi'(\mu(\sigma(\ga)))}.  $$
It follows that
$$\delta_{(1-\epsilon) \phi,\sigma(\Ga) } \ge  \delta_{\phi', \sigma(\Ga)} \text{ and hence }
\delta_{\phi,\sigma(\Ga)}  \ge (1-\epsilon)  \delta_{\phi',\sigma(\Ga)}. $$
Similarly, we have that for any $\sigma\in \cal O$,
$$\sum_{\ga\in \Ga-F_\sigma } e^{-(1+\epsilon) s\phi(\mu(\sigma(\ga)))} \le \sum_{\ga\in \Ga-F_\sigma} e^{- s \phi'(\mu(\sigma(\ga)))} . $$
Therefore $$\delta_{(1+\epsilon) \phi,\sigma(\Ga) } \le  \delta_{\phi', \sigma(\Ga)}\text{ and hence }
\delta_{\phi,\sigma(\Ga)}  \le (1+\epsilon)  \delta_{\phi',\sigma(\Ga)}. $$

Therefore for all $\sigma\in \cal O$,
$$ (1-\e) \delta_{\phi', \sigma(\Ga)} \le \delta_{\phi,
\sigma(\Ga)} \le (1+\e) \delta_{\phi',\sigma(\Ga)}.$$

It follows from \eqref{e3} that for all $\sigma\in \cal O$,
	$$ | \delta_{\phi,
\sigma(\Ga)}-\delta_{\phi, \Ga}| \le 2\e \delta_{\phi', \sigma(\Ga)}  \le 2c \e .$$
This finishes the proof.
\end{proof}

\noindent{\bf Proof of Corollary \ref{coo}:} Since $\phi>0$ on $\L_\Ga-\{0\}$, we have that $\phi>0$ on $\L_{\sigma_n(\Ga)}-\{0\}$
for all sufficiently large $n$ by Theorem \ref{UC}.
It follows from \cite{KMO} that $\delta_{\phi, \sigma_n(\Ga)}$ is the minimum constant $\kappa_n\ge 0$ such that
 $$\psi_{\sigma_n(\Ga)}\le \kappa_{n}\phi. $$
Therefore $\kappa \ge \delta_{\phi, \sigma_n(\Ga)}$.
By Theorem \ref{m2}, we have
$$\kappa \ge \delta_{\phi, \Ga} .$$
 This implies that
 $\psi_\Ga \le \kappa \phi$, as desired.

\end{document}